\documentclass[12pt, leqno]{amsart}

\usepackage{amssymb}

\usepackage[all]{xy}

\theoremstyle{plain}
\newtheorem*{thm}{Theorem}
\newtheorem{theorem}{Theorem}[section]
\newtheorem{lemma}[theorem]{Lemma}
\newtheorem{proposition}[theorem]{Proposition}
\newtheorem{corollary}[theorem]{Corollary}

\theoremstyle{definition}

\newtheorem{remark}[theorem]{Remark}

\newtheorem{examples}[theorem]{Examples}

\textwidth = 1.1 \textwidth

\newcommand\bG{{\mathbb G}}

\newcommand\bQ{{\mathbb Q}}

\newcommand\bZ{{\mathbb Z}}

\newcommand\cA{{\mathcal A}}
\newcommand\cB{{\mathcal B}}
\newcommand\cC{{\mathcal C}}

\newcommand\cE{{\mathcal E}}
\newcommand\cF{{\mathcal F}}

\newcommand\cI{{\mathcal I}}

\newcommand\cL{{\mathcal L}}

\newcommand\cO{{\mathcal O}}
\newcommand\cP{{\mathcal P}}

\newcommand\cV{{\mathcal V}}

\newcommand\tA{{\widetilde A}}

\newcommand\tG{{\widetilde G}}

\newcommand\tP{{\widetilde P}}

\newcommand\tX{{\widetilde X}}
\newcommand\tW{{\widetilde W}}

\newcommand\ucA{\underline{\mathcal A}}

\newcommand\wA{\widehat{A}}

\newcommand\charac{{\rm char}}

\newcommand\cont{{\rm cont}}

\newcommand\id{{\rm id}}

\renewcommand\mod{{\rm mod}}

\newcommand\R{{\rm R}}

\newcommand\Coker{{\rm Coker}}

\newcommand\End{{\rm End}}
\newcommand\Ext{{\rm Ext}}

\newcommand\Gal{{\rm Gal}}

\newcommand\Hom{{\rm Hom}}
\renewcommand\Im{{\rm Im}}

\newcommand\Ker{{\rm Ker}}

\newcommand\Mod{{\rm Mod}}

\newcommand\Pro{{\rm Pro}}

\numberwithin{equation}{section}

\title[Fundamental groups of algebraic groups]
{On the fundamental groups of commutative algebraic groups}

\author{Michel Brion}

\date{}

\begin{document}

\begin{abstract}
Consider the abelian category $\cC$ of commutative group schemes 
of finite type over a field $k$, its full subcategory $\cF$ 
of finite group schemes, and the associated pro-category $\Pro(\cC)$ 
(resp.~$\Pro(\cF)$) of pro-algebraic (resp.~profinite) group schemes. 
When $k$ is perfect, we show that the profinite fundamental group 
$\varpi_1 : \Pro(\cC) \to \Pro(\cF)$ 
is left exact and commutes with base change under algebraic field 
extensions; as a consequence, the higher profinite homotopy functors 
$\varpi_i$ vanish for $i \geq 2$. Along the way, we describe the 
indecomposable projective objects of $\Pro(\cC)$ over an arbitrary 
field $k$.
\end{abstract}

\maketitle

%\tableofcontents

\section{Introduction}
\label{sec:int}

Every real Lie group $G$ gives rise to two exact sequences
\[ 0 \to G^0 \to G \to \pi_0(G) \to 0, \quad 
0 \to \pi_1(G) \to \tG \to G^0 \to 0, \]
where $G^0$ denotes the identity component, $\tG$ its universal 
cover, and $\pi_0(G)$, $\pi_1(G)$ are discrete groups; moreover,
the second homotopy group $\pi_2(G)$ vanishes. 
This classical result has a remarkable analogue for commutative 
algebraic groups over an algebraically closed field $k$, 
as shown by Serre and Oort via a categorical approach 
(see \cite{Se60,Oort}). More specifically, 
consider the category $\cC$ of commutative $k$-group schemes 
of finite type, and the full subcategory $\cF$ of finite group 
schemes; then $\cC$ is an artinian abelian category, and $\cF$ 
is a Serre subcategory. Let $\Pro(\cC)$ (resp.~$\Pro(\cF)$) 
denote the associated pro-category, consisting of pro-algebraic 
(resp.~profinite) group schemes; recall that these categories 
have enough projectives, and $\cC$ (resp.~$\cF$) is equivalent 
to the full subcategory of $\Pro(\cC)$ (resp.~$\Pro(\cF)$) consisting
of artinian objects. Assigning to each object of $\Pro(\cC)$
its largest profinite quotient yields a right exact functor 
\[ \varpi_0  : \Pro(\cC) \longrightarrow  \Pro(\cF). \] 
It turns out that the left derived functors, 
\[ \varpi_i  := L^i \varpi_0: \Pro(\cC) \longrightarrow \Pro(\cF), \] 
vanish for $i \geq 2$; equivalently, $\varpi_1$ is left exact. 
Moreover, $\varpi_0, \varpi_1$ fit in an exact sequence 
\[ 0 \longrightarrow \varpi_1(G) \longrightarrow \tG \longrightarrow
G \longrightarrow \varpi_0(G) \longrightarrow 0 \] 
for any $G \in \Pro(\cC)$ (see \cite[6.2, 10.2]{Se60} 
when $k$ has characteristic $0$, and \cite[II.7, II.14]{Oort} 
in positive characteristics).

The construction of the ``profinite homotopy functors''
$\varpi_i$ makes sense over an arbitrary field $k$; it is easy 
to extend the above exact sequence to this setting. 
The main result of this paper generalizes those of Serre and 
Oort as follows:

\begin{thm}\label{thm:funda}
When $k$ is perfect, the functor
$\varpi_1 : \Pro(\cC) \to \Pro(\cF)$ 
is left exact and commutes with base change under 
algebraic field extensions. As a consequence, the higher 
profinite homotopy functors $\varpi_i$ vanish for $i \geq 2$.
\end{thm}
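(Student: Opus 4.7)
The plan is to reduce the theorem to its algebraically closed case, due to Serre and Oort, via a base change argument. Specifically, I would first prove that $\varpi_1$ commutes with base change along the extension $k \subseteq \kb$; granting this, left exactness of $\varpi_1$ over $k$ follows by combining the Serre--Oort result over $\kb$ with the exactness and faithfulness of base change on $\Pro(\cC)$. Indeed, given an exact sequence $0 \to G' \to G \to G''\to 0$ in $\Pro(\cC)$, base change yields the corresponding exact sequence over $\kb$; Serre--Oort produces an injection $\varpi_1(\Gb') \hookrightarrow \varpi_1(\Gb)$; and base change compatibility together with faithfulness forces $\varpi_1(G') \to \varpi_1(G)$ to be injective as well.

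The vanishing $\varpi_i = 0$ for $i \geq 2$ is then a formal consequence of the left exactness of $\varpi_1$ by dimension shifting. For any $G \in \Pro(\cC)$, pick a presentation $0 \to K \to P \to G \to 0$ with $P$ projective. Left exactness of $\varpi_1$ applied to this sequence combined with $\varpi_1(P) = 0$ forces $\varpi_1(K) = 0$, while the long exact sequence gives $\varpi_{i+1}(G) \cong \varpi_i(K)$ for all $i \geq 1$; induction then kills all $\varpi_i$ with $i \geq 2$.

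The heart of the argument is thus the base change compatibility of $\varpi_1$. My plan is to compute $\varpi_i$ via projective resolutions $P_\bullet \to G$ and verify two points: (i) that $\varpi_0$ itself commutes with base change, and (ii) that base change sends projective resolutions in $\Pro(\cC)$ to projective resolutions in $\Pro(\bar{\cC})$. Point (i) is comparatively direct: $\varpi_0(G)$ is the largest profinite quotient of $G$, and for perfect $k$ the category of finite commutative $k$-group schemes is obtained from its counterpart over $\kb$ by Galois descent, so the formation of the largest profinite quotient is plainly compatible with base change.

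The main obstacle is point (ii), namely showing that base change along $k \subseteq \kb$ preserves projective objects of $\Pro(\cC)$. This is where the description of the indecomposable projectives of $\Pro(\cC)$ announced in the abstract is decisive: armed with a concrete classification (universal covers of simple abelian varieties, of tori, of unipotent groups of various types, together with their characteristic-dependent variants), one verifies case by case that each indecomposable projective base changes to a product of indecomposable projectives over $\kb$. The perfectness of $k$ ensures that the relevant Galois descent machinery is available, and the Galois-equivariance of the specific constructions involved (universal cover of an abelian variety, character lattice of a torus, pro-Witt scheme for unipotents, etc.) should make this verification tractable.
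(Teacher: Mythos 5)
Your overall strategy is sound and genuinely different from the paper's. The paper proves left exactness of $\varpi_1$ over a perfect field $k$ \emph{directly} (Theorem \ref{thm:perfect}): by Chevalley's theorem every $G \in \cC$ is an extension of an abelian variety by a linear group, the abelian variety case is handled by a divisibility argument (Lemma \ref{lem:divisible}), and the linear case is reduced via Remark \ref{rem:comp} to Demazure--Gabriel's result for affine group schemes; base change compatibility is then proved separately (Proposition \ref{prop:separable}). You instead propose to prove base change compatibility along $k \subseteq \kb$ first and import left exactness from the Serre--Oort theorem over $\kb$. The formal parts of your reduction (exactness and faithfulness of $\otimes_k \kb$ on $\Pro(\cC)$, the dimension-shifting argument killing $\varpi_i$ for $i \geq 2$) are correct and match Lemma \ref{lem:vanishing}. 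Note, though, that the theorem asserts commutation with \emph{all} algebraic extensions $k'/k$, not just $\kb/k$, so you would still owe the general case.

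The genuine gap is in your plan for the decisive step, namely that $\otimes_k \kb$ preserves projectives of $\Pro(\cC)$ and that $\varpi_0$ commutes with base change. Your case-by-case verification via the classification of indecomposable projectives is circular at its hardest points: to identify $P(A)_{\kb}$ with $P(A_{\kb})$, or the base change of the projective cover of a simple finite group scheme with a projective object of $\Pro(\cF_{\kb})$, you essentially need the very statement you are trying to prove (or an independent argument you have not supplied). ``Galois descent'' does not do this work by itself: projectivity is a lifting property, and a lift constructed over $\kb$ need not be Galois-equivariant; likewise, for $\varpi_0$ one must rule out morphisms from $(G^{\cF})_{\kb}$ to finite $\kb$-group schemes that are not defined over $k$. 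The paper's fix is a direct adjunction argument via Weil restriction: descend the data to a finite subextension $K/k$, use $\Hom_{\cC_K}((P_i)_K, H_K) = \Hom_{\cC_k}(P_i, \R_{K/k}(H_K))$ together with the fact that $\R_{K/k}$ preserves epimorphisms for $K/k$ finite separable and sends finite group schemes to finite group schemes (Lemma \ref{lem:change} and the proof of Proposition \ref{prop:separable}). This handles all projectives uniformly and makes the classification unnecessary for this purpose. The paper even warns (Remark \ref{rem:change}) that Milne's published sketch of exactly this preservation-of-projectives claim is flawed, so this is precisely the point where hand-waving is not safe; you should replace your case analysis with a Weil restriction argument of this kind.
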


Our approach is independent of the
general theory of \'etale homotopy groups of schemes 
(see e.g. \cite{AM, Friedlander}). We rather develop an
ad hoc theory of homotopy groups in the setting of pairs
$(\cA,\cB)$, where $\cA$ is an artinian abelian category,
and $\cB$ a Serre subcategory of $\cA$. For this, we
build on constructions and results of Gabriel (see 
\cite[Chap.~III]{Gabriel}) and on further developments
in \cite{Br17b}, recalled in Subsection 
\ref{subsec:pro-artinian}. These may be conveniently
formulated in terms of orthogonal or perpendicular 
categories (see \cite[II.2]{BR} and \cite{GL} for these two
notions). Homotopy groups are introduced in Subsection
\ref{subsec:homotopy}, which generalizes results of 
Demazure and Gabriel on the profinite homotopy groups
of affine group schemes (see \cite[V.3.3]{DG}). Subsection
\ref{subsec:compatibility} investigates compatibility
properties of homotopy groups in the presence of a Serre
subcategory $\cC$ of $\cB$.

In Section \ref{sec:fundamental}, we first apply this
formalism to the category $\cC$ of (commutative)
algebraic groups, and its full subcategory $\cL$ of linear 
algebraic groups, over an arbitrary field $k$; then $\Pro(\cL)$
is equivalent to the category of affine $k$-group schemes,
in view of \cite[V.2.2.2]{DG}. 
The resulting homotopy functor $\pi_1^{\cC,\cL}$ turns out 
to be left exact (Proposition \ref{prop:affine}). 
We then consider the pair $(\cC,\cF)$, 
and obtain the left exactness of $\varpi_1 = \pi_1^{\cC,\cF}$
when $k$ is perfect; in addition, we show that the profinite 
universal cover $\tG$ has homological dimension at most $1$ 
for any $G \in \Pro(\cC)$ (Theorem \ref{thm:perfect}).

When $G$ is an abelian variety over an arbitrary field $k$, 
we construct a minimal projective resolution of $\tG$ 
(Theorem \ref{thm:P(A)}). We also describe the projective objects
of $\Pro(\cC)$ (Proposition \ref{prop:projind}); for this, we use 
results of Demazure and Gabriel on the projectives of $\Pro(\cL)$ 
over a perfect field (see \cite[V.3.7]{DG}), combined with
properties of the isogeny category $\cC/\cF$ (see
\cite{Br17a}). We then show that the profinite homotopy 
functors commute with base change under separable 
algebraic field extensions (Proposition \ref{prop:separable}), 
thereby completing the proof of the main result.

As an application of the above developments, we 
obtain a spectral sequence \`a la Milne (see \cite{Milne}), 
which relates the extension groups in $\cC$ and in the 
corresponding category over a Galois extension of $k$.
Further applications, to the structure of homogeneous
vector bundles over abelian varieties, are presented in
\cite{Br18}.

When the ground field $k$ has characteristic $p > 0$, 
the prime-to-$p$ part $\varpi_1^{(p')}$ of the profinite 
fundamental group commutes with 
arbitrary field extensions, and hence is left exact 
(Proposition \ref{prop:p'}). But over an imperfect field $k$, 
the functors $\varpi_0,\varpi_1$ do not commute with 
purely inseparable field extensions, nor does the pro-\'etale
$p$-primary part of $\varpi_1$ (see Examples 
\ref{ex:inseparable}). In this setting, it seems very likely 
that $\varpi_2$ is nontrivial, but we have no explicit example 
for this; also, the profinite fundamental group scheme
$\varpi_1$ deserves further investigation, already for 
smooth connected unipotent groups.

Finally, it would be interesting to relate the above (affine, 
profinite or pro-\'etale) fundamental groups with further 
notions of fundamental group schemes considered in the
literature. In this direction, note that the
profinite fundamental group of any abelian variety $A$
coincides with Nori's fundamental group scheme (defined
in \cite{No76, No82}), as shown by Nori himself in 
\cite{No83}. Also, when $k$ is algebraically closed, 
the affine fundamental group of $A$ coincides with 
its $S$-fundamental group scheme introduced by Langer 
in \cite{La11}, as follows from \cite[Thm.~6.1]{La12}.

\section{Homotopy groups in pro-artinian categories}
\label{sec:homotopy}

\subsection{Pro-artinian categories and colocalizing subcategories}
\label{subsec:pro-artinian}

Consider an artinian abelian category $\cA$, and the associated 
pro-category $\Pro(\cA)$. Then $\Pro(\cA)$ is a \emph{pro-artinian} 
category in the sense of \cite[V.2.2]{DG}; equivalently, the opposite
category is a Grothendieck category. Moreover, $\cA$ is equivalent to 
the Serre subcategory of $\Pro(\cA)$ consisting of artinian objects 
(see \cite[V.2.3.1]{DG}).
Let $\cB$ be a Serre subcategory of $\cA$; then we may view 
$\Pro(\cB)$ as a Serre subcategory of $\Pro(\cA)$, stable under 
inverse limits (see \cite[Lem.~2.11]{Br17b}). We denote by 
${^{\perp}\Pro(\cB)}$ the full subcategory of $\Pro(\cA)$ 
with objects those $X$ such that
$\Hom_{\Pro(\cA)}(X,Y) = 0$ for all $Y \in \Pro(\cB)$
(this is the left orthogonal subcategory to $\Pro(\cB)$ in 
$\Pro(\cA)$ in the sense of \cite[II.1]{BR}).

\begin{lemma}\label{lem:perp}

Let $X \in \Pro(\cA)$.

\begin{enumerate}

\item[{\rm (i)}] $X \in {^{\perp}\Pro(\cB)}$ if and only if 
$\Hom_{\Pro(\cA)}(X,Y) = 0$ for all $Y \in \cB$.

\item[{\rm (ii)}] $X$ has a smallest subobject $X^{\cB}$ 
in $\Pro(\cA)$ such that
$X/X^{\cB} \in \Pro(\cB)$. Moreover, $X^{\cB} \in {^{\perp}\Pro(\cB)}$.

\item[{\rm (iii)}] For any morphism $f : X \to Y$ in $\Pro(\cA)$, we have
$f(X^{\cB}) \subset Y^{\cB}$ with equality if $f$ is an epimorphism.
If in addition $f$ is essential and $Y \in {^{\perp}\Pro(\cB)}$, then
$X \in {^{\perp}\Pro(\cB)}$. 
\end{enumerate}

\end{lemma}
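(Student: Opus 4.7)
The plan is to handle the three parts in the order stated, each building on the previous one.

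For (i), the ``only if'' direction is immediate from $\cB \subset \Pro(\cB)$. For ``if'', write an arbitrary $Y \in \Pro(\cB)$ as an inverse limit $Y = \varprojlim_i Y_i$ with $Y_i \in \cB$. Since $\Hom_{\Pro(\cA)}(X,-)$ converts inverse limits in $\Pro(\cA)$ to limits of abelian groups, any morphism $X \to Y$ corresponds to a compatible family $X \to Y_i$, each of which vanishes by hypothesis.

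For (ii), I would consider the poset $\cS$ of subobjects $X' \subset X$ with $X/X' \in \Pro(\cB)$. The first task is to check that $\cS$ is a set, which follows from the opposite of $\Pro(\cA)$ being a Grothendieck category: every object has only a set of subobjects. Next, $\cS$ is closed under finite intersections, since $X/(X_1 \cap X_2)$ embeds into $X/X_1 \times X/X_2 \in \Pro(\cB)$ and the latter is Serre in $\Pro(\cA)$. So $\cS$ is filtered under reverse inclusion, and setting $X^{\cB} := \bigcap_{X' \in \cS} X'$ gives $X/X^{\cB} = \varprojlim_{X' \in \cS} X/X' \in \Pro(\cB)$ by the stability of $\Pro(\cB)$ under inverse limits in $\Pro(\cA)$ (\cite[Lem.~2.11]{Br17b}). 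Thus $X^{\cB}$ is the minimum element of $\cS$. To obtain $X^{\cB} \in {^{\perp}\Pro(\cB)}$, I apply criterion (i): given $f : X^{\cB} \to Z$ with $Z \in \cB$, the quotient $X/\Ker(f)$ fits in an extension $0 \to \Im(f) \to X/\Ker(f) \to X/X^{\cB} \to 0$; since $\Im(f) \in \cB$ and $X/X^{\cB} \in \Pro(\cB)$, the Serre property places $X/\Ker(f) \in \Pro(\cB)$, so minimality of $X^{\cB}$ yields $X^{\cB} \subset \Ker(f)$ and hence $f = 0$.

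For (iii), the inclusion $f(X^{\cB}) \subset Y^{\cB}$ follows by applying minimality to $f^{-1}(Y^{\cB}) \subset X$, whose quotient in $X$ embeds into $Y/Y^{\cB} \in \Pro(\cB)$. When $f$ is an epimorphism, the induced surjection $X/X^{\cB} \twoheadrightarrow Y/f(X^{\cB})$ places the target in $\Pro(\cB)$, and the minimality of $Y^{\cB}$ gives the reverse inclusion. Finally, if $Y \in {^{\perp}\Pro(\cB)}$, then (i) applied to the quotient $Y \to Y/Y^{\cB}$ forces $Y = Y^{\cB}$, so $f(X^{\cB}) = Y$ by the previous step; the essentiality of the epimorphism $f$ then forces the subobject $X^{\cB} \subset X$ mapping onto $Y$ to coincide with $X$.

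The main obstacle I anticipate is purely set-theoretic: confirming that the family $\cS$ in (ii) is genuinely a set, so that $X^{\cB}$ is well-defined as an object of $\Pro(\cA)$. Once this point is secured via the pro-artinian structure recalled in Subsection \ref{subsec:pro-artinian}, the rest of the proof is a formal manipulation of Serre subcategories and inverse limits.
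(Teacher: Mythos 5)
Your proposal is correct and follows essentially the same route as the paper: in (ii) you form $X^{\cB}$ as the intersection of all subobjects with quotient in $\Pro(\cB)$ (the paper embeds $X/\bigcap X_i$ into the product $\prod X/X_i$ where you instead pass to a filtered inverse limit, but this is the same idea), and the kernel argument for $X^{\cB}\in{^{\perp}\Pro(\cB)}$ and the image arguments in (iii) match the paper's. The only cosmetic differences are your explicit attention to the set-theoretic point and your use of $f^{-1}(Y^{\cB})$ in place of the paper's observation that the composite $X^{\cB}\to Y/Y^{\cB}$ vanishes by orthogonality.
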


\begin{proof}
(i) Let $Y \in \Pro(\cB)$. Then $Y = \lim_{\leftarrow} Y_i$, where 
$Y_i \in \cB$. Thus,
$\Hom_{\Pro(\cA)}(X,Y) = \lim_{\leftarrow} \Hom_{\cA}(X,Y_i) = 0$.

(ii) Let $(X_i)_{i \in I}$ be a family of subobjects of $X$ such that
$X/X_i \in \Pro(\cB)$ for all $i$. Then $X/(\cap_{i \in I} X_i)$ is
a subobject of $\prod_{i \in I} X/X_i$, and hence an object of 
$\Pro(\cB)$. This shows the existence of $X^{\cB}$. 

If there exists a nonzero morphism $f : X ^{\cB} \to Y$ for some
$Y \in \Pro(\cB)$, then $X' := \Ker(f)$ is a subobject of $X^{\cB}$
such that $X^{\cB} /X'$ is a nonzero object of $\Pro(\cB)$. It follows
that $X/X' \in \Pro(\cB)$, contradicting the minimality of $X^{\cB}$.
So $X^{\cB} \in {^{\perp}\Pro(\cB)}$.

(iii) The composition $X^{\cB} \to X \to Y \to Y/Y^{\cB}$ is zero,
hence $f(X^{\cB}) \subset Y^{\cB}$. If $f$ is an epimorphism,
then it induces an epimorphism $X/X^{\cB} \to Y/f(X^{\cB})$.
So $Y/f(X^{\cB}) \in \Pro(\cB)$, i.e., $Y^{\cB} \subset f(X^{\cB})$.
Hence $Y^{\cB} = f(X^{\cB})$. If in addition $f$ is essential
and $Y \in {^{\perp}\Pro(\cB)}$, then $Y = f(X^{\cB})$ and hence
$X^{\cB} = X$.
\end{proof} 

In view of Lemma \ref{lem:perp}, every $X \in \Pro(\cA)$ 
lies in a unique exact sequence
\begin{equation}\label{eqn:exact} 
0 \longrightarrow X^{\cB} \longrightarrow X \longrightarrow 
X_{\cB} \longrightarrow 0, 
\end{equation}
where $X^{\cB} \in {^{\perp}\Pro(\cB)}$ and $X_{\cB} \in \Pro(\cB)$. 
Moreover, every $f \in \Hom_{\Pro(\cA)}(X,Y)$ induces 
compatible morphisms 
\[ f^{\cB} : X^{\cB} \longrightarrow Y^{\cB}, \quad 
f_{\cB} : X_{\cB} \longrightarrow Y_{\cB}. \]
This defines a functor 
\[ \pi_0 = \pi_0^{\cA, \cB} : \Pro(\cA) \longrightarrow \Pro(\cB), \quad
X \longmapsto X_{\cB}. \]
Since $\Hom_{\Pro(\cA)}(X^{\cB}, Y) = 0$ for any $Y \in \Pro(\cB)$,
the natural map
\[ \Hom_{\Pro(\cB)}(X_{\cB},Y) \longrightarrow 
\Hom_{\Pro(\cA)}(X,Y) \]
is an isomorphism. In other words, $\pi_0$ is left adjoint
to the inclusion of $\Pro(\cB)$ in $\Pro(\cA)$. As a consequence,
$\pi_0$ is right exact and sends any projective object of
$\Pro(\cA)$ to a projective object of $\Pro(\cB)$.

\begin{lemma}\label{lem:com}
The functor $\pi_0$ commutes with filtered inverse limits.
\end{lemma}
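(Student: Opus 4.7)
The plan is to exploit the adjunction between $\pi_0$ and the inclusion $\Pro(\cB) \hookrightarrow \Pro(\cA)$, combined with the stability of $\Pro(\cB)$ under inverse limits already noted in the excerpt, and then verify the statement via Yoneda in $\Pro(\cB)$. Given a filtered inverse system $(X_\alpha)$ in $\Pro(\cA)$ with limit $X$, I would form the candidate target $Y := \lim_\alpha \pi_0(X_\alpha)$, which lies in $\Pro(\cB)$ by stability. The projections $X \to X_\alpha$ composed with the quotient maps $X_\alpha \to \pi_0(X_\alpha)$ assemble into a cone $X \to Y$ in $\Pro(\cA)$; by the universal property of $\pi_0$, this factors uniquely as $X \to \pi_0(X) \xrightarrow{\varphi} Y$, and the task is to show that $\varphi$ is an isomorphism in $\Pro(\cB)$.

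To check this, I would test $\varphi$ against an arbitrary artinian $Z \in \cB$. On one hand, the adjunction gives
\[ \Hom_{\Pro(\cB)}(\pi_0(X), Z) \;=\; \Hom_{\Pro(\cA)}(X, Z) \;=\; \Hom_{\Pro(\cA)}(\lim_\alpha X_\alpha, Z), \]
and the basic property of the pro-category --- that artinian objects of $\cA$ are ``cofinitely copresented'' in $\Pro(\cA)$ --- identifies the last group with $\operatorname{colim}_\alpha \Hom_{\Pro(\cA)}(X_\alpha, Z)$. On the other hand, applying the same property inside $\Pro(\cB)$ to $Y$, and then using the adjunction once more, yields
\[ \Hom_{\Pro(\cB)}(Y, Z) \;=\; \operatorname{colim}_\alpha \Hom_{\Pro(\cB)}(\pi_0(X_\alpha), Z) \;=\; \operatorname{colim}_\alpha \Hom_{\Pro(\cA)}(X_\alpha, Z), \]
matching the previous identification. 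So $\varphi$ induces bijections on Hom-groups into every artinian $Z \in \cB$. Since an arbitrary object of $\Pro(\cB)$ is a cofiltered limit of artinians and $\Hom(-,\cdot)$ converts limits in the second variable to limits, these bijections extend to all of $\Pro(\cB)$, and Yoneda forces $\varphi$ to be an isomorphism.

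The principal obstacle is justifying the identity $\Hom_{\Pro(\cA)}(\lim_\alpha X_\alpha, Z) = \operatorname{colim}_\alpha \Hom_{\Pro(\cA)}(X_\alpha, Z)$ for $Z \in \cA$ and a filtered inverse system $(X_\alpha)$ in $\Pro(\cA)$. This is a formal property of the pro-category, but the cleanest verification requires writing each $X_\alpha$ itself as a cofiltered limit of objects of $\cA$, reindexing the iterated limit over the combined filtered index category, and invoking the explicit formula for morphisms in $\Pro(\cA)$ recalled in \cite[V.2.2]{DG}. Once that identity is in hand, the remainder of the argument is mechanical manipulation of the adjunction and the closure of $\Pro(\cB)$ under inverse limits.
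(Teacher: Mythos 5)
Your argument is correct, but it follows a genuinely different route from the paper's. The paper works with the kernels rather than the quotients: it forms the inverse system $(X_i^{\cB})$, uses the isomorphism $\varinjlim \Hom_{\Pro(\cA)}(X_i^{\cB},Y) \cong \Hom_{\Pro(\cA)}(\varprojlim X_i^{\cB},Y)$ for artinian $Y$ (the same fact from \cite[V.2.3.3]{DG} that you call ``cofinite copresentability'') to conclude that $\varprojlim X_i^{\cB}$ lies in ${^{\perp}\Pro(\cB)}$, then invokes exactness of inverse limits in $\Pro(\cA)$ to identify $(\varprojlim X_i)/(\varprojlim X_i^{\cB})$ with $\varprojlim (X_i)_{\cB} \in \Pro(\cB)$; the uniqueness of the decomposition (\ref{eqn:exact}) then gives the result in one stroke. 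Your proof instead stays entirely on the adjunction side: you build the comparison map $\varphi \colon \pi_0(X) \to \varprojlim \pi_0(X_\alpha)$ formally and verify it is an isomorphism by testing Homs into artinian objects of $\cB$ and appealing to Yoneda. What the paper's approach buys is economy --- it needs \cite[V.2.3.3]{DG} only once and avoids the final extension-to-all-of-$\Pro(\cB)$ step --- while yours buys independence from the exactness of inverse limits and from the concrete description $\pi_0(X) = X/X^{\cB}$, so it would survive in any setting where $\pi_0$ is merely known as a left adjoint preserved by the relevant colimit formula. Two small points worth making explicit if you write this up: the compatibility of $\varphi$ with the identifications of Hom-groups (so that Yoneda actually applies to $\varphi$ and not just to some abstract bijection) is a routine but necessary naturality check; and the limit presenting an arbitrary object of $\Pro(\cB)$ as a cofiltered limit of artinians is the same whether computed in $\Pro(\cB)$ or in $\Pro(\cA)$, which is guaranteed by the stability of $\Pro(\cB)$ under inverse limits recorded at the start of Subsection \ref{subsec:pro-artinian}.
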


\begin{proof}
Consider a filtered inverse system $(X_i)$ of objects of $\Pro(\cA)$.
This yields a filtered inverse system $(X_i^{\cB})$ of objects
of  ${^{\perp}\Pro(\cB)}$; moreover, we have an isomorphism
\[ \lim_{\to} \Hom_{\Pro(\cA)}(X_i^{\cB},Y) 
\stackrel{\cong}{\longrightarrow} 
\Hom_{\Pro(\cA)}(\lim_{\leftarrow} X_i^{\cB},Y) \]
for any $Y \in \cA$ (see \cite[V.2.3.3]{DG}).
Thus, 
$\Hom_{\Pro(\cA)}(\lim_{\leftarrow} X_i^{\cB},Y) = 0$
for any $Y \in \cB$. In view of Lemma \ref{lem:perp},
it follows that 
$\lim_{\leftarrow} X_i^{\cB} \in {^{\perp}\Pro(\cB)}$.
Also, we have an isomorphism
\[ (\lim_{\leftarrow} X_i)/ (\lim_{\leftarrow} X_i^{\cB})
\cong \lim_{\leftarrow} (X_i)_{\cB} \]
by exactness of inverse limits (see \cite[V.2.2]{DG}).
So
$(\lim_{\leftarrow} X_i)/ (\lim_{\leftarrow} X_i^{\cB})$
is an object of $\Pro(\cB)$; this yields the assertion.
\end{proof}

We denote by 
\[ Q = Q^{\cA,\cB} : \Pro(\cA) \longrightarrow \Pro(\cA)/\Pro(\cB) \]
the quotient functor. Then $Q$ is exact, and commutes with
inverse limits in view of \cite[III.4.Prop.~9]{Gabriel}. Also, 
recall from [loc.~cit., III.4.Prop.~8, Cor.~1] that $Q$ 
has a left adjoint: the \emph{cosection},
\[ C = C^{\cA,\cB} : \Pro(\cA)/\Pro(\cB) \longrightarrow \Pro(\cA), \]
which also commutes with inverse limits and sends projectives
to projectives.
In other words, $\Pro(\cB)$ is a \emph{colocalizing}
subcategory of $\Pro(\cA)$, in the dual sense of 
[loc.~cit., III.2]. Conversely, every colocalizing subcategory
of $\Pro(\cA)$ is equivalent to $\Pro(\cB)$ for a unique
Serre subcategory $\cB$ of $\cA$, in view of 
[loc.~cit., III.4.Prop.~10] and \cite[Rem.~2.13]{Br17b}.
Moreover, $\Pro(\cA)/\Pro(\cB)$ is equivalent to
$\Pro(\cA/\cB)$ by \cite[Prop.~2.12]{Br17b}.

By \cite[III.2.Cor.]{Gabriel}, the essential image of $C$ 
consists of those $X \in \Pro(\cA)$ such that
\begin{equation}\label{eqn:homext}
\Hom_{\Pro(\cA)}(X,Y) = 0 = \Ext^1_{\Pro(\cA)}(X,Y)
\text{ for all } Y \in \Pro(\cB)
\end{equation}
(these are the objects of the left perpendicular subcategory 
to $\Pro(\cB)$ in $\Pro(\cA)$, as defined in \cite{GL}).
Moreover, for any $X \in \Pro(\cA)$, the adjunction map
$CQ(X) \to X$ has its kernel and cokernel in $\Pro(\cB)$
(see [loc.~cit., III.2.Prop.~3]). This yields an exact sequence
in $\Pro(\cA)$
\begin{equation}\label{eqn:univ}
0 \longrightarrow Y_1 
\stackrel{\iota}{\longrightarrow} \tX 
\stackrel{\rho}{\longrightarrow} X 
\stackrel{\gamma}{\longrightarrow} Y_0 
\longrightarrow 0,
\end{equation}
where we set $\tX = \tX^{\cA,\cB} := CQ(X)$ (in particular, 
$\tX \in {^{\perp}\Pro(\cB)}$), and we have $Y_0,Y_1 \in \Pro(\cB)$. 
Note that the long exact sequence (\ref{eqn:univ}) 
depends functorially on $X$. Also, note the natural isomorphism
\[ \Hom_{\Pro(\cA)}(\tX,Y) \cong 
\Hom_{\Pro(\cA)/\Pro(\cB)}(Q(X),Q(Y)) \]
for any $Y \in \Pro(\cA)$. In particular, if $X,Y \in \cA$ then
\begin{equation}\label{eqn:adj}
\Hom_{\Pro(\cA)}(\tX,Y) \cong \Hom_{\cA/\cB}(Q(X),Q(Y)). 
\end{equation}

\begin{lemma}\label{lem:isos}
With the above notation, we have $\rho(\tX) = X^{\cB}$
and the induced epimorphism $\eta: \tX \to X^{\cB}$ is essential. 
Also, there are functorial isomorphisms 
\[ 
\pi_0(X^{\cB}) \stackrel{\cong}{\longrightarrow} Y_0, \quad
\Hom_{\Pro(\cB)}(Y_1,Y) \stackrel{\cong}{\longrightarrow}
\Ext^1_{\Pro(\cA)}(X^{\cB}, Y) \text{ for all } Y \in \Pro(\cB). 
\]
\end{lemma}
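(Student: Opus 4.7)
My plan is to first pin down $\rho(\tX) = X^\cB$, from which both the identification of $Y_0$ and the essentiality of $\eta$ will follow readily, and then to extract the $\Ext^1$-statement from the long exact sequence attached to $0 \to Y_1 \to \tX \to X^\cB \to 0$.

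To show $\rho(\tX) = X^\cB$, I would split (\ref{eqn:univ}) into the two short exact sequences $0 \to Y_1 \to \tX \to \rho(\tX) \to 0$ and $0 \to \rho(\tX) \to X \to Y_0 \to 0$. The second of these has $Y_0 \in \Pro(\cB)$, so Lemma \ref{lem:perp}(ii) gives $X^\cB \subseteq \rho(\tX)$. The reverse inclusion rests on the elementary fact that ${^{\perp}\Pro(\cB)}$ is closed under epimorphic images: any morphism $g : \rho(\tX) \to Z$ with $Z \in \Pro(\cB)$ lifts along the epi $\tX \twoheadrightarrow \rho(\tX)$ to a map $\tX \to Z$, which vanishes since $\tX \in {^{\perp}\Pro(\cB)}$; by epi-cancellation $g = 0$, so $\rho(\tX) \in {^{\perp}\Pro(\cB)}$. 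Hence the composite $\rho(\tX) \hookrightarrow X \twoheadrightarrow X/X^\cB$ lands in $\Pro(\cB)$ and must be zero, yielding $\rho(\tX) \subseteq X^\cB$. With equality established, the second short exact sequence identifies $Y_0$ functorially with $X/X^\cB = \pi_0(X)$, giving the first claimed isomorphism.

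Essentiality of $\eta : \tX \twoheadrightarrow X^\cB$ amounts to superfluity of $\ker(\eta) = Y_1$ in $\tX$. Given $U \subseteq \tX$ with $U + Y_1 = \tX$, the second isomorphism theorem identifies $\tX/U$ with a quotient of $Y_1 \in \Pro(\cB)$; since $\tX \in {^{\perp}\Pro(\cB)}$, the canonical projection $\tX \to \tX/U$ vanishes, so $U = \tX$.

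For the $\Ext^1$-identification, I would apply $\Hom_{\Pro(\cA)}(-, Y)$ to $0 \to Y_1 \to \tX \to X^\cB \to 0$ for $Y \in \Pro(\cB)$. The resulting long exact sequence collapses dramatically because $\tX = CQ(X)$ lies in the essential image of the cosection $C$: by (\ref{eqn:homext}) both $\Hom_{\Pro(\cA)}(\tX, Y)$ and $\Ext^1_{\Pro(\cA)}(\tX, Y)$ vanish. What remains is precisely the desired isomorphism $\Hom_{\Pro(\cB)}(Y_1, Y) \cong \Ext^1_{\Pro(\cA)}(X^\cB, Y)$, and functoriality is inherited from that of (\ref{eqn:univ}). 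The one substantive step is verifying that ${^{\perp}\Pro(\cB)}$ is closed under quotients, which reduces to the epi-cancellation used above; everything else is routine diagram chasing against Lemma \ref{lem:perp} and the defining perpendicularity of $C$'s essential image.
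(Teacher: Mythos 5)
Your proposal is correct and follows essentially the same route as the paper's proof: both split (\ref{eqn:univ}) into the two short exact sequences $0 \to Y_1 \to \tX \to \rho(\tX) \to 0$ and $0 \to \rho(\tX) \to X \to Y_0 \to 0$, use (\ref{eqn:homext}) to obtain $\rho(\tX) \in {^{\perp}\Pro(\cB)}$ together with the $\Ext^1$-isomorphism in one stroke, and establish essentiality by the same modular-law argument exhibiting $\tX/Z$ as a quotient of $Y_1$. One remark: your reading of the first displayed isomorphism as $\pi_0(X) \cong Y_0$ rather than the literal $\pi_0(X^{\cB}) \cong Y_0$ (which would be $0$, since $X^{\cB} \in {^{\perp}\Pro(\cB)}$) is the intended one, as confirmed by the subsequent rewriting (\ref{eqn:long}).
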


\begin{proof}
In view of (\ref{eqn:homext}) and the exact sequence
\[ 0 \longrightarrow Y_1 \longrightarrow \tX \longrightarrow
\rho(\tX) \longrightarrow 0, \]
we obtain the vanishing of $\Hom_{\Pro(\cA)}(\rho(\tX),Y)$ 
and an isomorphism
\[ \Hom_{\Pro(\cB)}(Y_1,Y) \stackrel{\cong}{\longrightarrow}
\Ext^1_{\Pro(\cA)}(\rho(\tX), Y) \] 
for all $Y \in \Pro(\cB)$. Thus, $\rho(\tX) \in {^{\perp}\Pro(\cB)}$. 
Since $X/\rho(\tX) \cong Y_0 \in \Pro(\cB)$, it follows that
$\rho(\tX) = X^{\cB}$.

It remains to show that $\eta : \tX \to X^{\cB}$ is essential. 
Let $Z$ be a subobject of $\tX$ such that the composition
$Z \to \tX \to X^{\cB}$ is an epimorphism. Then 
$\tX = Y_1 + Z$ and hence 
$\tX/Z \cong Y_1/(Y_1 \cap Z) \in \Pro(\cB)$. As 
$\tX \in {^{\perp}\Pro(\cB)}$, it follows that 
$Y_1/(Y_1 \cap Z) = 0$, i.e., $Y_1$ is a subobject
of $Z$. Thus, $Z = \tX$.
\end{proof}

\begin{lemma}\label{lem:sc}
With the notation of the exact sequence (\ref{eqn:univ}), 
the following conditions are equivalent for $X \in \Pro(\cA)$:

\begin{enumerate}

\item[{\rm (i)}] $Y_0 = Y_1 = 0$.

\item[{\rm (ii)}] $\Hom_{\Pro(\cA)}(X,Y) = 0 = \Ext^1_{\Pro(\cA)}(X,Y)$
for all $Y \in \Pro(\cB)$.

\item[{\rm (iii)}] $\Hom_{\Pro(\cA)}(X,Y) = 0 = \Ext^1_{\Pro(\cA)}(X,Y)$
for all $Y \in \cB$.

\end{enumerate}

\end{lemma}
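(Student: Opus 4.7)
The plan is to establish the cycle (i) $\Rightarrow$ (ii) $\Rightarrow$ (iii) $\Rightarrow$ (i). The first two implications are formal; the content lies in (iii) $\Rightarrow$ (i), which I would deduce from the exact sequence (\ref{eqn:univ}) together with the functorial isomorphism of Lemma \ref{lem:isos}.

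For (i) $\Rightarrow$ (ii), if $Y_0 = Y_1 = 0$, then (\ref{eqn:univ}) collapses to an isomorphism $\rho : \tX \stackrel{\cong}{\to} X$, so $X \cong \tX = CQ(X)$ lies in the essential image of the cosection $C$; by the characterization (\ref{eqn:homext}), this essential image is precisely the class of objects satisfying (ii). The implication (ii) $\Rightarrow$ (iii) is immediate since $\cB \subset \Pro(\cB)$.

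For (iii) $\Rightarrow$ (i), I would proceed in two steps. Since representable functors send inverse limits to inverse limits, the Hom-vanishing in (iii) extends from $\cB$ to all of $\Pro(\cB)$: writing $Y = \lim_{\leftarrow} Z_j$ with $Z_j \in \cB$ gives $\Hom_{\Pro(\cA)}(X, Y) = \lim_{\leftarrow} \Hom_{\Pro(\cA)}(X, Z_j) = 0$. Applied to $Y = Y_0$, the canonical epimorphism $X \to Y_0$ coming from (\ref{eqn:univ}) must be zero, so $Y_0 = 0$ and $X = \rho(\tX) = X^{\cB}$. The harder step is $Y_1 = 0$, since $\Ext^1$ does not commute with inverse limits in its second argument. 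Here I would invoke Lemma \ref{lem:isos}: now that $X = X^{\cB}$, it supplies an isomorphism $\Hom_{\Pro(\cB)}(Y_1, Y) \cong \Ext^1_{\Pro(\cA)}(X, Y)$ for every $Y \in \Pro(\cB)$. By (iii), the right-hand side vanishes for $Y \in \cB$, and a second application of the Hom-limit argument extends this to all $Y \in \Pro(\cB)$. Taking $Y = Y_1$ forces $\id_{Y_1} = 0$, hence $Y_1 = 0$. The bridge provided by Lemma \ref{lem:isos}, trading Ext$^1$ against $\Pro(\cB)$ for Hom out of $Y_1$, is the key device that makes the passage from the hypothesis on $\cB$ to a conclusion about $\Pro(\cB)$ work.
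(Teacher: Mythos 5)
Your proof is correct, and the cycle (i)$\Rightarrow$(ii)$\Rightarrow$(iii)$\Rightarrow$(i) closes without gaps; the only point worth double-checking is that Lemma \ref{lem:isos} is available before this statement and does not depend on it, which is indeed the case (its proof rests only on (\ref{eqn:homext})). Your route differs from the paper's in the hard implication. The paper proves (iii)$\Rightarrow$(ii) and then quotes Gabriel's characterization of the essential image of $C$ for (ii)$\Leftrightarrow$(i): it takes a projective cover $P \to X$ with kernel $X'$, notes $P \in {^{\perp}\Pro(\cB)}$ via Lemma \ref{lem:perp}(iii), and uses the resulting isomorphism $\Hom_{\Pro(\cA)}(X',Y) \cong \Ext^1_{\Pro(\cA)}(X,Y)$ to convert the $\Ext^1$-vanishing problem into a $\Hom$-vanishing problem for $X'$, which then passes from $\cB$ to $\Pro(\cB)$ by Lemma \ref{lem:perp}(i). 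You instead go straight at (iii)$\Rightarrow$(i), using the isomorphism $\Hom_{\Pro(\cB)}(Y_1,Y) \cong \Ext^1_{\Pro(\cA)}(X^{\cB},Y)$ from Lemma \ref{lem:isos} to play the same role, with $Y_1$ standing in for the syzygy $X'$. Both arguments exploit the identical mechanism --- replace $\Ext^1(X,-)$, which need not commute with inverse limits in the second variable, by $\Hom(Z,-)$ for a suitable $Z$, which does --- but your choice of $Z = Y_1$ avoids invoking the existence of projective covers in $\Pro(\cA)$ and yields $Y_1 = 0$ directly by evaluating at $Y = Y_1$, which is a mild economy; the paper's version, in exchange, establishes (ii) en route rather than deducing it afterwards from (i).
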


\begin{proof}
The equivalence (i)$\Leftrightarrow$(ii) holds by
\cite[III.2.Cor.]{Gabriel}. As (ii)$\Rightarrow$(iii) is obvious, 
it suffices to show that (iii)$\Rightarrow$(ii).

Let $X \in \Pro(\cA)$ satisfy (iii), and $Y \in \Pro(\cB)$. Then
$\Hom_{\Pro(\cA)}(X,Y) = 0$ by Lemma \ref{lem:perp} (i). 
Consider an essential epimorphism $f: P \to X$, where 
$P \in \Pro(\cA)$ is projective (such a projective cover of $X$ 
exists in view of \cite[II.6.Thm.~2]{Gabriel}).Then 
$P \in {^{\perp}\Pro(\cB)}$ by Lemma \ref{lem:perp} (iii). 
So the exact sequence 
\[ 0 \longrightarrow X' \longrightarrow P 
\stackrel{f}{\longrightarrow} X 
\longrightarrow 0 \]
yields isomorphisms 
\begin{equation}\label{eqn:dec}
\Hom_{\Pro(\cA)}(X',Y) \stackrel{\cong}{\longrightarrow}
\Ext^1_{\Pro(\cA)}(X,Y) 
\end{equation}
for all $Y \in \Pro(\cB)$. In particular, 
$\Hom_{\Pro(\cA)}(X',Y) = 0$ for all $Y \in \cB$. Thus,
$X' \in  {^{\perp}\Pro(\cB)} $ by Lemma \ref{lem:perp} (i). 
Therefore, $\Ext^1_{\Pro(\cA)}(X,Y) = 0$ for all $Y \in \Pro(\cB)$.
\end{proof}

\subsection{Homotopy groups}
\label{subsec:homotopy}

We denote by 
\[ \pi_i = \pi_i^{\cA, \cB} := L^i \pi_0^{\cA,\cB} : 
\Pro(\cA) \longrightarrow \Pro(\cB) \quad (i \geq 0) \] 
the left derived functors of the right exact functor $\pi_0$.
In view of Lemma \ref{lem:com} together with \cite[V.2.3.8]{DG}, 
the \emph{$i$th homotopy functor} $\pi_i$ commutes
with filtered inverse limits for any $i \geq 0$. Also, for any 
exact sequence
\[ 0 \longrightarrow X_1 \longrightarrow X 
\longrightarrow X_2 \longrightarrow 0 \]
in $\Pro(\cA)$, we have an associated homotopy exact sequence
\begin{equation}\label{eqn:homotopy}  
\cdots \to \pi_{i+1}(X_2) \to \pi_i(X_1) \to
\pi_i(X) \to \pi_i(X_2) \to \pi_{i-1}(X_1) \to \cdots 
\end{equation}

\begin{lemma}\label{lem:pro} 
Assume that every projective object of $\Pro(\cB)$ is projective
in $\Pro(\cA)$. Then:

\begin{enumerate}

\item[{\rm (i)}] $\pi_i(Y) = 0$ for all $Y \in \Pro(\cB)$ and $i \geq 1$.

\item[{\rm (ii)}] 
$\pi_i(X^{\cB}) \stackrel{\cong}{\longrightarrow} \pi_i(X)$
for all $X \in \Pro(\cA)$ and $i \geq 1$. 

\end{enumerate}

\end{lemma}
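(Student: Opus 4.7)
For (i), the plan is to compute $\pi_i(Y) = L^i\pi_0(Y)$ by means of a projective resolution of $Y$ that lies entirely in $\Pro(\cB)$. Since $\cB$ is artinian abelian, $\Pro(\cB)$ has enough projectives, so I would begin by choosing such a resolution $\cdots \to P_1 \to P_0 \to Y \to 0$ in $\Pro(\cB)$. The standing hypothesis of the lemma ensures that each $P_j$ is projective in $\Pro(\cA)$ as well, while the Serre subcategory inclusion $\Pro(\cB) \hookrightarrow \Pro(\cA)$ is exact; hence the resolution remains exact in $\Pro(\cA)$ and qualifies as a projective resolution there. Next I would observe that for any $Z \in \Pro(\cB)$ the defining minimality property of $Z^{\cB}$ (Lemma \ref{lem:perp}(ii)) forces $Z^{\cB} = 0$, so $\pi_0(Z) = Z_{\cB} = Z$. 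Applying $\pi_0$ termwise to $P_\bullet$ therefore returns the same complex, whose homology vanishes in positive degrees because $P_\bullet \to Y$ was exact; this yields $\pi_i(Y) = 0$ for $i \geq 1$.

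For (ii), I would feed the canonical short exact sequence (\ref{eqn:exact})
\[ 0 \longrightarrow X^{\cB} \longrightarrow X \longrightarrow X_{\cB} \longrightarrow 0 \]
into the homotopy long exact sequence (\ref{eqn:homotopy}). Since $X_{\cB} \in \Pro(\cB)$, part (i) yields $\pi_j(X_{\cB}) = 0$ for all $j \geq 1$. For $i \geq 2$ the relevant segment of (\ref{eqn:homotopy}) reads $0 \to \pi_i(X^{\cB}) \to \pi_i(X) \to 0$, giving the desired isomorphism. For $i = 1$ both $\pi_2(X_{\cB})$ and $\pi_1(X_{\cB})$ vanish, so the segment $\pi_2(X_{\cB}) \to \pi_1(X^{\cB}) \to \pi_1(X) \to \pi_1(X_{\cB})$ likewise collapses to the required isomorphism.

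There is no serious obstacle here; the content of the argument is precisely that the hypothesis on projectives is what permits one to compute $\Pro(\cA)$-derived functors of $\pi_0$ on $\Pro(\cB)$-objects using resolutions drawn from $\Pro(\cB)$, after which part (ii) follows formally from the long exact sequence together with part (i).
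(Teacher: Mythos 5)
Your argument is correct and follows exactly the paper's own route: part (i) is proved by taking a projective resolution inside $\Pro(\cB)$, noting that the hypothesis makes it a projective resolution in $\Pro(\cA)$ and that $\pi_0$ acts as the identity on it, and part (ii) then follows from (i) via the exact sequence (\ref{eqn:exact}) and the long exact sequence (\ref{eqn:homotopy}). The paper states these steps more tersely, but the content is the same.
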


\begin{proof} 
(i) Let $P_{\bullet}$ be a projective resolution of $Y$ in $\Pro(\cB)$. 
Then $\pi_0(P_{\bullet}) = P_{\bullet}$
is still a projective resolution of $Y$ in $\Pro(\cA)$.

(ii) This follows from (i) in view of the exact sequence
(\ref{eqn:exact}).
\end{proof}

\begin{lemma}\label{lem:fund}
With the assumption of Lemma \ref{lem:pro}, there is a
functorial isomorphism $\pi_1(X) \cong Y_1$ for any 
$X \in \Pro(\cA)$.
\end{lemma}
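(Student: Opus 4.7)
The plan is to split the universal exact sequence (\ref{eqn:univ}) into two short exact sequences, using $\rho(\tX) = X^{\cB}$ from Lemma \ref{lem:isos}:
\[ 0 \longrightarrow Y_1 \longrightarrow \tX \longrightarrow X^{\cB} \longrightarrow 0, \qquad 0 \longrightarrow X^{\cB} \longrightarrow X \longrightarrow Y_0 \longrightarrow 0, \]
and to chase through the associated homotopy long exact sequences (\ref{eqn:homotopy}). The second sequence, combined with $\pi_i(Y_0) = 0$ for $i \geq 1$ from Lemma \ref{lem:pro}(i), gives $\pi_1(X^{\cB}) \cong \pi_1(X)$ (this is essentially Lemma \ref{lem:pro}(ii)). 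For the first sequence, using $\pi_1(Y_1) = 0$, $\pi_0(Y_1) = Y_1$, and $\pi_0(\tX) = 0 = \pi_0(X^{\cB})$ (since both $\tX$ and $X^{\cB}$ lie in ${^{\perp}\Pro(\cB)}$, as recorded in Lemma \ref{lem:isos}), the long exact sequence collapses to
\[ 0 \longrightarrow \pi_1(\tX) \longrightarrow \pi_1(X^{\cB}) \longrightarrow Y_1 \longrightarrow 0. \]
Thus the whole proof reduces to establishing $\pi_1(\tX) = 0$.

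To prove this vanishing, I would choose a projective cover $f : P \to \tX$ in $\Pro(\cA)$, which exists by \cite[II.6.Thm.~2]{Gabriel}. Since $f$ is essential and $\tX \in {^{\perp}\Pro(\cB)}$, Lemma \ref{lem:perp}(iii) gives $P \in {^{\perp}\Pro(\cB)}$, and in particular $\pi_0(P) = 0$. Setting $K := \Ker(f)$ and applying $\Hom_{\Pro(\cA)}(-, Y)$ to $0 \to K \to P \to \tX \to 0$ for $Y \in \Pro(\cB)$, I would now exploit the characterisation of the essential image of $C$ via (\ref{eqn:homext}): because $\tX = CQ(X)$, both $\Hom_{\Pro(\cA)}(\tX, Y)$ and $\Ext^1_{\Pro(\cA)}(\tX, Y)$ vanish. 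This forces $\Hom_{\Pro(\cA)}(K, Y) \cong \Hom_{\Pro(\cA)}(P, Y) = 0$, and so $K \in {^{\perp}\Pro(\cB)}$ by Lemma \ref{lem:perp}(i). Consequently $\pi_0(K) = 0$, and the homotopy long exact sequence of $0 \to K \to P \to \tX \to 0$ reads $0 = \pi_1(P) \to \pi_1(\tX) \to \pi_0(K) = 0$, giving the desired vanishing.

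Stringing the isomorphisms together yields $\pi_1(X) \cong \pi_1(X^{\cB}) \cong Y_1$; functoriality in $X$ is inherited from the functorial dependence of (\ref{eqn:univ}) on $X$ together with the naturality of the connecting homomorphisms. The delicate point is the vanishing $\pi_1(\tX) = 0$: merely knowing $\tX \in {^{\perp}\Pro(\cB)}$ is not enough, one must exploit the stronger $\Ext^1$-vanishing in (\ref{eqn:homext}) that characterises the essential image of the cosection $C$, which is precisely what propagates ${^{\perp}\Pro(\cB)}$-membership from $\tX$ to the syzygy $K$.
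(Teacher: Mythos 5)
Your proposal is correct and follows essentially the same route as the paper: both arguments reduce everything to the vanishing $\pi_1(\tX)=0$, which is established identically via a projective cover $P \to \tX$ lying in ${^{\perp}\Pro(\cB)}$ together with the $\Ext^1$-vanishing of (\ref{eqn:homext}) that characterises the essential image of $C$. The only cosmetic difference is that you split the four-term sequence (\ref{eqn:univ}) directly at $X^{\cB}$, whereas the paper first reduces to $X \in {^{\perp}\Pro(\cB)}$ by observing that $CQ(X^{\cB}) \to CQ(X)$ is an isomorphism.
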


\begin{proof}
The exact sequence (\ref{eqn:exact}) yields an isomorphism
$Q(X^{\cB}) \to Q(X)$ in $\Pro(\cA)/\Pro(\cB)$, and hence 
an isomorphism $CQ(X^{\cB}) \to CQ(X)$ in $\Pro(\cA)$. 
In turn, this yields an isomorphism
$Y_1(X^{\cB}) \to Y_1(X)$ in $\Pro(\cB)$,
where $Y_1(X^{\cB})$ denotes the kernel of the adjunction map
$CQ(X^{\cB}) \to X^{\cB}$, and $Y_1(X)$ is defined similarly. 
Thus, we may assume that 
$X \in  {^{\perp}\Pro(\cB)}$. We then have an exact sequence
\[ 0 \longrightarrow Y_1 \longrightarrow \tX \longrightarrow 
X \longrightarrow 0, \]
which yields an exact sequence
\[ \pi_1(\tX) \longrightarrow \pi_1(X) \longrightarrow
Y_1 \longrightarrow \pi_0(\tX). \]
Moreover, $\pi_0(\tX) = 0$ by Lemma \ref{lem:sc}. So it
suffices to show that $\pi_1(\tX) = 0$. 

As in the proof of Lemma \ref{lem:sc}, consider an exact sequence
\[ 0 \longrightarrow X' \longrightarrow P 
\stackrel{f}{\longrightarrow} \tX \longrightarrow 0, \]
where $P$ is projective and $f$ is essential. We obtain an exact sequence
 \[ \pi_1(P) \longrightarrow \pi_1(\tX) \longrightarrow
\pi_0(X') \longrightarrow \pi_0(P). \]
Moreover, $\pi_0(P) = 0$ by Lemma \ref{lem:perp} (iii), 
and $\pi_1(P) = 0$ by definition.  Thus, 
$\pi_1(\tX) \cong \pi_0(X')$. Also, recall from
(\ref{eqn:homext}) that $\Ext^1_{\Pro(\cA)}(\tX,Y) = 0$ for all
$Y \in \Pro(\cB)$. Using the isomorphism (\ref{eqn:dec}), this yields 
$\Hom_{\Pro(\cA)}(X',Y) = 0$, and hence $\pi_0(X') = 0$. 
Thus, $\pi_1(\tX) = 0$ as desired.
\end{proof}

In view of Lemmas \ref{lem:isos} and \ref{lem:fund}, 
the exact sequence (\ref{eqn:univ}) can be rewritten
in a more suggestive way. Namely, with the assumption
of Lemma \ref{lem:pro}, we have an exact sequence 
for any $X \in \Pro(\cA)$:
\begin{equation}\label{eqn:long}
0 \longrightarrow \pi_1(X)
\stackrel{\iota_X}{\longrightarrow} \tX
\stackrel{\rho_X}{\longrightarrow} X
\stackrel{\gamma_X}{\longrightarrow} \pi_0(X)
\longrightarrow 0. 
\end{equation}
In particular, when $X \in {^{\perp}\Pro(\cB)}$,
we obtain an extension
\begin{equation}\label{eqn:funda}
0 \longrightarrow \pi_1(X)
\longrightarrow \tX \longrightarrow X
\longrightarrow 0.
\end{equation}

Using Lemmas \ref{lem:isos} and \ref{lem:fund} again, 
this yields in turn:

\begin{corollary}\label{cor:sc}
With the assumption of Lemma \ref{lem:pro}, let 
$X \in {^{\perp}\Pro(\cB)}$ and $Y \in \Pro(\cB)$. Then
$\Hom_{\Pro(\cA)}(\pi_1(X),Y) 
\stackrel{\cong}{\to} \Ext^1_{\Pro(\cA)}(X,Y)$
via pushout by the extension (\ref{eqn:funda}).
\end{corollary}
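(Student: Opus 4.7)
The plan is to apply $\Hom_{\Pro(\cA)}(-,Y)$ to the short exact sequence (\ref{eqn:funda})
\[ 0 \longrightarrow \pi_1(X) \stackrel{\iota_X}{\longrightarrow} \tX \stackrel{\rho_X}{\longrightarrow} X \longrightarrow 0 \]
and then read off the desired isomorphism from the resulting long exact sequence of $\Ext$-groups, after killing the neighbouring terms.

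More precisely, applying $\Hom_{\Pro(\cA)}(-,Y)$ produces the exact sequence
\[ 0 \to \Hom_{\Pro(\cA)}(X,Y) \to \Hom_{\Pro(\cA)}(\tX,Y) \to \Hom_{\Pro(\cA)}(\pi_1(X),Y) \stackrel{\delta}{\to} \Ext^1_{\Pro(\cA)}(X,Y) \to \Ext^1_{\Pro(\cA)}(\tX,Y), \]
and by a standard fact about the Yoneda/connecting map, $\delta$ is exactly pushout along $\iota_X$, i.e. it sends a morphism $f : \pi_1(X) \to Y$ to the class of the pushout extension of (\ref{eqn:funda}) by $f$.

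It then remains to show that the two outer $\Hom$ and the final $\Ext^1$ vanish. The vanishing of $\Hom_{\Pro(\cA)}(X,Y)$ follows directly from the assumption $X \in {^{\perp}\Pro(\cB)}$ together with $Y \in \Pro(\cB)$ (this is the definition of the left orthogonal, using also Lemma \ref{lem:perp}(i) when $Y$ is only pro-finite). For the other two, recall from Lemma \ref{lem:fund} and the construction preceding it that $\tX = CQ(X)$ lies in the essential image of the cosection $C$, so by (\ref{eqn:homext}) we have
\[ \Hom_{\Pro(\cA)}(\tX,Y) = 0 = \Ext^1_{\Pro(\cA)}(\tX,Y) \qquad \text{for all } Y \in \Pro(\cB). \]
Substituting these three vanishings into the long exact sequence yields that $\delta$ is an isomorphism, which is the claim.

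There is no serious obstacle here: the statement is essentially a repackaging of (\ref{eqn:homext}) and of the identification $\pi_1(X) \cong Y_1$ from Lemma \ref{lem:fund}, once one knows that the pushout construction realises the connecting map $\delta$ in the $\Ext$-sequence. The only point requiring a little care is the functoriality in $Y$, which is automatic since all maps involved—$\Ext$-sequence, pushout, and the isomorphism of Lemma \ref{lem:fund}—are functorial.
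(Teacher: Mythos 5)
Your proof is correct and follows essentially the same route as the paper: the paper deduces the corollary from Lemma \ref{lem:isos} (whose proof is precisely the long exact sequence of $\Hom(-,Y)$ applied to $0 \to Y_1 \to \tX \to X^{\cB} \to 0$, killing the middle terms via (\ref{eqn:homext})) together with the identification $\pi_1(X) \cong Y_1$ of Lemma \ref{lem:fund}. Your direct application of $\Hom_{\Pro(\cA)}(-,Y)$ to (\ref{eqn:funda}) and identification of the connecting map with pushout is exactly this argument unwound.
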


In other words, (\ref{eqn:funda}) is the universal 
extension of $X$ by an object of $\Pro(\cB)$. We now 
record a similar uniqueness result for the exact
sequence (\ref{eqn:long}), to be used in Subsection
\ref{subsec:profinite}.

\begin{lemma}\label{lem:unique}
With the assumption of Lemma \ref{lem:pro},
consider an exact sequence
\begin{equation}\label{eqn:unique}
0 \longrightarrow Y_1 \longrightarrow X' \longrightarrow 
X \longrightarrow Y_0 \longrightarrow 0
\end{equation} 
in $\Pro(\cA)$, where $Y_0,Y_1 \in \Pro(\cB)$
and $X'$ is in the essential image of $C$.
Then there is a commutative diagram of exact sequences
\[ \xymatrix{
0 \ar[r] & \pi_1(X) \ar[r] \ar[d]_{f_1} & \tX \ar[r] \ar[d]_{f'} & 
X  \ar[r] \ar[d]_{f} & \pi_0(X)  \ar[r] \ar[d]_{f_0} & 0 \\
0 \ar[r] & Y_1 \ar[r]  & X'  \ar[r] & X  \ar[r]  & Y_0 \ar[r]  & 0, \\
} \]
where $f_1,f',f,f_0$ are isomorphisms.
\end{lemma}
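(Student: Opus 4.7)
The plan is to exhibit (\ref{eqn:unique}) as another instance of the universal four-term exact sequence (\ref{eqn:univ})/(\ref{eqn:long}) attached to $X$, and to use the adjunction $C \dashv Q$ to produce the comparison isomorphisms. First I would factor (\ref{eqn:unique}) into two short exact sequences via $Z := \Im(X' \to X) = \Ker(X \to Y_0)$:
\[
0 \to Y_1 \to X' \to Z \to 0, \qquad 0 \to Z \to X \to Y_0 \to 0.
\]

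From the characterisation (\ref{eqn:homext}) of the essential image of $C$, we have $X' \in {^{\perp}\Pro(\cB)}$. I claim that ${^{\perp}\Pro(\cB)}$ is stable under quotients: given an epimorphism $X' \to Z$ and any morphism $\psi: Z \to Y$ with $Y \in \cB$, the composition $X' \to Z \to Y$ vanishes since $X' \in {^{\perp}\Pro(\cB)}$, whence $\psi = 0$ because $X' \to Z$ is epi; by Lemma \ref{lem:perp}(i) this suffices. Thus $Z \in {^{\perp}\Pro(\cB)}$, and since $X/Z \cong Y_0 \in \Pro(\cB)$, Lemma \ref{lem:perp}(ii) identifies $Z$ with $X^{\cB}$. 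Consequently $Y_0 \cong X/X^{\cB} = \pi_0(X)$, which defines the isomorphism $f_0$ so that the rightmost square commutes with $f = \id_X$.

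Next I would construct $f'$. Applying the exact functor $Q$ to (\ref{eqn:unique}) kills $Y_0$ and $Y_1$, so the induced morphism $Q(X') \to Q(X)$ is an isomorphism in $\Pro(\cA)/\Pro(\cB)$. Applying $C$ and invoking the fact that the adjunction counit $CQ(X') \to X'$ is an isomorphism (because $X'$ is in the essential image of $C$, equivalently because $QC \cong \id$ so that $C$ is fully faithful onto its essential image) I obtain
\[
f' : \tX = CQ(X) \stackrel{\cong}{\longrightarrow} CQ(X') \stackrel{\cong}{\longrightarrow} X'.
\]
Commutativity of the middle square is automatic: under the adjunction bijection $\Hom_{\Pro(\cA)}(\tX, X) \cong \Hom_{\Pro(\cA)/\Pro(\cB)}(Q(X), Q(X))$, both the composition $\tX \stackrel{f'}{\to} X' \to X$ and the map $\rho_X$ correspond to $\id_{Q(X)}$. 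Passing to kernels then produces the remaining isomorphism $f_1 : \pi_1(X) \stackrel{\cong}{\to} Y_1$ and makes the leftmost square commute, where I use Lemma \ref{lem:fund} (permitted by the standing assumption of Lemma \ref{lem:pro}) to identify the kernel of $\rho_X$ with $\pi_1(X)$ rather than merely with the $Y_1$ of (\ref{eqn:univ}).

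The only delicate point is the compatibility check in the middle square; this is purely formal, flowing from the triangle identities of the adjunction $C \dashv Q$ together with the natural isomorphism $QC \cong \id$. Everything else reduces to Lemmas \ref{lem:perp} and \ref{lem:fund}, so no genuinely new ingredient is required.
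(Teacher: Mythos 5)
Your proof is correct and follows essentially the same route as the paper's: split the four-term sequence at $Z=\Im(X'\to X)$, identify $Z$ with $X^{\cB}$ and $Y_0$ with $\pi_0(X)$, then apply $Q$ and $C$ and use that the counit $CQ(X')\to X'$ is an isomorphism to get $\tX\cong X'$. The only (harmless) divergence is at the end, where the paper appeals to the uniqueness of the universal extension of $X^{\cB}$ by an object of $\Pro(\cB)$ via Lemma \ref{lem:sc}, whereas you verify the compatibility of the middle square directly from naturality of the counit and the triangle identities --- both are valid.
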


\begin{proof}
Cut the exact sequence (\ref{eqn:unique}) 
in two short exact sequences
\[ 0 \longrightarrow Y_1 \longrightarrow X' 
\longrightarrow X''  \longrightarrow 0, \quad 
0 \longrightarrow X'' \longrightarrow X 
\longrightarrow Y_0  \longrightarrow 0. \]
Since $X' $ is an object of ${^{\perp}\Pro(\cB)}$, 
so is $X''$. As $Y_0 \in \cB$, we obtain 
a commutative diagram of exact sequences
\[ \xymatrix{
0 \ar[r] &  X^{\cB} \ar[r] \ar[d]_{f''} & 
X  \ar[r] \ar[d]_{f} & \pi_0(X)  \ar[r] \ar[d]_{f_0} & 0 \\
0 \ar[r] & X''  \ar[r] & X  \ar[r]  & Y_0 \ar[r]  & 0, \\
} \]
where the vertical arrows are isomorphisms. 
As a consequence, we may replace $X$ with 
$X^{\cB}$, and assume that $\pi_0(X) = 0 = Y_0$.

Also, the induced morphism $Q(X') \to Q(X)$
is an isomorphism, and hence so is 
$CQ(X') \to CQ(X) = \tX$.  Since the adjunction
$CQ(X') \to X'$ is an isomorphism, this yields
an isomorphism $\tX \cong X'$. Thus, we may 
further assume that (\ref{eqn:unique}) is of the form
\[ 0 \longrightarrow Y_1 \longrightarrow \tX
\longrightarrow X \longrightarrow 0. \]
Then the associated map
$\Hom_{\Pro(\cB)}(Y_1,Y) \to \Ext^1_{\Pro(\cA)}(X,Y)$
is an isomorphism for all $Y \in \Pro(\cB)$, 
by Lemma \ref{lem:sc}. In view of the uniqueness 
of the universal extension of $X$ by an object of 
$\Pro(\cB)$, this completes the proof.
\end{proof}

Next, we obtain two reformulations of the left exactness
of the functor $\pi_1$:

\begin{lemma}\label{lem:vanishing}
With the assumption of Lemma \ref{lem:pro}, the following
conditions are equivalent:

\begin{enumerate}

\item[{\rm (i)}] The cosection functor 
$C : \Pro(\cA)/\Pro(\cB) \to \Pro(\cA)$ is exact. 

\item[{\rm (ii)}] $\pi_1$ is left exact. 

\item[{\rm (iii)}] $\pi_i = 0$ for all $i \geq 2$.

\end{enumerate}

\end{lemma}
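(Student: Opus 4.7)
The plan is to establish (ii) $\Leftrightarrow$ (iii) directly from the homotopy long exact sequence, (i) $\Rightarrow$ (ii) by the snake lemma applied to $C$ of a short exact sequence, and (ii) $\Rightarrow$ (i) by using left exactness of $\pi_1$ to force the kernel of the counit to vanish on a subobject of an object in the essential image of $C$.

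For (iii) $\Rightarrow$ (ii) I would read off left exactness from (\ref{eqn:homotopy}). Conversely, to obtain (iii) from (ii), I would take a projective cover $0 \to X' \to P \to X \to 0$ of any $X \in \Pro(\cA)$ (which exists by \cite[II.6.Thm.~2]{Gabriel}); since $\pi_i(P) = 0$ for $i \geq 1$, (\ref{eqn:homotopy}) collapses to give $\pi_{i+1}(X) \cong \pi_i(X')$ for $i \geq 1$. Left exactness of $\pi_1$ applied to the monomorphism $X' \hookrightarrow P$ yields $\pi_1(X') \hookrightarrow \pi_1(P) = 0$, so $\pi_2(X) \cong \pi_1(X') = 0$; an induction on $i$ then gives $\pi_i = 0$ for all $i \geq 2$.

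For (i) $\Rightarrow$ (ii), given a short exact sequence $0 \to X_1 \to X \to X_2 \to 0$ in $\Pro(\cA)$, I would apply the exact functor $Q$ and then, using hypothesis (i), the exact functor $C$ to obtain a short exact sequence $0 \to \tX_1 \to \tX \to \tX_2 \to 0$. Together with the counits $\rho_{X_i} : \tX_i \to X_i$, this forms a commutative ladder of short exact sequences whose vertical kernels and cokernels are, by Lemmas \ref{lem:isos} and \ref{lem:fund}, respectively $\pi_1(X_i)$ and $\pi_0(X_i)$. The snake lemma then produces a six-term exact sequence beginning $0 \to \pi_1(X_1) \to \pi_1(X) \to \pi_1(X_2)$, which is precisely the left exactness of $\pi_1$.

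For (ii) $\Rightarrow$ (i), since $C$ is a left adjoint it is automatically right exact, so it suffices to check that $C$ preserves monomorphisms. Given $Z_1 \hookrightarrow Z$ in $\Pro(\cA)/\Pro(\cB)$, set $X := C(Z)$; then $X$ lies in the essential image of $C$, so Lemma \ref{lem:sc} together with the identifications $Y_0 = \pi_0(X)$, $Y_1 = \pi_1(X)$ from Lemmas \ref{lem:isos} and \ref{lem:fund} yields $\pi_0(X) = \pi_1(X) = 0$ and $\tX = X$. Using the standard lifting of subobjects through a Gabriel quotient (adjusting by a $\Pro(\cB)$-subobject of the kernel if necessary), realize $Z_1 \hookrightarrow Z = Q(X)$ as $Q$ applied to a monomorphism $X_1 \hookrightarrow X$ in $\Pro(\cA)$. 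Then $C(Z_1) = CQ(X_1) = \tX_1$, and by naturality of the counit the induced map $C(Z_1) \to C(Z)$ factors as $\tX_1 \stackrel{\rho_{X_1}}{\longrightarrow} X_1 \hookrightarrow X = \tX$. By (ii), $\pi_1(X_1) \hookrightarrow \pi_1(X) = 0$, so $\ker \rho_{X_1} = \pi_1(X_1) = 0$; hence the composite is injective. The main obstacle is precisely this last direction: one must simultaneously lift the given monomorphism from the quotient to $\Pro(\cA)$ and identify the kernel of the counit on a subobject with $\pi_1$ of that subobject, so that the hypothesis on $\pi_1$ can be applied.
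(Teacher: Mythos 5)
Your proposal is correct, and three of the four implications coincide with the paper's proof: (i)$\Rightarrow$(ii) via applying the exact composite $C \circ Q$ to a short exact sequence and running the snake lemma on the resulting ladder against (\ref{eqn:long}); (ii)$\Rightarrow$(iii) by dimension shifting along a projective cover; and (iii)$\Rightarrow$(ii) from the homotopy exact sequence (\ref{eqn:homotopy}). The one place you diverge is (ii)$\Rightarrow$(i), where the paper simply invokes the dual of \cite[III.3.Prop.~7]{Gabriel}, whereas you give a self-contained argument: $C$ is right exact as a left adjoint, so it suffices to preserve monomorphisms; you lift a subobject $Z_1 \hookrightarrow Z = Q(X)$ with $X = C(Z)$ to a monomorphism $X_1 \hookrightarrow X$ in $\Pro(\cA)$ (legitimate, since every subobject of $Q(X)$ in a Gabriel quotient is of the form $Q(X_1)$ for $X_1 \subset X$), identify $C(Z_1) \to C(Z)$ with $\tX_1 \xrightarrow{\rho_{X_1}} X_1 \hookrightarrow X$ by naturality of the counit, and kill $\ker\rho_{X_1} = \pi_1(X_1)$ using left exactness of $\pi_1$ together with $\pi_1(X) = 0$ (Lemmas \ref{lem:sc} and \ref{lem:fund}). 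This checks out and is arguably the more transparent route: it makes visible exactly how the hypothesis on $\pi_1$ enters, at the cost of rederiving a statement Gabriel already provides in the general theory of (co)localizing subcategories.
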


\begin{proof}
(i)$\Rightarrow$(ii) 
Consider an exact sequence 
\[ 0 \longrightarrow  X_1 \longrightarrow X 
\longrightarrow X_2 \longrightarrow 0 \] 
in $\Pro(\cA)$. Then we have a commutative diagram 
of exact sequences
\[ \xymatrix{
0 \ar[r] & \tX_1 \ar[r] \ar[d] & \tX \ar[r] \ar[d] & \tX_2 \ar[r] \ar[d] & 0 \\
0 \ar[r] & X_1 \ar[r]  & X \ar[r] & X_2 \ar[r]  & 0. \\
} \]
In view of the exact sequence (\ref{eqn:long}) and its analogues for
$X_1$, $X_2$,  the snake lemma yields an exact sequence
\[ 0 \to \pi_1(X_1) \to \pi_1(X)
\to \pi_1(X_2) \to \pi_0(X_1)
\to \pi_0(X) \to \pi_0(X_2) \to 0. \]
In particular, $\pi_1$ is left exact.

(ii)$\Rightarrow$(i)
This follows from the dual statement of \cite[III.3.Prop.~7]{Gabriel}. 

(ii)$\Rightarrow$(iii) 
This is obtained by a standard argument that we recall for 
completeness. Let $X \in \Pro(\cA)$ and choose a projective cover 
\[ 0 \longrightarrow X' \longrightarrow P 
\longrightarrow X \longrightarrow 0. \] 
As $\pi_i(P) = 0$ for all $i \geq 1$, we obtain isomorphisms
$\pi_i(X) \stackrel{\cong}{\to} \pi_{i - 1}(X')$
for all $i \geq 2$. Since $X'$ is a subobject of $P$, 
we have $\pi_1(X') = 0$ by left exactness, hence
$\pi_2(X) = 0$. Iterating this argument
completes the proof. 

(iii)$\Rightarrow$(ii) This follows from the homotopy
exact sequence (\ref{eqn:homotopy}).

\end{proof}

Finally, we record an easy and useful divisibility 
property of homotopy groups. 
For any $X \in \Pro(\cA)$ and any integer
$n$, we denote by $n_X \in \End_{\cA}(X)$ the
multiplication by $n$, and by $X[n]$ its kernel.
We say that $X$ is \emph{divisible} 
(resp.~\emph{uniquely divisible}) 
if $n_X$ is an epimorphism (resp.~an isomorphism)
for any $n \geq 1$.

\begin{lemma}\label{lem:tors}
With the assumption of Lemma \ref{lem:pro}, 
let $X$ be an object of $\Pro(\cA)$. Assume that $X$ 
is divisible and $X[n] \in \Pro(\cB)$ for any $n \geq 1$
(in particular, $\pi_i(X[n]) = 0$ for any such $n$ 
and any $i \geq 1$). Then $\tX$ and the $\pi_i(X)$  
($i \geq 2$) are uniquely divisible. Moreover, there is 
an exact sequence
\[ 0 \longrightarrow \pi_1(X) 
\stackrel{n}{\longrightarrow} \pi_1(X)
\longrightarrow X[n] \longrightarrow \pi_0(X) 
\stackrel{n}{\longrightarrow} \pi_0(X)
\longrightarrow 0 \]
for any $n \geq 1$.
\end{lemma}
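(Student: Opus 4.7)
The plan is to exploit the short exact sequence
\[ 0 \longrightarrow X[n] \longrightarrow X \stackrel{n}{\longrightarrow} X \longrightarrow 0, \]
which is exact in $\Pro(\cA)$ since $X$ is divisible. All three assertions will follow by feeding this into, respectively, the homotopy long exact sequence (for the $\pi_i$ and the displayed four-term sequence) and the exact quotient functor $Q$ (for $\tX$).

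For the homotopy statements, I would apply the long exact sequence (\ref{eqn:homotopy}) to the above. By Lemma \ref{lem:pro}(i), the hypothesis $X[n] \in \Pro(\cB)$ forces $\pi_i(X[n]) = 0$ for all $i \geq 1$. Moreover $\pi_0(X[n]) = X[n]$, since any $Y \in \Pro(\cB)$ has $Y^{\cB} = 0$ (the quotient $Y/0 = Y$ lies in $\Pro(\cB)$) and hence $\pi_0(Y) = Y_{\cB} = Y$. For $i \geq 2$, the relevant chunk reads $\pi_i(X[n]) \to \pi_i(X) \stackrel{n}{\to} \pi_i(X) \to \pi_{i-1}(X[n])$ with both end terms zero, so $n$ acts as an isomorphism on $\pi_i(X)$, which is unique divisibility. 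Reading off the tail of the long exact sequence, with injectivity of $n$ on $\pi_1(X)$ secured by the vanishing $\pi_1(X[n]) = 0$ entering from the left, gives exactly the displayed four-term exact sequence.

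For the unique divisibility of $\tX$, rather than grind through the snake lemma on (\ref{eqn:long}) I would go directly through $Q$. Since $Q : \Pro(\cA) \to \Pro(\cA)/\Pro(\cB)$ is exact and vanishes on $\Pro(\cB)$, applying it to the $n$-torsion sequence yields $0 \to 0 \to Q(X) \stackrel{n}{\to} Q(X) \to 0$, so $n : Q(X) \to Q(X)$ is an isomorphism in the quotient category. As any functor preserves isomorphisms, applying the cosection $C$ gives that $n : \tX = CQ(X) \to CQ(X) = \tX$ is an isomorphism in $\Pro(\cA)$, which is precisely unique divisibility of $\tX$.

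I do not anticipate a real obstacle here: the argument is essentially bookkeeping once one has the two key observations, namely $\pi_0(Y) = Y$ for $Y \in \Pro(\cB)$ and the exactness of $Q$ combined with its vanishing on $\Pro(\cB)$. The only point deserving brief care is checking that the map entering $\pi_1(X)$ from the left in the long exact sequence is $\pi_2(X) \to \pi_1(X[n]) = 0$, which is what produces the leading $0$ in the displayed four-term sequence.
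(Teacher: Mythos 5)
Your proof is correct and follows essentially the same route as the paper: apply the exact quotient functor $Q$ to $0 \to X[n] \to X \stackrel{n}{\to} X \to 0$ (killing $X[n]$) and then $C$ to get unique divisibility of $\tX$, and read the remaining assertions off the homotopy long exact sequence of that same short exact sequence. The paper leaves the long-exact-sequence bookkeeping implicit; you have simply written it out, including the correct observations that $\pi_0(X[n]) = X[n]$ and $\pi_i(X[n]) = 0$ for $i \geq 1$.
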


\begin{proof}
By assumption, we have an exact sequence
\begin{equation}\label{eqn:divisible}
0 \longrightarrow X[n] \longrightarrow X
\stackrel{n_X}{\longrightarrow} X \longrightarrow 0 
\end{equation}
for any $n \geq 1$. Thus, $n_X$ induces 
an automorphism of $Q(X)$, and hence of 
$CQ(X) = \tX$. In other words, $\tX$ is uniquely divisible.
The remaining assertions follow from the homotopy 
exact sequence associated with (\ref{eqn:divisible}).
\end{proof}

\subsection{Structure of projective objects}
\label{subsec:proj}

In this subsection, we consider an artinian abelian category
$\cA$ and a Serre subcategory $\cB$ such that every
projective object of $\Pro(\cB)$ is projective in $\Pro(\cA)$.
Our aim is to describe the projectives of $\Pro(\cA)$
in terms of those of $\Pro(\cB)$ and 
$\Pro(\cA)/\Pro(\cB) \cong \Pro(\cA/\cB)$. 
We first obtain a generalization of \cite[V.3.3.9]{DG}:

\begin{lemma}\label{lem:proj}
For any projective object $P \in \Pro(\cA)$, there is an isomorphism
$P \cong P^{\cB} \oplus \pi_0(P)$ which is compatible
with $\gamma_P : P \to \pi_0(P)$. Moreover,
$\tP \cong P^{\cB}$.
\end{lemma}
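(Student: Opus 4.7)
The plan is to exploit two facts established earlier: $\pi_0$ sends projectives to projectives (as the left adjoint of the inclusion $\Pro(\cB) \hookrightarrow \Pro(\cA)$), and $\pi_i(P) = 0$ for every projective $P$ and $i \geq 1$ (by definition of derived functors). Combined with the standing hypothesis that projectives of $\Pro(\cB)$ remain projective in $\Pro(\cA)$, these give immediate splitting and identification arguments.

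First, I would apply $\pi_0$ to the projective object $P$: the resulting object $\pi_0(P) \in \Pro(\cB)$ is projective in $\Pro(\cB)$, and then projective in $\Pro(\cA)$ by our standing assumption. Hence the defining short exact sequence
\[ 0 \longrightarrow P^{\cB} \longrightarrow P \stackrel{\gamma_P}{\longrightarrow} \pi_0(P) \longrightarrow 0 \]
from (\ref{eqn:exact}) splits, yielding $P \cong P^{\cB} \oplus \pi_0(P)$ in a way that identifies $\gamma_P$ with the projection onto the second summand. This takes care of the first assertion.

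For the second assertion $\tP \cong P^{\cB}$, I would apply the four-term sequence (\ref{eqn:long}) with $X = P$:
\[ 0 \longrightarrow \pi_1(P) \longrightarrow \tP \stackrel{\rho_P}{\longrightarrow} P \stackrel{\gamma_P}{\longrightarrow} \pi_0(P) \longrightarrow 0. \]
Since $P$ is projective and $\pi_i = L^i \pi_0$, we have $\pi_1(P) = 0$, so $\rho_P$ is a monomorphism onto its image, which by Lemma \ref{lem:isos} equals $P^{\cB}$. This gives the desired isomorphism $\tP \cong P^{\cB}$.

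There is no genuine obstacle here; the only subtlety is making sure to invoke the hypothesis on projectives of $\Pro(\cB)$ at exactly the point where splitting is required, and to cite the vanishing $\pi_1(P) = 0$ (rather than, say, trying to verify $P^{\cB}$ lies in the essential image of $C$ directly, which would also work by checking (\ref{eqn:homext}) using that $P^{\cB}$ is a direct summand of the projective $P$, but is a longer route).
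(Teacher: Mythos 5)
Your proof is correct. The first assertion is handled exactly as in the paper: $\pi_0(P)$ is projective in $\Pro(\cB)$ because $\pi_0$ is a left adjoint of an exact inclusion, hence projective in $\Pro(\cA)$ by the standing hypothesis, and the sequence (\ref{eqn:exact}) splits compatibly with $\gamma_P$.

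For the second assertion you take a mildly different route. The paper argues directly that $P^{\cB}$, being a direct summand of the projective $P$, is projective and lies in ${^{\perp}\Pro(\cB)}$, hence satisfies the two vanishing conditions (\ref{eqn:homext}) characterizing the essential image of $C$; therefore $CQ(P^{\cB}) \to P^{\cB}$ is an isomorphism, and $CQ(P^{\cB}) \cong CQ(P) = \tP$. You instead invoke the four-term sequence (\ref{eqn:long}) together with $\pi_1(P) = 0$, so that $\rho_P$ is a monomorphism with image $P^{\cB}$ by Lemma \ref{lem:isos}. This is valid (the standing hypothesis of Subsection \ref{subsec:proj} is exactly the assumption under which (\ref{eqn:long}) holds, and Lemma \ref{lem:fund} does not depend on Lemma \ref{lem:proj}, so there is no circularity), but note that it silently routes through Lemma \ref{lem:fund}, i.e., the identification $Y_1 \cong \pi_1(X)$, whereas the paper's argument needs only the characterization of the image of $C$ and is in that sense more self-contained. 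Your closing remark actually describes the paper's proof as the ``longer route''; in the source it is the shorter one, though both are only a few lines.
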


\begin{proof}
Recall that $\pi_0$ is left adjoint to the inclusion
of $\Pro(\cB)$ in $\Pro(\cA)$. It follows
that $\pi_0(P)$ is projective in $\Pro(\cB)$, 
and hence in $\Pro(\cA)$ as well. This yields a compatible 
isomorphism $P \cong P^{\cB} \oplus \pi_0(P)$.
In particular, $P^{\cB}$ is projective, and hence in the 
essential image of $C$ by (\ref{eqn:homext}). So the
adjunction map $CQ(P^{\cB}) \to P^{\cB}$ is an 
isomorphism. As 
$CQ(P^{\cB}) \stackrel{\cong}{\to} CQ(P) = \tP$,
this completes the proof.
\end{proof}

\begin{corollary}\label{cor:lifting}
Let $f: X \to Y$ be an epimorphism in $\Pro(\cA)$, 
where $Y$ is an object of $\Pro(\cB)$. Then there exists 
a subobject $Y'$ of $X$ such that $Y' \in \Pro(\cB)$ 
and the composition $Y' \to X \to Y$ is an epimorphism.
\end{corollary}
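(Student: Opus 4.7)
The plan is to reduce to the projective case via a projective cover of $X$ and then use the direct-sum decomposition supplied by Lemma \ref{lem:proj}. So I would first choose an essential epimorphism $p : P \to X$ with $P \in \Pro(\cA)$ projective; such a projective cover exists by \cite[II.6.Thm.~2]{Gabriel}, as already used in the proof of Lemma \ref{lem:sc}. Then $f \circ p : P \to Y$ is an epimorphism from a projective.

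Next I would invoke Lemma \ref{lem:proj} to write $P \cong P^{\cB} \oplus \pi_0(P)$ compatibly with the canonical map $P \to \pi_0(P)$. Since $Y \in \Pro(\cB)$ and $P^{\cB} \in {^\perp \Pro(\cB)}$, the restriction of $f \circ p$ to the summand $P^{\cB}$ vanishes. It follows that the restriction of $f \circ p$ to the summand $\pi_0(P) \subset P$ is itself an epimorphism onto $Y$.

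Finally, I would set $Y' := p(\pi_0(P)) \subset X$, the image of the summand $\pi_0(P)$ under $p$. As a quotient of the object $\pi_0(P) \in \Pro(\cB)$, the subobject $Y'$ lies in $\Pro(\cB)$ (recall that $\Pro(\cB)$ is a Serre subcategory of $\Pro(\cA)$, as noted in Subsection \ref{subsec:pro-artinian}). The composition $Y' \hookrightarrow X \to Y$ receives the epimorphism $\pi_0(P) \twoheadrightarrow Y$ of the previous step, so it is an epimorphism as required.

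There is no real obstacle here: the only nontrivial input is Lemma \ref{lem:proj}, which packages the decomposition of a projective into its $\cB$-orthogonal part and its $\pi_0$-part, together with the fact that there are no nonzero morphisms from ${^\perp\Pro(\cB)}$ to $\Pro(\cB)$. A minor sanity check is that ``image'' makes sense in the pro-artinian (equivalently, co-Grothendieck) category $\Pro(\cA)$; this is standard, since $\Pro(\cA)^{\op}$ is a Grothendieck category.
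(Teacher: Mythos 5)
Your proof is correct and follows essentially the same route as the paper: reduce to the case of a projective, apply Lemma \ref{lem:proj} to split off the summand $\pi_0(P)$, and take its image in $X$. The only cosmetic difference is that you deduce surjectivity of $(f\circ p)|_{\pi_0(P)}$ from the vanishing of $\Hom(P^{\cB},Y)$, whereas the paper argues via the epimorphism $\pi_0(f)$ and the isomorphism $\gamma_Y$; both are immediate consequences of the same decomposition.
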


\begin{proof}
We may assume that $X$ is projective. By Lemma
\ref{lem:proj}, we may then choose
an isomorphism $X \cong \tX \oplus \pi_0(X)$
compatibly with $\gamma_X : X \to \pi_0(X)$.
Since $\pi_0(f) : \pi_0(X) \to \pi_0(Y)$ 
is an epimorphism, and $\gamma_Y : Y \to \pi_0(Y)$ 
is an isomorphism, the statement holds with
$Y' = \pi_0(X)$.
\end{proof}

The above corollary asserts that the pair 
$(\Pro(\cA),\Pro(\cB))$ satisfies the \emph{lifting property}
introduced in \cite[\S 2.2]{Br17b}. Thus, this property holds 
for the pair $(\cA,\cB)$ as well. Conversely, if $(\cA,\cB)$
satisfies the lifting property, then every projective object
in $\Pro(\cB)$ is projective in $\Pro(\cA)$ by
\cite[Lem.~2.14]{Br17b}.

Next, recall from \cite[V.2.4]{DG} that every projective object
of $\Pro(\cA)$ is a product of indecomposable projectives, 
unique up to reordering; moreover, the indecomposable
projectives are projective covers of objects of $\cA$. Also,
given $X \in \Pro(\cA)$ such that $Q(X)$ is projective
in $\Pro(\cA/\cB)$, the adjunction map $\rho: \tX = CQ(X) \to X$ 
is the projective cover of $X$ (indeed, $C$ sends projectives 
to projectives, and $\rho$ is essential by Lemma 
\ref{lem:isos}). Together with Lemma \ref{lem:proj}, this yields
the following result (see also \cite[III.3.Cor.~2]{Gabriel}):

\begin{corollary}\label{cor:indproj} 
The indecomposable projectives of $\Pro(\cA)$ are exactly
those of $\Pro(\cB)$ and the $\tX$, where 
$X \in {^{\perp}\Pro(\cB)}$ and $Q(X)$ is indecomposable
projective in $\Pro(\cA/\cB)$. 
\end{corollary}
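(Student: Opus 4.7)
The plan is to verify both directions of the characterization by combining Lemma~\ref{lem:proj} with the properties of the adjoint pair $(C,Q)$. For the sufficient direction, note first that if $P$ is indecomposable projective in $\Pro(\cB)$, then $P$ is projective in $\Pro(\cA)$ by the standing hypothesis of the subsection, and it remains indecomposable because $\Pro(\cB) \hookrightarrow \Pro(\cA)$ is fully faithful, so its endomorphism ring does not change. If instead $X \in {^{\perp}\Pro(\cB)}$ with $Q(X)$ indecomposable projective in $\Pro(\cA/\cB) \cong \Pro(\cA)/\Pro(\cB)$, then $\tX = CQ(X)$ is projective in $\Pro(\cA)$ because $C$ sends projectives to projectives, as recorded earlier in Subsection~\ref{subsec:pro-artinian}. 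Indecomposability will follow from the natural isomorphism
\[
\End_{\Pro(\cA)}(CQ(X)) \cong \Hom_{\Pro(\cA/\cB)}(Q(X), QCQ(X)) \cong \End_{\Pro(\cA/\cB)}(Q(X))
\]
coming from the adjunction $C \dashv Q$ together with $QC \cong \id$ (which holds because $C$ is fully faithful, as the counit $CQ(Y) \to Y$ has kernel and cokernel in $\Pro(\cB)$ and $Q$ kills $\Pro(\cB)$).

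For the converse, let $P$ be any indecomposable projective in $\Pro(\cA)$. Lemma~\ref{lem:proj} gives $P \cong P^{\cB} \oplus \pi_0(P)$, and indecomposability forces exactly one summand to vanish. If $P = \pi_0(P) \in \Pro(\cB)$, then $P$ is projective in $\Pro(\cB)$ since $\pi_0$ carries projectives of $\Pro(\cA)$ to projectives of $\Pro(\cB)$, and it is indecomposable there, again by fullness of the inclusion. If instead $P = P^{\cB} \in {^{\perp}\Pro(\cB)}$, then Lemma~\ref{lem:proj} yields $P \cong \tP = CQ(P)$, so setting $X := P$ it only remains to show that $Q(P)$ is indecomposable projective in $\Pro(\cA/\cB)$.

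The central technical step, and the one I expect to require the most care, is the projectivity of $Q(P)$. I would establish it by the following descent argument: pick any epimorphism $R \to Q(P)$ in $\Pro(\cA/\cB)$ with $R$ projective (available because the quotient has enough projectives), apply $C$, and compose with the identification $CQ(P) = P$ to obtain a morphism $CR \to P$ in $\Pro(\cA)$. Its cokernel is killed by the exact functor $Q$, hence lies in $\Pro(\cB)$; but $P \in {^{\perp}\Pro(\cB)}$ admits no nonzero morphism to an object of $\Pro(\cB)$, so the cokernel vanishes and $CR \to P$ is an epimorphism of projectives. This splits because $P$ is projective; writing $CR \cong P \oplus K$ and applying $Q$ exhibits $Q(P)$ as a direct summand of $R \cong Q(P) \oplus QK$, hence projective. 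Indecomposability of $Q(P)$ follows at once from the endomorphism-ring identification of the first paragraph applied to $X = P$, so this completes the characterization. The delicate point is really just ensuring that the cokernel of $CR \to P$ lies in $\Pro(\cB)$ so that the $^{\perp}\Pro(\cB)$-hypothesis can be invoked, which rests on exactness of $Q$ together with $QC \cong \id$.
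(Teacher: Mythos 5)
Your proof is correct, but it takes a more self-contained route than the paper. The paper deduces the corollary almost entirely from cited structure theory: it invokes \cite[V.2.4]{DG} (every projective of $\Pro(\cA)$ is a product of indecomposable projectives, which are projective covers of artinian objects), the observation that $\rho : \tX \to X$ is the projective cover of $X$ whenever $Q(X)$ is projective (using Lemma \ref{lem:isos} for essentiality), and Lemma \ref{lem:proj}, with a pointer to \cite[III.3.Cor.~2]{Gabriel} for the underlying classification of projectives as products of projectives of $\Pro(\cB)$ with objects $C(R)$, $R$ projective in the quotient. You instead argue directly from the adjunction $C \dashv Q$: full faithfulness of $C$ gives $\End_{\Pro(\cA)}(CQ(X)) \cong \End_{\Pro(\cA/\cB)}(Q(X))$, which transfers indecomposability in both directions, and your splitting argument (an epimorphism $R \to Q(P)$ yields $CR \to P$ with cokernel in $\Pro(\cB)$, hence zero since $P \in {^{\perp}\Pro(\cB)}$, hence split, hence $Q(P)$ a summand of $R$) is a from-scratch proof that $Q$ preserves projectivity on ${^{\perp}\Pro(\cB)}$ --- a fact the paper would extract from the dual of \cite[III.3.Cor.~3]{Gabriel}, as it does in the proof of Lemma \ref{lem:compatible}. (A minor shortcut you could have taken: $C$ is a left adjoint, so it preserves epimorphisms outright, making the cokernel computation unnecessary.) Both arguments pivot on the dichotomy $P \cong P^{\cB} \oplus \pi_0(P)$ from Lemma \ref{lem:proj}. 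What your version buys is independence from the projective-cover and product-decomposition machinery of \cite{DG} and \cite{Gabriel}; what the paper's version buys is the extra information, used later (e.g.\ in Lemma \ref{lem:ess} and Proposition \ref{prop:projind}), that these indecomposables are precisely projective covers.
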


The latter indecomposable projectives can be constructed 
as follows: 

\begin{lemma}\label{lem:ess}
Let $X \in {^{\perp}\Pro(\cB)}$.

\begin{enumerate}

\item[{\rm (i)}] Consider an exact sequence in $\Pro(\cA)$,
\[ 0 \longrightarrow Z \longrightarrow Y
\stackrel{f}{\longrightarrow} X \longrightarrow 0. \] 
Then $f$ is essential if and only if $Z \in \Pro(\cB)$ and 
$Y \in {^{\perp} \Pro(\cB)}$.

\item[{\rm (ii)}] Assume that $Q(X)$ is projective in 
$\Pro(\cA)/\Pro(\cB)$. Then the essential epimorphisms
$f : Y \to X$, where $\Ker(f) \in \cB$, 
form a filtered inverse system with limit 
the projective cover of $X$ in $\Pro(\cA)$.

\end{enumerate}

\end{lemma}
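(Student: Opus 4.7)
The plan is to prove (i) by verifying both directions directly, and (ii) by identifying $\tX$ with the projective cover of $X$ and expressing it as an explicit filtered inverse limit of essential covers with $\cB$-kernel.

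For the ``if'' direction of (i), I take any subobject $W \subset Y$ with $f(W) = X$; then $W + Z = Y$, so $Y/W$ is a quotient of $Z$ and lies in $\Pro(\cB)$. Since $Y \in {^{\perp}\Pro(\cB)}$, Lemma \ref{lem:perp}(i) forces $Y/W = 0$, so $W = Y$. For the ``only if'' direction, Lemma \ref{lem:perp}(iii) applied to the essential epimorphism $f: Y \to X$ together with $X \in {^{\perp}\Pro(\cB)}$ yields $Y \in {^{\perp}\Pro(\cB)}$ immediately. To obtain $Z \in \Pro(\cB)$, I plan to lift the universal essential map $\rho : \tX \to X$ of Lemma \ref{lem:isos} through $f$: in the setting of Corollary \ref{cor:indproj}, where $Q(X)$ is projective in $\Pro(\cA)/\Pro(\cB)$, $\tX = CQ(X)$ is projective in $\Pro(\cA)$ (since $C$ preserves projectives), so $\rho$ lifts to $\tilde f : \tX \to Y$ with $f\tilde f = \rho$. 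Essentiality of $f$ forces $\tilde f$ to be an epimorphism (its image surjects onto $X$ via $f$, hence equals $Y$), and the exact sequence (\ref{eqn:long}) then gives $Z \cong Y_1/\Ker(\tilde f)$ with $Y_1 = \pi_1(X) \in \Pro(\cB)$; the Serre property of $\Pro(\cB)$ delivers $Z \in \Pro(\cB)$.

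For (ii), under the same projectivity hypothesis on $Q(X)$, $\tX \to X$ is the projective cover of $X$ by the above. I would write $Y_1 = \pi_1(X) \in \Pro(\cB)$ as a filtered inverse limit via its subobjects $\{Y_{1,i}\}$ with $Y_1/Y_{1,i} \in \cB$, observing that this system is closed under intersections (since $Y_1/(Y_{1,i_1} \cap Y_{1,i_2})$ embeds in $Y_1/Y_{1,i_1} \oplus Y_1/Y_{1,i_2}$, which lies in $\cB$ by the Serre property) and has trivial intersection. Setting $Y^{(i)} := \tX/Y_{1,i}$, each $Y^{(i)}$ lies in ${^{\perp}\Pro(\cB)}$ (any morphism from $Y^{(i)}$ to an object of $\Pro(\cB)$ would lift to a morphism $\tX \to \Pro(\cB)$, hence vanishes), so part (i) applied to the surjection $Y^{(i)} \to X$ with kernel $Y_1/Y_{1,i} \in \cB$ shows it is essential. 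Exactness of inverse limits in $\Pro(\cA)$ and $\bigcap_i Y_{1,i} = 0$ then yield $\lim_i Y^{(i)} = \tX$. Conversely, for any essential $f : Y \to X$ with $\Ker(f) \in \cB$, the same lifting produces an epi $\tX \to Y$ whose kernel $K \subset Y_1$ satisfies $Y_1/K \cong \Ker(f) \in \cB$; hence $K$ belongs to the system $\{Y_{1,i}\}$ and $Y \cong Y^{(i)}$.

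The main technical point is the ``only if'' direction of (i) for the conclusion $Z \in \Pro(\cB)$: this really rests on the projectivity of $\tX$ allowing $\rho$ to be lifted through $f$, which is why the implicit context of Corollary \ref{cor:indproj} (with $Q(X)$ projective) is essential for the argument. Once this lifting is in place, the rest of both parts is formal manipulation with projective covers, exact sequences, and the Serre property of $\Pro(\cB)$.
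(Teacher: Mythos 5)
Your proposal is correct, and for the one genuinely delicate step it takes a different and arguably cleaner route than the paper. The ``if'' direction of (i) and the deduction $Y \in {^{\perp}\Pro(\cB)}$ coincide with the paper's argument. For the conclusion $Z \in \Pro(\cB)$, the paper uses the lifting property to write $Z = Z^{\cB} + W$ with $W \in \Pro(\cB)$, passes to $\Pro(\cA)/\Pro(\cB)$, splits the resulting sequence using the projectivity of $Q(X)$, and then works with representatives of morphisms in the quotient category to force $Z/W \in \Pro(\cB)$; you instead lift $\rho : \tX \to X$ through $f$ using the projectivity of $\tX = CQ(X)$ in $\Pro(\cA)$, note that essentiality makes the lift an epimorphism, and exhibit $Z$ directly as a quotient of $\pi_1(X) \in \Pro(\cB)$ --- shorter, and it buys you part (ii) almost for free. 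Both arguments rely on $Q(X)$ being projective, a hypothesis that formally appears only in part (ii); the paper uses it silently in its proof of (i) (``As $X/f(W) \cong X$ is projective in the latter category\ldots''), whereas you flag the reliance explicitly, so this is a shared feature rather than a defect of your proof. For (ii), your parametrization of the essential covers by the subobjects $Y_{1,i} \subset \pi_1(X)$ with artinian quotient is essentially the second half of the paper's argument (its family $(K_i)$ of subobjects of $\Ker(\rho)$); the paper establishes filteredness separately via fiber products, which you replace by closure of the $Y_{1,i}$ under intersection --- equivalent in substance. The only points you leave tacit are the uniqueness of the lift $\tX \to Y$ (which follows since $\Ker(f) \in \Pro(\cB)$ and $\tX \in {^{\perp}\Pro(\cB)}$, and is what makes the identification of the two inverse systems functorial) and the reason $\bigcap_i Y_{1,i} = 0$ (namely that $\pi_1(X)$, being in $\Pro(\cB)$, is the limit of its artinian quotients); both are routine and the paper glosses over the latter as well.
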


\begin{proof}
(i) Note that $f$ induces an epimorphism
$Y/Y^{\cB} \to X/f(X^{\cB})$. Since 
$Y/Y^{\cB} \in \Pro(\cB)$ and
 $X/f(X^{\cB})\in {^{\perp} \Pro(\cB)}$ ,
we must have $X/f(X^{\cB}) = 0$, i.e., the
composition $Y^{\cB} \to Y \to X$ is an
epimorphism.

Assume that $f$ is essential. Then $Y^{\cB} = Y$,
i.e., $Y \in {^{\perp} \Pro(\cB)}$. Also, by the
lifting property, we have $Z = Z^{\cB} + W$
for some subobject $W$ of $Z$ such that 
$W \in \Pro(\cB)$. This yields an exact sequence
\[ 0 \longrightarrow Z/W \longrightarrow Y/W 
\longrightarrow X/f(W) \longrightarrow 0 \]
in $\Pro(\cA)$, and hence in $\Pro(\cA/\cB)$. 
As $X/f(W) \cong X$ is projective in the latter category,
this sequence is split by some $g \in \Hom_{\Pro(\cA/\cB)}(X,Y/W)$.
Since $X \in {^{\perp} \Pro(\cB)}$, we may
represent $g$ by $h \in \Hom_{\Pro(\cA)}(X,Y/W')$
for some $W' \subset Y$ such that $W \subset W'$ and
$W' \in \Pro(\cB)$. 
Denote by $p$ the composition of morphisms in $\Pro(\cA)$
\[ X \stackrel{h}{\longrightarrow} Y/W' \longrightarrow X/f(W') \]
(where the morphism on the right is induced by $f$),
and by $q : X \to X/f(W')$ the quotient morphism in $\Pro(\cA)$. 
Then $p$ represents the identity endomorphism of $X$ in 
$\Pro(\cA)/\Pro(\cB)$; thus, $p - q$ represents 
zero there. Using again the assumption that 
$X \in {^{\perp} \Pro(\cB)}$, it follows that $p - q$ is zero
in $\Pro(\cA)$. In particular, the composition $h(X) \to Y/W' \to X$ is
an epimorphism. Since $f$ is essential, $h$ must be
an epimorphism as well. So $g$ is an isomorphism in 
$\Pro(\cA)/\Pro(\cB)$, hence $Z/W \in \Pro(\cB)$. We conclude
that $Z \in \Pro(\cB)$.

Conversely, assume that $Z \in \Pro(\cB)$ and 
$Y \in {^{\perp} \Pro(\cB)}$. Let $Y' \subset Y$ 
such that the composition $Y' \to Y \to X$ is
an epimorphism. Then $Y = Y' + Z$, hence
$Z \to Y \to Y/Y'$ is an epimorphism as well. 
So $Y/Y'$ is an object of $\Pro(\cB)$, and hence 
is zero. We conclude that $f$ is essential.

(ii) Consider two exact sequences
\[ 0 \longrightarrow Z_i \longrightarrow Y_i
\stackrel{f_i}{\longrightarrow}
X \longrightarrow 0 \quad (i = 1,2), \]
where $f_1,f_2$ are essential and $Z_1,Z_2 \in \cB$. 
Then the induced morphism
\[ f : Y_1 \times_X Y_2  =: Y \longrightarrow X \] 
is an epimorphism with kernel $Z_1 \times Z_2$.
In view of (i), it follows that the composition
$Y^{\cB} \to Y \to X$ is an essential epimorphism. Thus, 
these essential epimorphisms form a
filtered inverse system.

Given such an essential epimorphism
$f : Y \to X$, the map $\rho : \tX \to X$ lifts to a morphism
$\varphi_Y :  \tX \to Y$. Moreover,
$\varphi_Y$ is unique (since $\Ker(f) \in \Pro(\cB)$
and $\tX \in {^{\perp} \Pro(\cB)}$), and is an epimorphism 
as well. So we obtain an epimorphism 
\[ \varphi : \tX \longrightarrow 
\lim_{\leftarrow} Y \]
with an obvious notation. To show that $\varphi$ is
a monomorphism, consider the family $(K_i)$
of subobjects of $\Ker(\rho)$ such that 
$\Ker(\rho)/K_i \in \cB$. Then $\tX/K_i \in \cA$ and
$\rho$ factors through an essential epimorphism
$\tX/K_i \to X$; the corresponding morphism
$\varphi_i : \tX \to \tX/K_i$ is just the quotient
morphism. Since $\cap K_i$ is zero, this
completes the proof. 
\end{proof}

\subsection{Compatibility properties}
\label{subsec:compatibility}

Throughout this subsection, we consider an artinian 
abelian category $\cA$, a Serre subcategory $\cB$ 
such that the pair $(\cA,\cB)$ satisfies the lifting 
property, and in addition a Serre subcategory 
$\cC$ of $\cB$. We first relate the homotopy functors
associated to the three pairs $(\cA,\cB)$, $(\cB,\cC)$
and $(\cA,\cC)$:

\begin{lemma}\label{lem:comp}
Let $X \in \Pro(\cA)$.

\begin{enumerate}

\item[{\rm (i)}]
There is a natural isomorphism
$\pi_0^{\cA,\cC}(X) \stackrel{\cong}{\to}
\pi_0^{\cB,\cC} (\pi_0^{\cA,\cB}(X))$. 

\item[{\rm (ii)}]
There is a spectral sequence
$\pi_i^{\cB,\cC} (\pi_j^{\cA,\cB}(X)) \Rightarrow
\pi_{i+j}^{\cA,\cC}(X)$. 

\end{enumerate}

\end{lemma}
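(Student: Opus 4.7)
The plan is to establish (i) by a direct adjoint-functor argument and then deduce (ii) from (i) via the Grothendieck spectral sequence for the composition of two right exact functors.

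For (i), recall from Subsection \ref{subsec:pro-artinian} that whenever $\cD$ is a Serre subcategory of an artinian abelian category $\cE$, the functor $\pi_0^{\cE,\cD}$ is left adjoint to the inclusion $\Pro(\cD) \hookrightarrow \Pro(\cE)$. Applied to the chain $\cC \subset \cB \subset \cA$, the composition $\pi_0^{\cB,\cC} \circ \pi_0^{\cA,\cB}$ is therefore left adjoint to the composed inclusion $\Pro(\cC) \hookrightarrow \Pro(\cB) \hookrightarrow \Pro(\cA)$, which is just the inclusion of $\Pro(\cC)$ into $\Pro(\cA)$. Since $\pi_0^{\cA,\cC}$ is also left adjoint to the latter, by uniqueness of left adjoints we obtain a canonical natural isomorphism $\pi_0^{\cA,\cC} \cong \pi_0^{\cB,\cC} \circ \pi_0^{\cA,\cB}$, proving (i).

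For (ii), I apply the Grothendieck spectral sequence for the left derived functors of composable right exact functors, with $F := \pi_0^{\cA,\cB}$ and $G := \pi_0^{\cB,\cC}$. By (i) we have $G \circ F = \pi_0^{\cA,\cC}$, so the spectral sequence takes the desired form
\[ \pi_i^{\cB,\cC}\bigl(\pi_j^{\cA,\cB}(X)\bigr) \Rightarrow \pi_{i+j}^{\cA,\cC}(X), \]
provided $F$ carries projectives to $G$-acyclic objects. To verify this, note that the inclusion $\Pro(\cB) \hookrightarrow \Pro(\cA)$ is exact, since $\Pro(\cB)$ is a Serre subcategory of $\Pro(\cA)$ (recalled at the start of Subsection \ref{subsec:pro-artinian}). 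Hence its left adjoint $F$ preserves projectives, so $F(P)$ is projective in $\Pro(\cB)$ whenever $P \in \Pro(\cA)$ is projective; in particular $\pi_i^{\cB,\cC}(F(P)) = 0$ for all $i \geq 1$, as required.

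I foresee no serious obstacle. The existence of enough projectives in $\Pro(\cA)$, needed for the Cartan--Eilenberg resolution underlying the Grothendieck spectral sequence, is guaranteed by \cite[II.6.Thm.~2]{Gabriel} (already invoked in the proof of Lemma \ref{lem:sc}). Beyond formal category theory, the single substantive ingredient is the preservation of projectives by $F$, which in turn rests only on the exactness of its right adjoint.
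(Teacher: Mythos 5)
Your proof is correct and follows essentially the same route as the paper: part (i) is the standard uniqueness-of-left-adjoints observation (the paper simply says it "follows readily from the definitions"), and part (ii) is the Grothendieck spectral sequence for $\pi_0^{\cB,\cC}\circ\pi_0^{\cA,\cB}$, justified exactly as in the paper by noting that $\pi_0^{\cA,\cB}$ sends projectives to projectives (a fact already recorded in Subsection \ref{subsec:pro-artinian} as a consequence of the adjunction) and that projectives of $\Pro(\cB)$ are acyclic for $\pi_0^{\cB,\cC}$.
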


\begin{proof}
(i) This follows readily from the definitions.

(ii) Recall that $\pi_0^{\cA,\cB} : \Pro(\cA) \to \Pro(\cB)$
sends projectives to projectives; also, every projective
in $\Pro(\cB)$ is obviously acyclic for $\pi_0^{\cB,\cC}$.
In view of (i), this yields a Grothendieck spectral sequence
as stated.
\end{proof}

\begin{remark}\label{rem:comp}
When $X \in \cB$, the above spectral sequence yields 
isomorphisms 
$\pi_i^{\cB,\cC}(X) \stackrel{\cong}{\to} \pi_i^{\cA,\cC}(X)$
for all $i \geq 0$, in view of Lemma \ref{lem:pro}. 
Alternatively, these isomorphisms follow from the obvious
equality $\pi_0^{\cB,\cC}(X) = \pi_0^{\cA,\cC}(X)$, 
since evey projective object of $\Pro(\cB)$ is projective
in $\Pro(\cA)$.

On the other hand, when $X \in {^{\perp}\Pro(\cB)}$, 
the first terms of the spectral sequence 
yield a natural isomorphism
\[ \pi_1^{\cA,\cC}(X) \stackrel{\cong}{\longrightarrow} 
\pi_0^{\cB,\cC}(\pi_1^{\cA,\cB}(X)). \]
This can also be seen directly: consider the universal 
extension of $X$ by an object of $\Pro(\cB)$,
\[ 0 \longrightarrow Y \longrightarrow \tX 
\longrightarrow X \longrightarrow 0, \]
where $Y := \pi_1^{\cA,\cB}(X)$.
Then one may readily check that the induced 
exact sequence
\[ 0 \longrightarrow Y/Y^{\cC}
\longrightarrow \tX/Y^{\cC} \longrightarrow X
\longrightarrow 0 \]
is the universal extension of $X$ by an object
of $\Pro(\cC)$; thus, 
$Y/Y^{\cC} \cong \pi_1^{\cA,\cB}(X)$. But also
$Y/Y^{\cC} = \pi_0^{\cB,\cC}(\pi_1^{\cA,\cB}(X))$. 
\end{remark}

Next, we investigate the behavior of the homotopy groups 
$\pi_i^{\cA, \cB}$ under the quotient functor
\[ Q^{\cA,\cC} : \Pro(\cA) \longrightarrow \Pro(\cA)/\Pro(\cC). \]
We will need the following observation:

\begin{lemma}\label{lem:standard}
Assume that the pair $(\cA,\cC)$ satisfies the
lifting property. Then $\cB/\cC$ is a Serre
subcategory of $\cA/\cC$, and the quotient
$(\cA/\cC)/(\cB /\cC)$ is naturally equivalent to 
$\cA/\cC$. Moreover, the pair $(\cA/\cC,\cB/\cC)$ 
satisfies the lifting property.
\end{lemma}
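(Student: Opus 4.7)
The plan is to establish the three assertions in turn, leveraging the exactness and universal property of the Gabriel localization $Q = Q^{\cA,\cC} : \cA \to \cA/\cC$.

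For the Serre subcategory claim, $\cB/\cC$ is by definition the essential image of $\cB$ under $Q$, and I would verify closure under subobjects, quotients, and extensions directly. For subobjects, a monomorphism $Q(Z) \hookrightarrow Q(Y)$ with $Y \in \cB$ is represented, via the calculus of fractions for $\cC$ (see \cite[III.2]{Gabriel}), by a morphism $f : Z' \to Y/Y'$ in $\cA$ with $Z/Z' \in \cC$, $Y' \in \cC$ and $\Ker(f) \in \cC$; its image $\Im(f) \subset Y/Y'$ lies in $\cB$ since $Y/Y' \in \cB$ and $\cB$ is Serre, and $Q(\Im(f)) \cong Q(Z)$. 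Quotients are symmetric, and closure under extensions is immediate from exactness of $Q$. For the equivalence $(\cA/\cC)/(\cB/\cC) \simeq \cA/\cB$ (the target as typeset is an apparent misprint for $\cA/\cB$), I would invoke the iterated-quotient theorem of Gabriel (see \cite[III.3--4]{Gabriel}): the composite $\cA \to \cA/\cC \to (\cA/\cC)/(\cB/\cC)$ annihilates $\cB$ and inverts exactly those morphisms whose kernel and cokernel lie in $\cB$, so by the universal property of $\cA/\cB$ it induces the asserted natural equivalence.

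The essential step is the lifting property for $(\cA/\cC, \cB/\cC)$. Let $\phi : \bar X \to \bar Y$ be an epimorphism in $\cA/\cC$ with $\bar Y \in \cB/\cC$; write $\bar X = Q(X)$ and $\bar Y = Q(Y)$ with $X \in \cA$ and $Y \in \cB$. Represent $\phi$ by a morphism $f : X_0 \to Y/Y_0$ in $\cA$, with $X_0 \subset X$, $X/X_0 \in \cC$ and $Y_0 \in \cC$; the condition that $\phi$ is an epimorphism amounts to the cokernel of $f$ lying in $\cC$. Set $I := \Im(f) \subset Y/Y_0$: then $I \in \cB$ by the Serre property, $(Y/Y_0)/I \in \cC$ so that $Q(I) \cong \bar Y$, and $f$ corestricts to an epimorphism $X_0 \twoheadrightarrow I$ in $\cA$ whose target lies in $\cB$. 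Applying the lifting property for the pair $(\cA,\cB)$, which is the standing hypothesis of this subsection, yields a subobject $W \subset X_0$ with $W \in \cB$ such that $W \to I$ is an epimorphism. Then $Q(W)$ is a subobject of $\bar X$ lying in $\cB/\cC$ and mapping epimorphically onto $\bar Y$, completing the proof.

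The main obstacle is bookkeeping rather than conceptual: one must represent morphisms in the quotient by fractions with controlled cokernels, and track that $\cB$-membership is preserved under passing to images. I note that the lifting hypothesis for $(\cA,\cC)$ in the lemma does not explicitly enter the argument above; I expect it is imposed to ensure that the cosection functor $C^{\cA,\cC}$ and the identification $\Pro(\cA/\cC) \simeq \Pro(\cA)/\Pro(\cC)$ are well behaved when the formalism of Subsection \ref{subsec:pro-artinian} is subsequently applied to $(\cA/\cC, \cB/\cC)$.
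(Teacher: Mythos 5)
Your proof is correct and follows essentially the same strategy as the paper's: represent morphisms and short exact sequences of $\cA/\cC$ by data in $\cA$ with kernels and cokernels in $\cC$, use that $\cB$ is a Serre subcategory containing $\cC$, get the equivalence $(\cA/\cC)/(\cB/\cC) \cong \cA/\cB$ from the universal property of quotient functors (you are right that the ``$\cA/\cC$'' in the statement is a misprint for $\cA/\cB$, as the paper's own proof confirms), and establish the lifting property for $(\cA/\cC,\cB/\cC)$ by reducing to an honest epimorphism in $\cA$ onto an object of $\cB$ and invoking the lifting property of $(\cA,\cB)$. The one substantive divergence is the representation lemma: the paper uses \cite[Lem.~2.7]{Br17b}, which under the lifting hypothesis for $(\cA,\cC)$ represents a morphism $Q(X) \to Q(Y)$ by an actual morphism $X \to Y/Y'$ \emph{without shrinking the source}, and then reduces to an epimorphism by passing to $X^{\cC} \to Y^{\cC}$; you instead use the general Gabriel representation $f : X_0 \to Y/Y_0$ with $X/X_0 \in \cC$ and pass to $I = \Im(f)$. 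Since $Q(W)$ is still a subobject of $Q(X)$ for $W \subset X_0$, your version works and, as you observe, never consumes the lifting hypothesis for $(\cA,\cC)$ --- your guess about why that hypothesis appears is slightly off, though: the cosection and the identification $\Pro(\cA/\cC) \cong \Pro(\cA)/\Pro(\cC)$ need no lifting property, and the hypothesis is what feeds \cite[Lem.~2.7]{Br17b} in the paper's proof here and, more essentially, Lemma \ref{lem:compatible}, where one needs projectives of $\Pro(\cC)$ to be projective in $\Pro(\cA)$ so that $Q^{\cA,\cC}$ preserves projectives. Two small points to tighten: ``closure under extensions is immediate from exactness of $Q$'' silently uses that a short exact sequence in $\cA/\cC$ is isomorphic to the image of one in $\cA$ (this is \cite[Lem.~2.9]{Br17b}, which the paper cites for exactly this step), and there you also need the repleteness you established, namely that $X \in \cA$ with $Q(X) \in \cB/\cC$ forces $X \in \cB$.
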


\begin{proof}
Let $X \in \cB$, $Y \in \cA$, and let $\varphi : X \to Y$
be an isomorphism in $\cA/\cC$. By \cite[Lem.~2.7]{Br17b}, 
there exists a subobject $Y' \subset Y$ in $\cA$
such that $Y' \in \cC$ and $\varphi$ is represented by 
a morphism $f: X \to Y/Y'$ in  $\cA$. Then $\Ker(f)$ and 
$\Coker(f)$ are objects of $\cC$ in view of 
\cite[III.1.Lem.~2]{Gabriel}. Since $\cB$ is a Serre subcategory 
of $\cA$ containing $\cC$, it follows that $Y \in \cB$. Thus,
 $\cB/\cC$ is a strict subcategory of $\cA/\cC$.

Next, let $0 \to X_1 \to X \to X_2 \to 0$ be an exact 
sequence in $\cA/\cC$. Then there exists 
a commutative diagram in that category
\[ \xymatrix{
0 \ar[r] & X_1 \ar[r] \ar[d]
& X \ar[r] \ar[d] & X_2 \ar[r] \ar[d] & 0 \\
0 \ar[r] & Y_1 \ar[r]  & Y \ar[r] & Y_2 \ar[r]  & 0, \\
} \]
where the vertical arrows are isomorphisms, and the
bottom sequence is the image of an exact sequence
in $\cA$ under the quotient functor $Q^{\cA,\cC}$
(see \cite[Lem.~2.9]{Br17b}). As a consequence,
$X \in \cB$ if and only if $X_1,X_2 \in \cB$. So
$\cB/\cC$ is a Serre subcategory of $\cA/\cC$.

The equivalence of categories
$(\cA/\cC)/(\cB/\cC) \cong \cA/\cB$
follows from the universal property of quotient functors.

We now check that $(\cA/\cC,\cB/\cC)$ satisfies
the lifting property. Let $\varphi : X \to Y$ be 
an epimorphism in $\cA/\cC$. In view of 
\cite[Lem.~2.7]{Br17b} again, replacing $Y$ with 
an isomorphic object in $\cA/\cC$, we may assume 
that $\varphi$ is represented by a morphism 
$f: X \to Y$ in $\cA$; then $\Coker(f)$ is an object 
of $\cC$ by \cite[III.1.Lem.~2]{Gabriel} again.
Next, we may replace $X,Y$ with $X^{\cC}, Y^{\cC}$,
and hence assume that $f$ is an epimorphism in 
$\cA$. Then there exists a subobject $Y'$ of $X$
such that $Y' \in \cB$ and the composition
$Y' \to X \to Y$ is an epimorphism in $\cA$,
hence in $\cA/\cC$.
\end{proof}

\begin{lemma}\label{lem:compatible}
With the assumption of Lemma \ref{lem:standard}, 
$\pi_i^{\cA/\cC,\cB/\cC}(X)$ is naturally isomorphic 
to the image of $\pi_i^{\cA, \cB}(X)$ in $\Pro(\cB/\cC)$, 
for any $i \geq 0$ and any object $X$ of $\Pro(\cA)$.
\end{lemma}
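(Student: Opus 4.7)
The plan is to establish the case $i = 0$ by a direct orthogonality argument, and then to bootstrap to all $i \ge 1$ by showing that the quotient functor $Q := Q^{\cA,\cC}$ sends projective objects of $\Pro(\cA)$ to projective objects of $\Pro(\cA/\cC)$; the main obstacle is the $i = 0$ case.

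For $i = 0$, I apply the exact functor $Q$ to the sequence $0 \to X^{\cB} \to X \to X_{\cB} \to 0$ in $\Pro(\cA)$ to obtain an exact sequence
\[ 0 \to Q(X^{\cB}) \to Q(X) \to Q(X_{\cB}) \to 0 \]
in $\Pro(\cA/\cC)$ whose right-hand term $Q(X_{\cB})$ is identified with $Q^{\cB,\cC}(\pi_0^{\cA,\cB}(X)) \in \Pro(\cB/\cC)$. By the uniqueness of the smallest subobject in Lemma \ref{lem:perp}(ii) applied to the pair $(\cA/\cC, \cB/\cC)$, the identification $\pi_0^{\cA/\cC,\cB/\cC}(Q(X)) \cong Q(X_{\cB})$ will follow once I verify $Q(X^{\cB}) \in {^{\perp}\Pro(\cB/\cC)}$. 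Writing any object of $\Pro(\cB/\cC)$ as $Q(Z')$ with $Z' \in \Pro(\cB)$ and replacing $Z'$ by its subobject $(Z')^{\cC}$ (which leaves $Q(Z')$ unchanged), I may assume $Z' \in {^{\perp}\Pro(\cC)}$. The adjunction $C^{\cA,\cC} \dashv Q^{\cA,\cC}$ gives
\[ \Hom_{\Pro(\cA/\cC)}(Q(X^{\cB}), Q(Z')) \cong \Hom_{\Pro(\cA)}(C^{\cA,\cC}Q^{\cA,\cC}(X^{\cB}), Z'), \]
and since $X^{\cB} \in {^{\perp}\Pro(\cC)}$ the universal cover sequence for $X^{\cB}$ relative to $\cC$ provides a left-exact sequence
\[ 0 \to \Hom(X^{\cB}, Z') \to \Hom(C^{\cA,\cC}Q^{\cA,\cC}(X^{\cB}), Z') \to \Hom(\pi_1^{\cA,\cC}(X^{\cB}), Z') \]
in which $\Hom(X^{\cB}, Z') = 0$ since $X^{\cB} \in {^{\perp}\Pro(\cB)}$ and $Z' \in \Pro(\cB)$, while $\Hom(\pi_1^{\cA,\cC}(X^{\cB}), Z') = 0$ since $\pi_1^{\cA,\cC}(X^{\cB}) \in \Pro(\cC)$ and $Z' \in {^{\perp}\Pro(\cC)}$. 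The reduction $Z' \in {^{\perp}\Pro(\cC)}$ is the key subtlety — without it, the third Hom above is typically nonzero and the argument collapses.

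For $i \ge 1$, let $P$ be projective in $\Pro(\cA)$; Lemma \ref{lem:proj} applied to the pair $(\cA,\cC)$ gives $P \cong P^{\cC} \oplus \pi_0^{\cA,\cC}(P)$ with $P^{\cC} = C^{\cA,\cC}(Q(P))$, so $Q(P) = Q(P^{\cC})$. Since $C^{\cA,\cC}$ is fully faithful and preserves epimorphisms (being a left adjoint), the adjunction yields
\[ \Hom_{\Pro(\cA/\cC)}(Q(P), M) \cong \Hom_{\Pro(\cA)}(P^{\cC}, C^{\cA,\cC}(M)), \]
and the right-hand side is exact in $M$ because $P^{\cC}$ is projective (as a summand of $P$); hence $Q(P)$ is projective in $\Pro(\cA/\cC)$. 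Choosing a projective resolution $P_\bullet \to X$ in $\Pro(\cA)$ then produces a projective resolution $Q(P_\bullet) \to Q(X)$ in $\Pro(\cA/\cC)$, and I conclude
\[ \pi_i^{\cA/\cC,\cB/\cC}(Q(X)) = H_i\bigl(\pi_0^{\cA/\cC,\cB/\cC}(Q(P_\bullet))\bigr) = H_i\bigl(Q^{\cB,\cC}(\pi_0^{\cA,\cB}(P_\bullet))\bigr) = Q^{\cB,\cC}(\pi_i^{\cA,\cB}(X)), \]
using the $i = 0$ case degreewise and the exactness of $Q^{\cB,\cC}$.
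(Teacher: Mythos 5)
Your reduction of the cases $i \geq 1$ to the case $i = 0$ (showing that $Q^{\cA,\cC}$ preserves projectives and applying $\pi_0$ degreewise to a projective resolution) is sound and is essentially the paper's own reduction, which invokes the dual of \cite[III.3.Cor.~3]{Gabriel} for the preservation of projectives. The gap is in your $i = 0$ argument, at the step where you claim $\Hom_{\Pro(\cA)}(\pi_1^{\cA,\cC}(X^{\cB}), Z') = 0$ ``since $\pi_1^{\cA,\cC}(X^{\cB}) \in \Pro(\cC)$ and $Z' \in {^{\perp}\Pro(\cC)}$''. The left orthogonal ${^{\perp}\Pro(\cC)}$ controls morphisms \emph{out of} $Z'$ into objects of $\Pro(\cC)$, not morphisms from objects of $\Pro(\cC)$ \emph{into} $Z'$; nothing prevents $Z'$ from containing a nonzero subobject lying in $\Pro(\cC)$. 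Concretely, take $\cA = \cC$, $\cB = \cL$, $\cC = \cF$, $X = X^{\cB} = A$ an abelian variety over $\bC$, and $Z' = \widetilde{\bG_m}$ the universal profinite cover of $\bG_m$: then $Z' \in \Pro(\cL)$ and $Z' = (Z')^{\cF}$, but $Z'$ contains $\varpi_1(\bG_m) \cong \widehat{\bZ}$ as a subobject, so $\Hom(\varpi_1(A), Z') \neq 0$. Thus the third term of your left-exact sequence does not vanish in general, and the vanishing of the middle term $\Hom(C^{\cA,\cC}Q^{\cA,\cC}(X^{\cB}), Z')$ --- which is exactly what you need --- is not established by your sequence. (The middle term does vanish, but your argument does not show it; the ``key subtlety'' you flag is pointing in the wrong direction.)

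The paper's proof of the $i = 0$ case sidesteps this entirely: by Lemma \ref{lem:perp} (i) it suffices to test $Q(X^{\cB})$ against \emph{artinian} objects $Y \in \cB$. Writing $X^{\cB} = \lim_{\leftarrow} X_i$ with $X_i \in \cA$ and the projections epimorphisms, any morphism $\varphi : Q(X^{\cB}) \to Y$ in $\Pro(\cA/\cC)$ is represented by a morphism $f_i : X_i \to Y/Y'$ in $\cA$ for some $Y' \subset Y$ with $Y' \in \cC$; since $Y/Y' \in \cB$, $X^{\cB} \in {^{\perp}\Pro(\cB)}$, and $X^{\cB} \to X_i$ is an epimorphism, one gets $f_i = 0$ and hence $\varphi = 0$. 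If you want to salvage your adjunction-based framework you need a different argument for the vanishing of $\Hom(C^{\cA,\cC}Q^{\cA,\cC}(X^{\cB}), Z')$; otherwise I would replace the orthogonality step by the representability argument just described.
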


\begin{proof}
Recall that every projective object in 
$\Pro(\cC)$ is projective in $\Pro(\cA)$. By
the dual statement of \cite[III.3.Cor.~3]{Gabriel}, 
it follows that the quotient functor $Q^{\cA,\cC}$ sends 
projectives to projectives. Thus, it suffices to 
check the assertion for $i = 0$.

Let $X \in \Pro(\cA)$ and consider the exact sequence
\[ 0 \longrightarrow X^{\cB} \longrightarrow X
\longrightarrow \pi_0^{\cA,\cB}(X) \longrightarrow 0 \]
in $\Pro(\cA)$, where $X^{\cB} \in {^{\perp}\Pro(\cB)}$.
This sequence is still exact in $\Pro(\cA/\cC)$; thus,
it suffices to show that 
$X^{\cB} \in {^{\perp}\Pro(\cB/\cC)}$. In view of
Lemma \ref{lem:perp}, it suffices in turn to show that 
every morphism $\varphi : X^{\cB} \to Y$ in 
$\Pro(\cA/\cC)$, where $Y \in \cB$, is zero.

In $\Pro(\cA)$, we have 
$X^{\cB} = \lim_{\leftarrow}  X_i$, where $X_i \in \cA$
and the projections $X^{\cB} \to X_i$ are epimorphisms.
Hence this also holds in $\Pro(\cA/\cC)$. Since
\[ \Hom_{\Pro(\cA/\cC)}( \lim_{\leftarrow}  X_i, Y)
= \lim_{\to} \Hom_{\cA/\cC}(X_i, Y), \]
we see that $\varphi$ is represented by a morphism
$\varphi_i : X_i \to Y$ in $\cA/\cC$. Using 
\cite[Lem.~2.7]{Br17b}, it follows that $\varphi$ is represented
by a morphism $f_i : X_i \to Y/Y'$  in $\cA$, for some
$Y' \subset Y$ such that $Y' \in \cC$. The composition
$X^{\cB} \to X_i \to Y/Y'$ is zero, since $Y/Y' \in \cB$.
So $f_i = 0$, and $\varphi = 0$.
\end{proof}

\section{Fundamental groups of commutative algebraic groups}
\label{sec:fundamental}

\subsection{The affine fundamental group}
\label{subsec:affine}

Let $k$ be a field. As in the introduction, we consider 
the artinian abelian category $\cC$ of commutative 
$k$-group schemes of finite type, and the associated 
pro-category $\Pro(\cC)$ of pro-algebraic groups. 
We denote by $\cL$ the full subcategory of $\cC$ 
with objects the affine (or equivalently, linear) algebraic 
groups. Then $\cL$ is a Serre subcategory of $\cC$, 
as follows from fpqc descent (see e.g. \cite[34.20.18]{SP}). 
Also, recall that the pro-category $\Pro(\cL)$ is equivalent 
to the category of commutative affine $k$-group schemes.

By the results of Subsection \ref{subsec:pro-artinian}, 
every object of $\Pro(\cC)$ has a largest affine quotient; 
this yields a right exact functor
\[ \pi_0^{\cC,\cL} : \Pro(\cC) \longrightarrow \Pro(\cL), \]
which commutes with filtered inverse limits and
extends the affinization functor $\cC \to \cL$
considered for example in \cite[III.3.8]{DG}. 
The results of Subsection \ref{subsec:homotopy} also
apply to this setting, in view of the following observation:

\begin{lemma}\label{lem:CL}
The pair $(\cC,\cL)$ satisfies the lifting property.
\end{lemma}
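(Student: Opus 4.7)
The plan is to construct, for any $X \in \cC$, a closed affine subgroup $L \subset X$ such that the quotient $X/L$ admits no nontrivial homomorphism to any affine $k$-group; granting this, the lifting property follows at once. Indeed, if $f \colon X \to Y$ is an epimorphism with $Y \in \cL$, then $L' := f(L) \subset Y$ is an affine subgroup, and the quotient $Y/L'$ is affine while receiving the induced surjection $X/L \twoheadrightarrow Y/L'$; by the defining property of $L$ this surjection must vanish, forcing $Y/L' = 0$ and hence $Y' := L \twoheadrightarrow Y$.

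To construct $L$, I first treat the case where $X$ is smooth and connected. Here Chevalley's structure theorem, valid over an arbitrary field $k$ in the commutative setting (the commutative version of Chevalley--Rosenlicht), produces a closed affine subgroup $L \subset X$ such that $X/L$ is an abelian variety $A$. Any morphism from an abelian variety to an affine group is trivial, since the coordinate ring of the affine group pulls back to $\cO(A) = k$; so this $L$ has the required property.

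The general case reduces to the smooth connected one by absorbing into $L$ the ``affine pieces'' of $X$ outside the smooth connected part, namely the component group $X/X^0$ (finite \'etale, hence affine) and any infinitesimal subgroup schemes of $X^0$ (finite, hence affine). These enlargements of the Chevalley subgroup of the smooth connected piece do not spoil the property that $X/L$ is (a quotient of) an abelian variety, so the argument carries through. The main obstacle is this reduction over an imperfect field $k$, where $X_{\mathrm{red}}$ need not be a closed subgroup scheme of $X$; this requires a careful treatment of the interplay between the infinitesimal and smooth parts, which can be organized by an induction on length in the artinian category $\cC$ or by appealing to a generalization of the Chevalley decomposition to non-smooth commutative group schemes.
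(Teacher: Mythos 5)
Your reduction of the lifting property to the existence, for each $X \in \cC$, of an affine subgroup scheme $L \subset X$ such that $X/L$ is an abelian variety is exactly the paper's argument: given an epimorphism $f : X \to Y$ with $Y \in \cL$, the quotient $Y/f(L) \cong X/(\Ker(f) + L)$ is simultaneously affine and a quotient of an abelian variety, hence trivial, so $L$ already surjects onto $Y$. This part of your proposal is correct and coincides with the proof in the text. The two arguments differ only in how the subgroup $L$ is obtained: the paper simply invokes a variant of Chevalley's theorem valid for \emph{arbitrary} commutative group schemes of finite type over an \emph{arbitrary} field, with $L$ allowed to be non-smooth and non-connected (\cite[Thm.~2.3]{Br17a}), whereas you attempt to derive this input from the classical smooth connected case.

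That derivation is where the gap lies, and you have mislocated the difficulty. The reductions you worry about are the easy part: in positive characteristic one may divide by a large Frobenius kernel to make $X$ smooth, and the component group is finite, hence affine, so neither the failure of $X_{\red}$ to be a subgroup nor the presence of infinitesimal or \'etale pieces is the real obstruction. The genuine problem sits inside the smooth connected case itself: over an imperfect field there is no ``commutative version of Chevalley--Rosenlicht'' with a \emph{smooth connected} affine part --- Totaro's pseudo-abelian varieties \cite{Totaro} are smooth connected commutative groups admitting no nontrivial smooth connected affine subgroup, yet they are not abelian varieties. The statement you need is true only because $L$ is permitted to acquire an infinitesimal part; it is proved by applying Chevalley over the perfect closure and descending the abelian-variety quotient along a finite purely inseparable extension, and this is precisely the ``generalization of the Chevalley decomposition to non-smooth commutative group schemes'' that you defer to in your final sentence. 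In other words, the generalization is not an optional route for handling non-smooth $X$; it is already indispensable for smooth connected $X$. With the reference \cite[Thm.~2.3]{Br17a} substituted for your sketch, your argument becomes the paper's.
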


\begin{proof}
Let $G \in \cC$. By a variant of Chevalley's structure 
theorem for algebraic groups (see \cite[Thm.~2.3]{Br17a})
that we will use repeatedly, there is an exact sequence
\begin{equation}\label{eqn:LGA} 
0 \longrightarrow L \longrightarrow G \longrightarrow A
\longrightarrow 0, 
\end{equation}
where $L$ is linear and $A$ is an abelian variety.
Let $f: G \to H$ be an epimorphism, where
$H$ is linear. Then $G' := G/(\Ker(f) + L)$
is linear (as a quotient of $G/\Ker(f) \cong H$)
and is an abelian variety (as a quotient of $G/L \cong A$).
Thus, $G' = 0$, i.e., $G = \Ker(f) + L$.
So the composition $L \to G \to H$ is an epimorphism.
\end{proof}

We now describe the quotient categories $\cC/\cL$
and $\Pro(\cC)/\Pro(\cL)$.
Consider the full subcategory $\cA$ of $\cC$ with
objects the abelian varieties; then $\cA$ is an additive
subcategory, but not a Serre subcategory. Denote by
$\ucA$ the corresponding isogeny category: the objects
of $\ucA$ are those of $\cA$, and the morphisms
are defined by
$\Hom_{\ucA}(G,H) := \Hom_{\cA}(G,H) \otimes_{\bZ} \bQ$.
Then $\ucA$ is a semi-simple artinian abelian category;
its simple objects are exactly the simple abelian varieties,
i.e., those having no non-trivial abelian subvariety.

\begin{lemma}\label{lem:isogeny}
With the above notation, the composite functor
$\cA \to \cC \to \cC/\cL$ induces equivalences
of categories 
\[ \ucA \stackrel{\cong}{\longrightarrow} \cC/\cL,
\quad 
\Pro(\ucA) \stackrel{\cong}{\longrightarrow} 
\Pro(\cC)/\Pro(\cL). \]
Moreover, $\Pro(\ucA)$ is semi-simple.
\end{lemma}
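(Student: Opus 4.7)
The plan has three stages, following the structure of the statement. First I establish the categorical equivalence $\ucA \stackrel{\cong}{\longrightarrow} \cC/\cL$; then I upgrade to the pro-categorical equivalence via \cite[Prop.~2.12]{Br17b} (already cited in Subsection \ref{subsec:pro-artinian}); finally I derive semi-simplicity from Poincar\'e's complete reducibility theorem.

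For the categorical equivalence, I first build a functor $F: \ucA \to \cC/\cL$. The composite $\cA \to \cC \to \cC/\cL$ sends any isogeny $f: A \to B$ to an isomorphism, since $\Ker(f)$ is finite, hence in $\cL$, and $\Coker(f) = 0$, so \cite[III.1.Lem.~2]{Gabriel} applies. The universal property of the isogeny category then yields the desired $\bQ$-linear $F$. Essential surjectivity of $F$ is immediate from the Chevalley sequence (\ref{eqn:LGA}) used in the proof of Lemma \ref{lem:CL}: every $G \in \cC$ becomes isomorphic in $\cC/\cL$ to its abelian quotient.

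Full faithfulness is the main step. Any morphism $A \to B$ in $\cC/\cL$ between abelian varieties is represented by a roof $A \stackrel{s}{\longleftarrow} G \stackrel{f}{\longrightarrow} B$ in $\cC$ with $\Ker(s), \Coker(s) \in \cL$. Applying Chevalley to $G$ gives $0 \to L_G \to G \to A_G \to 0$; since there are no nonzero homomorphisms from a linear group to an abelian variety, both $s$ and $f$ vanish on $L_G$ and hence factor through $A_G$. The induced morphism $\bar s : A_G \to A$ must then be an isogeny: an abelian variety admits no nontrivial linear subgroup or quotient, so $\Ker(\bar s) \in \cL$ forces $\Ker(\bar s)$ to be finite and $\Coker(\bar s) \in \cL$ forces it to vanish. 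Thus $\Hom_{\cC/\cL}(A,B)$ is computed as a colimit over isogenies $A' \to A$ of $\Hom_{\cA}(A',B)$, which is precisely $\Hom_{\ucA}(A,B) = \Hom_{\cA}(A,B) \otimes_{\bZ} \bQ$. The pro-category equivalence then follows by composing this with the already-cited $\Pro(\cC)/\Pro(\cL) \cong \Pro(\cC/\cL)$.

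For semi-simplicity, Poincar\'e's complete reducibility implies that $\ucA$ is a semi-simple artinian abelian category whose simple objects are the simple abelian varieties, with division-ring endomorphism algebras. In any such category $\cC'$, each object of $\Pro(\cC')$ decomposes as a product $\prod_\alpha S_\alpha^{I_\alpha}$ indexed by isomorphism classes of simples (for sets $I_\alpha$); morphisms and subobjects respect this decomposition, and each factor is a module over a division ring, so every short exact sequence splits. The main obstacle is the full-faithfulness step: the reduction of an arbitrary roof to one represented by an isogeny of abelian varieties relies crucially on Chevalley's structure theorem combined with the rigidity property that no nonzero morphism goes from a linear group to an abelian variety, in either direction.
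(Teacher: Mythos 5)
Your overall strategy (essential surjectivity via Chevalley, full faithfulness by computing $\Hom$ in the quotient, semi-simplicity via Poincar\'e reducibility plus a product decomposition in the pro-category) parallels the paper's, and the first and third stages are fine. But there is a genuine error in the key full-faithfulness step. You assert that ``there are no nonzero homomorphisms from a linear group to an abelian variety,'' and conclude that $s$ and $f$ vanish on $L_G$ and factor through $A_G$. This is false: the correct statement is only that the image of an affine group scheme in an abelian variety is \emph{finite} (affine and proper, hence finite), not zero. Finite group schemes are objects of $\cL$, and they map nontrivially to abelian varieties; e.g.\ if $t \in A(k)$ has order $n$, then $\bZ/n\bZ \to A$, $1 \mapsto t$, is nonzero. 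This is not a pathology you can ignore in the roof calculus: $G = A \times \bZ/n\bZ$ with $s(a,i) = a + it$ is a surjection with finite kernel, hence a legitimate denominator, and $s|_{L_G} \neq 0$. So your roofs do not, in general, factor through $A_G$, and the reduction to isogenies $A' \to A$ as you state it breaks down. (The vanishing you invoke does hold for \emph{smooth connected} linear groups, but $L_G$ in the Chevalley sequence for an arbitrary $G \in \cC$ need not be smooth or connected, and you make no reduction to that case.)

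The gap is repairable: since $s(L_G)$ and $f(L_G)$ are finite, hence in $\cL$, one can pass to $A/s(L_G)$ and $B/f(L_G)$, which are isomorphic to $A$ and $B$ in $\cC/\cL$, and the induced maps from $A_G$ then give the desired reduction to $(\text{isogeny})^{-1}\circ(\text{homomorphism})$. The paper sidesteps the issue entirely by using Gabriel's formula $\Hom_{\cC/\cL}(G,H) = \lim_{\to} \Hom_{\cC}(G',H/H')$ directly: when $G$ is an abelian variety the only subobject $G' \subset G$ with $G/G'$ linear is $G$ itself (a quotient of an abelian variety that is affine is trivial), and the linear subobjects $H'$ of the abelian variety $H$ are exactly the finite ones, contained in some $H[n]$; this gives $\Hom_{\cC/\cL}(G,H) = \lim_{\to}\Hom_{\cC}(G,H/H[n]) \cong \Hom_{\cC}(G,H)\otimes_{\bZ}\bQ$ with no factorization claim needed. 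For semi-simplicity the paper also argues slightly differently, checking projectivity of every object of $\Pro(\ucA)$ via the criterion of \cite[V.2.3.5]{DG} and the existence of sections of epimorphisms in $\ucA$, which avoids having to justify that subobjects and morphisms in $\Pro(\ucA)$ respect your product decomposition.
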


\begin{proof}
Denote by $F : \cA \to \cC/\cL$ the composite functor.
Then $F$ is essentially surjective by Chevalley's 
theorem again. Also, recall from \cite[III.1]{Gabriel} that  
\[ \Hom_{\cC/\cL}(G,H) = \lim_{\to} \Hom_{\cC}(G',H/H') \]
for all $G,H \in \cC$, where $G'$ (resp.~$H'$) runs over the 
subgroup schemes of $G$ such that $G/G'$ is linear
(resp.~the linear subgroup schemes of $H$). 
When $G$ and $H$ are abelian varieties, we must
have $G' = G$; moreover, $H'$ is finite, or equivalently,
contained in the $n$-torsion subgroup scheme $H[n]$ 
for some $n \geq 1$. As a consequence,
\[ \Hom_{\cC/\cL}(G,H) = \lim_{\to} \Hom_{\cC}(G,H/H[n]), \]
where the direct limit is over the positive integers ordered
by divisibility. This yields a natural isomorphism
\[ \Hom_{\cC/\cL}(G,H) \stackrel{\cong}{\longrightarrow}
\Hom_{\cC}(G,H) \otimes_{\bZ} \bQ \]
(see e.g. \cite[Prop.~3.6]{Br17a} for details), and hence 
the first equivalence of categories, $\ucA \cong \cC/\cL$. 
Since $\Pro(\cC/\cL) \cong \Pro(\cC)/\Pro(\cL)$, 
it follows that $\Pro(\ucA) \cong \Pro(\cC)/\Pro(\cL)$.

To show that $\Pro(\ucA)$ is semi-simple, it suffices
to check that every object is projective. In view of
\cite[V.2.3.5]{DG}, it suffices in turn to check that
for any $G \in \Pro(\ucA)$ and any epimorphism
$f: G_1 \to G_2$ in $\ucA$, the induced map
$\Hom_{\ucA}(G,G_1) \to \Hom_{\ucA}(G,G_2)$
is surjective. But this follows from the existence 
of a section of $f$.
\end{proof}

Before stating our next result, we introduce 
some notation. We denote by
\[ Q = Q^{\cC,\cL} : 
\Pro(\cC) \longrightarrow \Pro(\cC)/\Pro(\cL) \]
the quotient functor, and by
\[ C = C^{\cC,\cL} :
\Pro(\cC)/\Pro(\cL) \longrightarrow \Pro(\cC) \]
the associated cosection functor. For any abelian
variety $A$, we set 
\[ P(A) := CQ(A). \]

\begin{proposition}\label{prop:affine}

\begin{enumerate}

\item[{\rm (i)}] The functor $C$ is exact.

\item[{\rm (ii)}] The projective objects of $\Pro(\cC)$ 
are exactly the products of those of  $\Pro(\cL)$ with
the $P(A)$, where $A$ is an abelian variety.
Moreover, $P(A)$ is a projective cover of $A$ 
in $\Pro(\cC)$, and is uniquely divisible.

\end{enumerate}

\end{proposition}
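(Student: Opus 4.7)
The plan is to derive both parts from the general framework of Section \ref{sec:homotopy} together with the semi-simplicity of $\Pro(\ucA) \simeq \Pro(\cC)/\Pro(\cL)$ established in Lemma \ref{lem:isogeny}.

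For (i), since $\Pro(\ucA)$ is semi-simple, every short exact sequence in $\Pro(\cC)/\Pro(\cL)$ splits; as $C$ is a left adjoint, it preserves finite biproducts, hence carries any such sequence to a (split) short exact sequence in $\Pro(\cC)$. Equivalently, one could appeal to Lemma \ref{lem:vanishing}, which identifies exactness of $C$ with left exactness of $\pi_1^{\cC,\cL}$.

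For (ii), I first check that every abelian variety $A$ lies in ${^{\perp}\Pro(\cL)}$: a morphism $f : A \to L$ to a linear algebraic group has image both proper (as a quotient of $A$) and affine (as a closed subscheme of $L$), hence finite; since $A$ is smooth and connected, $\Ker(f)$ has dimension $\dim A$ and thus equals $A$. By Lemma \ref{lem:perp}(i), $A \in {^{\perp}\Pro(\cL)}$. Now $Q(A)$ is projective in $\Pro(\ucA)$ by semi-simplicity, and the left adjoint $C$ to the exact functor $Q$ sends projectives to projectives; hence $P(A) = CQ(A)$ is projective in $\Pro(\cC)$. By Lemma \ref{lem:isos}, the adjunction map $P(A) \to A^{\cL} = A$ is an essential epimorphism, so $P(A)$ is a projective cover of $A$. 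Unique divisibility follows from Lemma \ref{lem:tors} applied with $X = A$ and $\cB = \cL$: $A$ is divisible, and each $A[n]$ is a finite (hence linear) group scheme.

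For the classification, I apply Lemma \ref{lem:proj}: any projective $P \in \Pro(\cC)$ decomposes as $P^{\cL} \oplus \pi_0(P)$, where $\pi_0(P) \in \Pro(\cL)$ is projective and $P^{\cL} \cong \tP$. By Corollary \ref{cor:indproj}, $\tP$ is a product of indecomposable projectives of the form $\tX$ with $X \in {^{\perp}\Pro(\cL)}$ and $Q(X)$ indecomposable projective in $\Pro(\ucA)$. Since $\Pro(\ucA)$ is semi-simple, its indecomposable objects are the simple ones, corresponding under $\ucA \simeq \cC/\cL$ to simple abelian varieties; thus each factor has the form $P(B)$. The main obstacle is part (i), but semi-simplicity of the quotient category reduces it to preservation of split sequences by the left adjoint $C$; the rest is a formal consequence of the lemmas of Section \ref{sec:homotopy}.
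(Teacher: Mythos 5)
Your proposal is correct and follows essentially the same route as the paper: exactness of $C$ from semi-simplicity of $\Pro(\cC)/\Pro(\cL)$ together with $C$ preserving (bi)products, the classification of projectives from the general structure results of Section \ref{sec:homotopy} combined with semi-simplicity of the quotient, the projective-cover claim from $A \in {^{\perp}\Pro(\cL)}$ and the essentiality of the adjunction map (Lemma \ref{lem:isos}), and unique divisibility from Lemma \ref{lem:tors}. The only cosmetic difference is that you reach the decomposition via Lemma \ref{lem:proj} and Corollary \ref{cor:indproj} (passing through indecomposables), where the paper invokes the dual of Gabriel's III.3.Cor.~2 directly; these are interchangeable.
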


\begin{proof}
(i) Recall that $C$ commutes with inverse limits,
and hence with products. Since the category 
$\Pro(\cC)/\Pro(\cL)$ is semi-simple (Lemma
\ref{lem:isogeny}), this yields the assertion.

(ii) By Lemma \ref{lem:CL} and \cite[Lem.~2.14]{Br17b}, 
every projective object of $\Pro(\cL)$ is projective
in $\Pro(\cC)$. In view of the dual statement of 
\cite[III.3.Cor. 2]{Gabriel}, it follows that the projective
objects of $\Pro(\cC)$ are exactly the products of
those of $\Pro(\cL)$ with the images under $C$ 
of projective objects of $\Pro(\cC)/\Pro(\cL)$. 
Using again the fact that this quotient category
is semi-simple, this yields the first assertion.

Let $A$ be an abelian variety. Since every 
affine quotient of $A$ is trivial, the adjunction map 
$\rho : P(A) \to A$ is an epimorphism. Also, $\rho$
is essential by Lemma \ref{lem:isos}; thus, $P(A)$
is a projective cover of $A$ in $\Pro(\cC)$.
The unique divisibility assertion follows from Lemma
\ref{lem:tors}, since $A$ is divisible and its $n$-torsion
subgroup schemes are finite for all $n \geq 1$.
\end{proof}

\subsection{The profinite fundamental group}
\label{subsec:profinite}

We now consider the Serre subcategory $\cF$ of $\cL$
with objects the finite group schemes.
As in the introduction, we denote by 
\[ \varpi_i := \pi_i^{\cC,\cF} : \Pro(\cC) \longrightarrow \Pro(\cF) \] 
the profinite homotopy functors. For any $G \in \Pro(\cC)$,
the exact sequence (\ref{eqn:long}) may be rewritten as
\[ 0 \longrightarrow \varpi_1(G) \longrightarrow \tG
\longrightarrow G \longrightarrow \varpi_0(G)
\longrightarrow 0, \]
where $\tG$ denotes the profinite universal cover of
$G^{\cF} := \Ker(G \to \varpi_0(G))$.

The pair $(\cC,\cF)$ satisfies the lifting property in view of
\cite[Thm.~1.1]{Br15}; thus, we may again use 
the constructions and results of Section \ref{sec:homotopy}.

\begin{lemma}\label{lem:divisible}
Let $G \in \Pro(\cC)$ be divisible. 

\begin{enumerate}

\item[{\rm (i)}] $G[n]$ is profinite for any $n \geq 1$.

\item[{\rm (ii)}] $\varpi_0(G) = 0$.

\item[{\rm (iii)}] $\tG$ is the limit of the filtered inverse 
system $(G,n_G)_{n \geq 1}$, 
where the positive integers are ordered by divisibility.
Also, $\tG$ is uniquely divisible.

\item[{\rm (iv)}] $\varpi_1(G) = \lim_{\leftarrow} G[n]$ 
(limit over the above system). Moreover, we have
$\varpi_1(G)/n \varpi_1(G) \cong G[n]$ for any $n \geq 1$.

\item[{\rm (v)}] $\varpi_i(G) = 0$ for any $i \geq 2$.

\end{enumerate}

\end{lemma}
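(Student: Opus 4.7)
The strategy is to prove (i) and (ii) from first principles and then extract (iii)--(v) by applying Lemma \ref{lem:tors} to $G$ and, in a bootstrap, to $L := \lim_n(G,n_G)$.

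For (i), write $G = \lim_i G_i$ as the cofiltered inverse limit of its algebraic quotients in $\cC$; each $G_i$ is divisible as a quotient of the divisible $G$. Since kernels commute with cofiltered inverse limits in $\Pro(\cC)$, we have $G[n] = \lim_i G_i[n]$, so it suffices to show each $G_i[n]$ is finite. Chevalley's theorem, in the form of \cite[Thm.~2.3]{Br17a}, supplies an exact sequence $0 \to L \to G_i \to A \to 0$ in $\cC$ with $L$ linear and $A$ an abelian variety; the snake lemma applied to multiplication by $n$, combined with $A/nA = G_i/nG_i = 0$, forces $L/nL = 0$, so $L$ is divisible as well. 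Classification of commutative linear algebraic groups (first over $\bar k$, then descended to $k$) then forces $L$ to be a torus in positive characteristic, or a product of a torus and a vector group in characteristic zero; in either case $L[n]$ is finite, as is $A[n]$, giving $G_i[n] \in \cF$ and hence $G[n] \in \Pro(\cF)$.

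For (ii), $\varpi_0(G)$ is divisible as a quotient of $G$; but any divisible object of $\Pro(\cF)$ vanishes, since expressing it as a cofiltered limit of its finite quotients reduces to the observation that a finite commutative group scheme $H$ with $|H|_H$ simultaneously zero and an epimorphism must be zero. With (i) and (ii) in hand, Lemma \ref{lem:tors} applies directly to $G$ with $\cB = \cF$, giving the unique divisibility of $\tG$ and of $\varpi_i(G)$ for $i \geq 2$, together with the five-term exact sequence which, upon substituting $\varpi_0(G) = 0$, yields $\varpi_1(G)/n\varpi_1(G) \cong G[n]$ as required for (iv). Moreover, the $\varpi_i(G)$ for $i \geq 2$ are simultaneously profinite and uniquely divisible, hence zero by the argument of (ii); this is (v).

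It remains to identify $\tG$ with $L := \lim_n (G, n_G)$ indexed by $\bN$ under divisibility, and $\varpi_1(G)$ with $T := \lim_n G[n]$. Constructing $L$ directly, the shift-by-$m$ map on indices inverts multiplication by $m$ on $L$, so $L$ is uniquely divisible; its projection to the $n=1$ component is an epimorphism onto $G$ with kernel $T \in \Pro(\cF)$ by (i). To identify $0 \to T \to L \to G \to 0$ with the universal sequence via Lemma \ref{lem:unique}, one must show that $L$ lies in the essential image of $C^{\cC,\cF}$. The cleanest route --- and the step I expect to be the main technical obstacle, since the direct vanishing of $\Ext^1_{\Pro(\cC)}(L,Y)$ for $Y \in \Pro(\cF)$ is not immediate --- is a second application of Lemma \ref{lem:tors}, now to $L$: unique divisibility gives $L[n] = 0 \in \Pro(\cF)$, so the resulting five-term sequence forces $\varpi_0(L)$ and $\varpi_1(L)$ to be uniquely divisible profinite objects, hence zero, whence $L$ coincides with its own profinite universal cover. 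Lemma \ref{lem:unique} then matches $0 \to T \to L \to G \to 0$ with $0 \to \varpi_1(G) \to \tG \to G \to 0$, completing (iii) and (iv).
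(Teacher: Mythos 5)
Your overall architecture is sound, and your arguments for (ii), (iv) and (v) match the paper's, but part (i) contains a false step. Applying the snake lemma to multiplication by $n$ on $0 \to L \to G_i \to A \to 0$, the vanishing of $G_i/nG_i$ and $A/nA$ yields only a surjection $A[n] \to L/nL$; that is, $L/nL$ is a \emph{finite} quotient of $A[n]$, not zero. It genuinely need not be zero: for a non-minimal instance of the Chevalley sequence such as $0 \to C \to E \to E/C \to 0$, with $E$ an elliptic curve and $C \cong \bZ/p\bZ$ a finite subgroup, $G_i = E$ is divisible while $C/pC = C \neq 0$. So the classification of $L$ as a torus (resp.\ a torus times a vector group), which requires $L$ divisible, is unjustified. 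The repair is easy: either observe that $L/nL$ finite already gives $\dim(nL) = \dim L$, hence $\dim L[n] = 0$; or, better, drop Chevalley entirely as the paper does --- since $G_i$ is divisible, $n_{G_i}$ is an epimorphism of algebraic groups, so its kernel $G_i[n]$ has dimension $0$ and is therefore finite.

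For (iii)--(iv) your route is genuinely different from the paper's and appears correct. The paper shows that $L := \lim_{\leftarrow}(G,n_G)$ lies in the essential image of $C$ by computing $\Ext^i_{\Pro(\cC)}(L,H) \cong \lim_{\to} \Ext^i_{\Pro(\cC)}(G,H) = 0$ for $H \in \cF$, using the continuity of $\Ext$ along filtered inverse limits \cite[V.2.3.9]{DG} together with the fact that the transition maps act on $\Ext^i(G,H)$ by integers killing $H$; it then concludes via Lemmas \ref{lem:sc} and \ref{lem:unique}. Your bootstrap --- $L$ uniquely divisible, so $L[n] = 0$, so Lemma \ref{lem:tors} forces $\varpi_0(L)$ and $\varpi_1(L)$ to be uniquely divisible profinite objects, hence zero by the argument of (ii), whence $Y_0 = Y_1 = 0$ for $L$ by Lemmas \ref{lem:isos} and \ref{lem:fund} and $L$ is in the essential image of $C$ by Lemma \ref{lem:sc} --- reaches the same conclusion without invoking \cite[V.2.3.9]{DG}, staying entirely within the formalism of Section \ref{sec:homotopy}. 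Both are valid; the paper's computation is shorter, yours is more self-contained.
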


\begin{proof}
(i) Let $G = \lim_{\leftarrow} G_i$, where the $G_i$
are algebraic groups and the projections $G \to G_i$
are epimorphisms. Then the induced map 
$G[n] \to \lim_{\leftarrow} G_i[n]$ is a monomorphism.
Moreover, each $G_i$ is divisible (as a quotient of $G$); 
thus, $G_i[n]$ is finite for dimension reasons. So 
$\lim_{\leftarrow} G_i[n]$ is profinite.

(ii) Consider an epimorphism $G \to H$, where $H \in \cF$.
Then $H$ is divisible (as a quotient of $G$) and torsion
(as a finite group scheme), hence zero. This yields the assertion.

(iii) Let $G' := \lim_{\leftarrow} G$ (limit over the above
system). For any $H \in \cC$ and $i \geq 0$, we have
\[ \Ext^i_{\Pro(\cC)}(G',H) \cong 
\lim_{\to} \Ext^i_{\Pro(\cC)}(G,H) \]
in view of \cite[V.2.3.9]{DG}. Assume that $H \in \cF$;
then we may choose an integer $n \geq 1$ such that 
$n_H = 0$. Thus, $\Ext^i_{\Pro(\cC)}(G,H)$ is killed 
by $n$, and hence $\Ext^i_{\Pro(\cC)}(G',H) = 0$. 
Using Lemma \ref{lem:sc}, it follows that the adjunction 
map $CQ(G') \to G'$ is an isomorphism. 

The projection $\pi : G' \to G$ associated with $n = 1$, 
lies in an exact sequence
\begin{equation}\label{eqn:ext} 
0 \longrightarrow \lim_{\leftarrow} G[n] \longrightarrow
G' \stackrel{\pi}{\longrightarrow} G \longrightarrow 0, 
\end{equation}
where $\lim_{\leftarrow} G[n]$ is profinite. Thus,
$\pi$ induces an isomorphism $CQ(G') \to CQ(G) = \tG$.
So we may identify $G'$ with $\tG$. Then (\ref{eqn:ext})
is identified with the universal profinite extension of $G$,
in view of Lemma \ref{lem:unique}.

(iv) The first assertion has just been proved; the second
one follows from Lemma \ref{lem:tors} in view of the
vanishing of $\varpi_0(G)$.

(v) By Lemma \ref{lem:tors} again, the profinite group scheme
$\varpi_i(G)$ is uniquely divisible for any $i \geq 2$.
As a consequence, every finite quotient of $\varpi_i(G)$
is divisible, hence zero. This yields the assertion.
\end{proof}

We may now prove a large part of our main result:

\begin{theorem}\label{thm:perfect}
Assume that $k$ is perfect.

\begin{enumerate}

\item[{\rm (i)}] We have $\varpi_i = 0$ for all $i \geq 2$;
equivalently, $\varpi_1$ is left exact.

\item[{\rm (ii)}] The cosection functor 
$C : \Pro(\cC)/\Pro(\cF) \to \Pro(\cC)$
is exact.

\item[{\rm (iii)}] The profinite universal cover 
$\tG$ has projective dimension at most $1$, 
for any $G \in \Pro(\cC)$.

\end{enumerate}

\end{theorem}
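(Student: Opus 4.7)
The equivalence (i) $\Leftrightarrow$ (ii) is immediate from Lemma \ref{lem:vanishing}, so the plan is to prove (i) and (iii) directly; (ii) then follows.

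\textbf{Plan for (i).} Since the profinite homotopy functors $\varpi_i$ commute with filtered inverse limits (via \cite[V.2.3.8]{DG} applied to the left derived functors of $\pi_0^{\cC,\cF}$, building on Lemma \ref{lem:com}), it suffices to prove $\varpi_i(G) = 0$ for $i \geq 2$ in the case $G \in \cC$. For such $G$, Chevalley's theorem (as already used in Lemma \ref{lem:CL}) provides an exact sequence
\[ 0 \longrightarrow L \longrightarrow G \longrightarrow A \longrightarrow 0 \]
with $L$ linear and $A$ an abelian variety. Running the homotopy long exact sequence (\ref{eqn:homotopy}) associated to this short exact sequence reduces the desired vanishing to the cases $G = L$ and $G = A$. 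For $A$, Lemma \ref{lem:divisible}(v) applies because $A$ is divisible and each torsion subgroup $A[n]$ is finite. For $L$, Remark \ref{rem:comp} yields $\varpi_i(L) = \pi_i^{\cL,\cF}(L)$, and over a perfect field $k$ this vanishes for $i \geq 2$ by the theorem of Demazure-Gabriel \cite[V.3.3]{DG} cited in the introduction.

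\textbf{Plan for (iii).} I apply the exact sequence (\ref{eqn:long}) for the pair $(\cC,\cL)$ to the object $\tG \in \Pro(\cC)$, obtaining
\[ 0 \longrightarrow \pi_1^{\cC,\cL}(\tG) \longrightarrow \widetilde{\tG}^{\cC,\cL} \longrightarrow \tG \longrightarrow \pi_0^{\cC,\cL}(\tG) \longrightarrow 0, \]
and then combine this with information about its terms. The middle object $\widetilde{\tG}^{\cC,\cL}$ is projective in $\Pro(\cC)$: indeed, $Q^{\cC,\cL}(\tG)$ lies in $\Pro(\cC)/\Pro(\cL) \cong \Pro(\ucA)$, which is semi-simple by Lemma \ref{lem:isogeny}, and the cosection $C^{\cC,\cL}$ preserves projectives. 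The outer terms $\pi_0^{\cC,\cL}(\tG)$ and $\pi_1^{\cC,\cL}(\tG)$ lie in $\Pro(\cL)$; splitting the $4$-term sequence into two short exact sequences and running the $\Ext$ long exact sequences reduces the bound $\mathrm{pd}(\tG) \leq 1$ to a projective-dimension bound on these two affine homotopy objects. Over a perfect field, this is supplied by the structure theory of $\Pro(\cL)$ from \cite[V.3.7]{DG}, via the canonical multiplicative-unipotent decomposition $\Pro(\cL) \cong \Pro(\cL_m) \times \Pro(\cL_u)$: each factor is analysed via its character-group description on the multiplicative side and its Witt-vector structure on the unipotent side, and the uniquely-divisible objects arising as affine homotopy functors of $\tG$ are shown to be projective.

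\textbf{Main obstacle.} Step (i) is relatively clean given the machinery already established (Lemma \ref{lem:divisible}, Remark \ref{rem:comp}, and Chevalley's theorem). The technical core is in (iii): controlling the projective dimensions of $\pi_0^{\cC,\cL}(\tG)$ and $\pi_1^{\cC,\cL}(\tG)$ in $\Pro(\cL)$ over a perfect field requires the full strength of the Demazure-Gabriel analysis of projectives in the pro-category of affine commutative group schemes, with care needed for the unipotent part in positive characteristic, where isogenies between Witt-vector groups must be handled correctly.
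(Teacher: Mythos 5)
Your treatment of (i) and (ii) matches the paper's proof essentially verbatim: reduce to $G \in \cC$ via commutation with filtered inverse limits, use Chevalley's theorem and the homotopy exact sequence to split into the abelian-variety case (Lemma \ref{lem:divisible}) and the linear case (Remark \ref{rem:comp} plus Demazure--Gabriel), and get (ii) from Lemma \ref{lem:vanishing}. That part is fine.

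Part (iii) is where there is a genuine gap, and your plan misses the one input that makes the paper's proof work. The paper's argument is short: by the main result of \cite{Br17a} the isogeny category $\cC/\cF$ has homological dimension $1$, hence so does $\Pro(\cC)/\Pro(\cF) \cong \Pro(\cC/\cF)$; since $\tG = CQ(G)$ and $C$ is exact (by (ii)) and preserves projectives, applying $C$ to a length-one projective resolution of $Q(G)$ gives one of $\tG$. Your route via the pair $(\cC,\cL)$ does not substitute for this. First, when $\tG$ is itself affine (e.g.\ $G = \bG_a$ in characteristic $p$, where $\tG = \widetilde{\bG_a}$ is an extension of $\bG_a$ by the profinite group $\varpi_1(\bG_a)$ and hence lies in $\Pro(\cL)$), your four-term sequence degenerates to $\pi_0^{\cC,\cL}(\tG) = \tG$ and $\pi_1^{\cC,\cL}(\tG)=0$, so the reduction produces exactly the statement you started with: you must still prove $\mathrm{pd}(\tG) \leq 1$ inside $\Pro(\cL)$, and the Demazure--Gabriel structure theory alone does not give this (the homological dimension of $\Pro(\cL)$ over a perfect field of characteristic $p$ is $2$, so some genuine property of $\tG$ --- namely that it lies in the image of $C^{\cC,\cF}$ --- must be used). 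Second, your closing claim that the affine homotopy objects of $\tG$ are uniquely divisible is false in these cases ($\widetilde{\bG_a}$ is killed by $p$ modulo its profinite part), and $\widetilde{\bG_a}$ is not projective (its projective cover is $\tW$, whose kernel over $\bG_a$ is not profinite). Even in the abelian-variety case, where your scheme does apply, the required projectivity of $\pi_1^{\cC,\cL}(\tA) = L(A)^{\cF}$ is precisely the content of Theorem \ref{thm:P(A)}, which the paper proves \emph{after} and independently of Theorem \ref{thm:perfect} via the explicit analysis of $M(A)$ and $U(A)$; so your plan would force you to front-load all of Subsection \ref{subsec:P(A)} and would still not cover the affine case. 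The fix is to replace your (iii) entirely by the homological-dimension-one property of $\cC/\cF$ transported through the exact, projective-preserving cosection $C$.
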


\begin{proof}
(i) In view of the homotopy exact sequence and the fact
that $\varpi_i$ commutes with filtered inverse limits, 
it suffices to show that $\varpi_i(G) = 0$ for any 
$G \in \cC$ and any $i \geq 2$. This follows from Lemma
\ref{lem:divisible} when $G$ is an abelian variety.
On the other hand, when $G \in \cL$, we have
$\varpi_i(G) =  \pi_i^{\cL,\cF}(G)$
in view of Remark \ref{rem:comp} and Lemma \ref{lem:CL}.
So the assertion follows from \cite[V.3.6.8]{DG} in that 
case. In the general case, just recall that every $G \in \cC$ 
is an extension of an abelian variety by a linear algebraic 
group.

(ii) This is just a reformulation of (i) (see Lemma 
\ref{lem:vanishing}).

(iii) By the main result of \cite{Br17a}, the category 
$\cC/\cF$ has homological dimension $1$; hence the same
holds for the category 
$\Pro(\cC)/\Pro(\cF) \cong \Pro(\cC/\cF)$
(see e.g. \cite[Prop.~2.12, Lem.~2.15]{Br17b}). As $C$ sends 
projectives to projectives, this yields the assertion.
\end{proof}

\begin{remark}\label{rem:profinite}
Returning to an arbitrary ground field $k$, consider the
full subcategory $\cE$ of $\cC$ with objects the finite
\'etale group schemes. Then $\cE$ is a Serre subcategory
of $\cF$; moreover, the pair $(\cC,\cE)$ satisfies the lifting
property if and only if $k$ is perfect (see e.g.
\cite[Thm.~1.1, Rem.~3.3]{Br15}). The functors
\[ \pi_i := \pi_i^{\cC,\cE} : 
\Pro(\cC) \longrightarrow  \Pro(\cE) \]
are the ``pro-\'etale homotopy functors'', considered in
\cite[V.3.4.1]{DG} for affine group schemes over
perfect fields; note that $\pi_0(G) = G/G^0$
for any $G \in \cC$, where $G^0$ denotes the 
neutral component (see e.g. \cite[II.5.1]{DG}).
The functor $\pi_0^{\cF,\cE} : \Pro(\cF) \to \Pro(\cE)$ 
is exact in view of \cite[V.3.1.5]{DG}; using Lemma 
\ref{lem:comp}, this yields natural isomorphisms
\[ \pi_i(G) \stackrel{\cong}{\longrightarrow}
\pi_0^{\cF,\cE}(\pi_i^{\cC,\cF}(G)) \]
for all $G \in \Pro(\cC)$ and all $i \geq 0$. 
As a consequence, the pro-\'etale fundamental group
$\pi_1$ is left exact when $k$ is perfect. 
\end{remark}

\subsection{Projective covers of abelian varieties}
\label{subsec:P(A)}

Consider an abelian variety $A$, and its projective cover 
$P(A)$ in $\Pro(\cC)$. By Proposition \ref{prop:affine}, 
we have an exact sequence in $\Pro(\cC)$
\begin{equation}\label{eqn:LP}
0 \longrightarrow L(A) \longrightarrow P(A)
\stackrel{\rho}{\longrightarrow} A \longrightarrow 0, 
\end{equation}
where $L(A)$ is affine. Also, recall that (\ref{eqn:LP}) 
is the universal affine extension of $A$, that is, 
the pushout by this extension yields an isomorphism
\begin{equation}\label{eqn:univCL} 
\Hom_{\Pro(\cL)}(L(A),G) 
\stackrel{\cong}{\longrightarrow}
\Ext^1_{\Pro(\cC)}(A,G) 
\end{equation}
for any $G \in \Pro(\cL)$. 

Next, note that an algebraic group $G$ is an object of 
${^{\perp} \Pro(\cL)}$ if and only if $G$ is anti-affine,
i.e., $\cO(G) = k$ (as follows from the affinization theorem,
see \cite[III.3.8.2]{DG}). In view of Lemma \ref{lem:ess}, 
it follows that $P(A)$ is the inverse limit of all anti-affine 
extensions of $A$. Using the affinization theorem again, 
one can deduce that the exact sequence
(\ref{eqn:LP}) is the universal affine extension of $A$
by a (not necessarily commutative) affine $k$-group 
scheme. One can also obtain a structure result for $P(A)$
by using the classification of anti-affine groups
(see \cite[Thm.~2.7]{Br09}). We will rather obtain such 
a result (Theorem \ref{thm:P(A)}) via an alternative
approach, which relates $P(A)$ to the
universal profinite cover of $A$.

Consider the exact sequence as in (\ref{eqn:exact}),
\[ 0 \longrightarrow L(A)^{\cF} \longrightarrow
L(A) \longrightarrow \varpi_0(L(A)) \longrightarrow 0. \]
Then the induced exact sequence
\[ 0 \longrightarrow \varpi_0(L(A)) \longrightarrow
P(A)/L(A)^{\cF} \longrightarrow A \longrightarrow 0 \]
is the universal profinite extension of $A$, as observed in 
Remark \ref{rem:comp}. We thus identify $\varpi_0(L(A))$
with $\varpi_1(A)$, and $P(A)/L(A)^{\cF}$ with the
profinite universal cover $\tA$. This yields an exact
sequence
\begin{equation}\label{eqn:LFP}
0 \longrightarrow L(A)^{\cF} \longrightarrow
P(A) \longrightarrow \tA \longrightarrow 0.
\end{equation}

\begin{lemma}\label{lem:acyclic}
With the above notation, 
$\varpi_i(\tA) = 0 = \varpi_i(L(A)^{\cF})$ for any 
$i \geq 0$.
\end{lemma}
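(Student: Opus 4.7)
My plan is to handle $\tA$ first by appealing directly to Lemma \ref{lem:divisible}, and then to deduce the vanishing for $L(A)^{\cF}$ from the long exact homotopy sequence attached to (\ref{eqn:LFP}).

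For $\tA$, I first note that Lemma \ref{lem:divisible} applies to $G = A$, since the abelian variety $A$ is divisible and $A[n]$ is a finite group scheme (hence in $\cF \subset \Pro(\cF)$) for every $n \geq 1$. Part (iii) of that lemma then says that $\tA$ is uniquely divisible, so $\tA[n] = 0$ for every $n \geq 1$. Consequently, Lemma \ref{lem:divisible} applies equally well to $G = \tA$ (its torsion being trivially profinite), and yields $\varpi_0(\tA) = 0$ from part (ii), $\varpi_1(\tA) = \lim_{\leftarrow} \tA[n] = 0$ from part (iv), and $\varpi_i(\tA) = 0$ for $i \geq 2$ from part (v).

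For $L(A)^{\cF}$, I would invoke the homotopy long exact sequence (\ref{eqn:homotopy}) attached to (\ref{eqn:LFP}). The key point is that $\varpi_i(P(A)) = 0$ for every $i \geq 0$: for $i \geq 1$ this holds because $P(A)$ is projective in $\Pro(\cC)$, while for $i = 0$ it follows from $P(A) \in {^{\perp}\Pro(\cL)} \subset {^{\perp}\Pro(\cF)}$. The first inclusion is Lemma \ref{lem:perp}(iii) applied to the essential epimorphism $\rho : P(A) \to A$, using that the abelian variety $A$ admits no nonzero morphism to an affine group; the second is immediate from $\cF \subset \cL$. Combined with the vanishing just established for $\tA$, the long exact sequence collapses to isomorphisms $\varpi_i(L(A)^{\cF}) \cong \varpi_{i+1}(\tA) = 0$ for all $i \geq 0$.

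There is no serious obstacle here: once the unique divisibility of $\tA$ is recognized, the entire statement reduces to routine bookkeeping in the homotopy long exact sequence, guided by the projectivity and anti-affinity of $P(A)$.
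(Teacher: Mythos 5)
Your proof is correct and follows essentially the same route as the paper: both arguments rest on Lemma \ref{lem:divisible} for the vanishing of the $\varpi_i(\tA)$, and on the homotopy exact sequence of (\ref{eqn:LFP}) together with $\varpi_i(P(A)) = 0$ for the vanishing of the $\varpi_i(L(A)^{\cF})$. The only (harmless) variation is in the first half, where the paper gets $\varpi_0(\tA) = \varpi_1(\tA) = 0$ directly from the construction of $\tA$ as an object in the essential image of $C$ and the higher vanishing from the homotopy sequence of the universal profinite extension of $A$, whereas you re-apply Lemma \ref{lem:divisible} to the uniquely divisible object $\tA$ itself.
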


\begin{proof}
Since $A$ is divisible, we have $\varpi_i(A) = 0$
for $i \geq 2$ in view of Lemma \ref{lem:divisible}.
Using the homotopy exact sequence associated
with the universal profinite extension
\[ 0 \longrightarrow \varpi_1(A) \longrightarrow
\tA \longrightarrow A \longrightarrow 0 \]
together with Lemma \ref{lem:pro},
it follows that $\varpi_i(\tA) = 0$ for $i \geq 2$
as well. Also, $\varpi_0(\tA) = 0 = \varpi_1(\tA)$
by construction.

The assertion on the $\varpi_i(L(A)^{\cF})$
follows by using the exact sequence 
(\ref{eqn:LFP}).
\end{proof}

By \cite[IV.3.1.1]{DG}, there is a unique exact
sequence in $\Pro(\cL)$
\begin{equation}\label{eqn:MLU}
 0 \longrightarrow M(A) \longrightarrow
L(A) \longrightarrow U(A) \longrightarrow 0,
\end{equation}
where $M(A)$ is of multiplicative type and $U(A)$
is unipotent; if $k$ is perfect, then (\ref{eqn:MLU})
has a unique splitting. We now investigate the 
unipotent part $U(A)$:

\begin{lemma}\label{lem:U(A)}

\begin{enumerate}

\item[{\rm (i)}] There is an isomorphism
\[ \Hom_{\Pro(\cL)}(U(A),\bG_a) \cong
H^1(A,\cO_A) \]
which is compatible with the action of
$\End_{\cC}(\bG_a)$. 

\item[{\rm (ii)}] If $\charac(k) = 0$, then $U(A)$ 
is the unipotent group with Lie algebra dual of 
$H^1(A,\cO_A)$. In particular, $\dim(U(A)) = \dim(A)$.

\item[{\rm (iii)}]  If $\charac(k) > 0$, then $U(A)$ is profinite.

\end{enumerate}

\end{lemma}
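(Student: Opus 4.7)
My plan is to prove the three parts in order, each with a rather different technique.

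For part (i), I apply $\Hom_{\Pro(\cC)}(-,\bG_a)$ to the universal affine extension (\ref{eqn:LP}). The vanishings $\Hom(A,\bG_a)=0$ (an abelian variety has no nontrivial morphism to an affine group), $\Hom(P(A),\bG_a)=0$ (by the $C\dashv Q$ adjunction together with $Q(\bG_a)=0$), and $\Ext^1_{\Pro(\cC)}(P(A),\bG_a)=0$ (projectivity of $P(A)$) yield $\Hom_{\Pro(\cL)}(L(A),\bG_a)\cong \Ext^1_{\Pro(\cC)}(A,\bG_a)=\Ext^1_\cC(A,\bG_a)$, the last equality because $\cC$ is closed under extensions inside $\Pro(\cC)$. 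The classical isomorphism of Rosenlicht then identifies this $\Ext$ group with $H^1(A,\cO_A)$ in any characteristic. Finally, applying $\Hom(-,\bG_a)$ to (\ref{eqn:MLU}) and using that a group of multiplicative type has no nonzero morphism to a unipotent group, the projection $L(A)\to U(A)$ induces an isomorphism $\Hom_{\Pro(\cL)}(U(A),\bG_a)\cong \Hom_{\Pro(\cL)}(L(A),\bG_a)$. Compatibility with the action of $\End_\cC(\bG_a)$ is immediate from functoriality in the second variable.

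For part (ii), in characteristic zero every commutative unipotent algebraic group is a vector group and every commutative pro-unipotent algebraic group is a pro-vector group. Thus $\Hom_{\Pro(\cL)}(U(A),\bG_a)$ is naturally the continuous $k$-linear dual of $\Lie(U(A))$. Finite-dimensionality of $H^1(A,\cO_A)$ therefore forces $U(A)$ to be an algebraic vector group whose Lie algebra is $k$-dual to $H^1(A,\cO_A)$, and $\dim U(A)=\dim H^1(A,\cO_A)=\dim A$ by Serre duality.

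For part (iii), the decisive input is Brion's classification \cite[Thm.~2.7]{Br09}: over a perfect field of characteristic $p>0$, every smooth connected commutative anti-affine algebraic group is semi-abelian. By Lemma \ref{lem:ess}(ii), $L(A)=\lim_\alpha L_\alpha$ over essential epimorphisms $Y_\alpha\to A$ with $Y_\alpha$ anti-affine algebraic and $L_\alpha\in\cL$. Antiaffineness forces $Y_\alpha$ to be connected, so $L_\alpha$ is connected, and over the perfect field $k$ we may decompose $L_\alpha=M_\alpha\times U_\alpha$. It suffices to show each $U_\alpha$ is finite, since then $U(A)=\lim_\alpha U_\alpha\in\Pro(\cF)$. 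Let $U_\alpha^{\mathrm{sm}}$ denote the smoothification of $U_\alpha$, i.e., its quotient by the largest infinitesimal subgroup; infinitesimal subgroups being zero-dimensional, $\dim U_\alpha^{\mathrm{sm}}=\dim U_\alpha$. Pushing $Y_\alpha$ out along the surjection $L_\alpha\to U_\alpha^{\mathrm{sm}}$ (i.e., killing $M_\alpha$ together with the largest infinitesimal subgroup of $U_\alpha$) produces a short exact sequence $0\to U_\alpha^{\mathrm{sm}}\to G_\alpha\to A\to 0$ where $G_\alpha$ is anti-affine (as a quotient of the anti-affine $Y_\alpha$), smooth (an extension of the smooth groups $A$ and $U_\alpha^{\mathrm{sm}}$ over a perfect field), and where $U_\alpha^{\mathrm{sm}}$ is connected. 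Brion's theorem then forces $U_\alpha^{\mathrm{sm}}$ to be of multiplicative type; being unipotent it must vanish. Hence $\dim U_\alpha=0$, so $U_\alpha$ is infinitesimal and in particular finite.

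The main obstacle lies in part (iii): extracting a smooth connected unipotent quotient of $U_\alpha$ in a way that keeps the pushed-out extension both anti-affine and smooth requires a careful analysis of the infinitesimal structure over a perfect base, and one must also confirm that Brion's classification holds over arbitrary perfect (not merely algebraically closed) fields, which follows by descent since both antiaffineness and the semi-abelian property descend.
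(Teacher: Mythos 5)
Parts (i) and (ii) of your proposal are correct and follow essentially the paper's own route: (i) is the chain $\Hom_{\Pro(\cL)}(U(A),\bG_a)\cong\Hom_{\Pro(\cL)}(L(A),\bG_a)\cong\Ext^1_{\cC}(A,\bG_a)\cong H^1(A,\cO_A)$ obtained from (\ref{eqn:MLU}) and the universal property (\ref{eqn:univCL}) (which you re-derive, harmlessly, from (\ref{eqn:LP})), and (ii) spells out the content of the reference to \cite[IV.2.4.2]{DG}. For (iii) you take a genuinely different path. The paper's proof is a two-line counting argument: if $U(A)$ were not profinite, \cite[V.3.2.5]{DG} would give an epimorphism $U(A)\to\bG_a$, and composition with endomorphisms of $\bG_a$ would embed the infinite-dimensional ring $\End_{\cC}(\bG_a)=k[F]$ into $\Hom_{\Pro(\cL)}(U(A),\bG_a)\cong H^1(A,\cO_A)$, contradicting finite-dimensionality. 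Your route, via the classification of anti-affine groups in positive characteristic, is precisely the alternative that the paper mentions and deliberately sets aside at the start of Subsection \ref{subsec:P(A)}; it can be made to work, but as written it has two real gaps.

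First, you assume $k$ perfect. The lemma is needed, and is invoked, over imperfect fields (see the proof of Lemma \ref{lem:T(A)}(ii)), and no reduction to the perfect case is available a priori: the formation of $L(A)$ is only known to commute with separable algebraic extensions (Proposition \ref{prop:separable}). Fortunately perfectness is not actually required by your argument --- you never need the splitting $L_\alpha=M_\alpha\times U_\alpha$, only the exact sequence $0\to M_\alpha\to L_\alpha\to U_\alpha\to 0$ of \cite[IV.3.1.1]{DG}, which exists over any field --- but your text and your closing paragraph show that you treat perfectness as an ambient hypothesis, and it is not one. Second, the connectedness claims are false: the kernel of an epimorphism from a connected group need not be connected, and indeed $n_A:A\to A$ is itself an anti-affine extension of $A$ with disconnected kernel $A[n]$, so $L_\alpha$, $U_\alpha$ and $U_\alpha^{\rm sm}$ are in general disconnected. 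Hence the step ``Brion's theorem forces $U_\alpha^{\rm sm}$ to be of multiplicative type'' does not apply: a disconnected smooth unipotent subgroup of a semi-abelian variety need not vanish (e.g.\ the \'etale part of $A[p]$). What is true, and suffices, is that a unipotent subgroup of a semi-abelian variety meets the torus trivially and so embeds into the abelian quotient, hence is affine and proper, hence finite; this gives $\dim U_\alpha=0$ and $U_\alpha$ finite (not infinitesimal, as you assert --- it may have a nontrivial \'etale part). Finally, ``the largest infinitesimal subgroup'' does not exist for a non-finite group ($\bG_a$ has none); one should instead take $U_\alpha/\Ker(F^n)$ for $n\gg 0$, which is smooth of the same dimension.
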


\begin{proof}
(i) Since $\Hom_{\Pro(\cL)}(M(A),\bG_a) = 0$,
the exact sequence (\ref{eqn:MLU}) yields 
an isomorphism 
\[ \Hom_{\Pro(\cL)}(U(A),\bG_a) 
\stackrel{\cong}{\longrightarrow}
\Hom_{\Pro(\cL)}(L(A),\bG_a). \]
The latter is naturally isomorphic to 
$\Ext^1_{\Pro(\cC)}(A,\bG_a)$
in view of the isomorphism (\ref{eqn:univCL}). 
Moreover, we have natural isomorphisms
\[ \Ext^1_{\Pro(\cC)}(A,\bG_a) \cong
\Ext^1_{\cC}(A,\bG_a) \cong
H^1(A,\cO_A) \]
(see e.g. \cite[III.17]{Oort}).

(ii) This follows from (i) combined with
\cite[IV.2.4.2]{DG}.

(iii) Assume that $U(A)$ is not profinite. By
\cite[V.3.2.5]{DG}, there exists an epimorphism
$U(A) \to \bG_a$. This yields a monomorphism
$\End_{\cC}(\bG_a) \to H^1(A,\cO_A)$,
a contradiction since the right-hand side is
a finite-dimensional $k$-vector space.
\end{proof}

Next, we describe the part of multiplicative
type, $M(A)$. By Cartier duality (see
\cite[IV.1.3.6]{DG}), this amounts to 
determining the character group 
$X(M(A))$ as a module under the absolute
Galois group, $\Gamma = \Gal(k_s/k)$,
where $k_s$ denotes a separable closure
of $k$.

\begin{lemma}\label{lem:T(A)}

\begin{enumerate}

\item[{\rm (i)}] If $k$ is perfect, then
$X(M(A)) \cong X(L(A)) \cong \wA(k_s)$
as Galois modules, where $\wA$ denotes 
the dual abelian variety of $A$.

\item[{\rm (ii)}] For an arbitrary field $k$, we have
\[ X(L(A)^{\cF}) \cong X(M(A)) \otimes_{\bZ} \bQ
\cong X(L(A)) \otimes_{\bZ} \bQ \cong
\wA(k_s) \otimes_{\bZ} \bQ \]
as Galois modules.
\end{enumerate}

\end{lemma}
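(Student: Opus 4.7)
The plan for part (i): Since $k$ is perfect, the exact sequence (\ref{eqn:MLU}) splits, giving $L(A) \cong M(A) \oplus U(A)$. As unipotent groups admit no nontrivial characters (even over $k_s$), the projection induces an isomorphism $X(L(A)) \cong X(M(A))$ of Galois modules. To identify $X(L(A))$ with $\wA(k_s)$, apply the universal property (\ref{eqn:univCL}) with $G = \bG_m$, combined with the Weil--Barsotti formula $\Ext^1(A, \bG_m) \cong \wA$ as fppf sheaves; this yields $\Hom_{\Pro(\cL)}(L(A), \bG_m) \cong \wA(k)$. Applying the same reasoning over $k_s$ to $A_{k_s}$, and using the base-change compatibility $L(A)_{k_s} \cong L(A_{k_s})$, the Galois module $X(L(A)) = \Hom_{k_s}(L(A)_{k_s}, \bG_{m,k_s})$ is identified with $\wA(k_s)$, the Galois action being forced by naturality.

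The plan for part (ii): For arbitrary $k$, reduce to part (i) applied to $A_{k_s}$ over the perfect field $k_s$. This immediately gives $X(L(A_{k_s})) \cong \wA(k_s)$, and hence $X(L(A)) \otimes \bQ \cong \wA(k_s) \otimes \bQ$ (the $\otimes \bQ$ absorbs any remaining discrepancy between $L(A)_{k_s}$ and $L(A_{k_s})$, and similarly for the multiplicative parts, when $k$ is imperfect). To identify $X(L(A)^{\cF})$: by Cartier duality over $k_s$, quotients of $M(A)_{k_s}$ correspond to sub-Galois-modules of $X(M(A))$, with profinite quotients corresponding to torsion submodules. Since $L(A)^{\cF}$ is the kernel of the projection onto the largest profinite quotient $\varpi_0(L(A)) = \varpi_1(A)$, and the unipotent part of $L(A)^{\cF}$ contributes trivially to characters, we obtain $X(L(A)^{\cF}) \cong X(M(A)) / X(M(A))_{\tors}$. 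Finally, $\wA(k_s) = \wA(\bar{k})$ (by smoothness of $\wA$) is a divisible abelian group, so $\wA(k_s)/\wA(k_s)_{\tors}$ is a $\bQ$-vector space which coincides with $\wA(k_s) \otimes \bQ$.

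The main obstacle is the base-change compatibility $L(A)_{k_s} \cong L(A_{k_s})$, equivalently the commutation of the projective cover $P$ with separable algebraic extensions. This is the technical heart of both parts; it may be established via the universal characterization combined with a Galois descent argument for $\Pro(\cL)$, or by invoking the base-change results for the profinite homotopy functors proven elsewhere in the paper. A secondary subtlety in (ii) is checking that the identification $X(L(A)^{\cF}) \cong X(M(A))/X(M(A))_{\tors}$ really holds over an imperfect field, where the multiplicative--unipotent decomposition of $L(A)$ does not split; the tensoring with $\bQ$ on both sides is precisely what allows one to bypass this defect.
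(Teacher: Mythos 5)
Your part (i) is essentially correct, but it takes a detour that the paper avoids: instead of base-changing to $k_s$ and invoking $L(A)_{k_s} \cong L(A_{k_s})$ (which does hold by Proposition \ref{prop:separable}, though that is a forward reference and, as you note, the technical heart of your argument), the paper applies the universal property (\ref{eqn:univCL}) with $G = T$ an arbitrary torus \emph{over $k$ itself}, uses the Galois-equivariant form of the Weil--Barsotti formula $\Ext^1_{\cC}(A,T) \cong \Hom^{\Gamma}(X(T),\wA(k_s))$, and reads off $X(M(A))$ by Cartier duality. This removes the base-change obstacle entirely.

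Part (ii) has a genuine gap. Your reduction rests on ``part (i) applied to $A_{k_s}$ over the perfect field $k_s$,'' but $k_s$ is \emph{not} perfect when $k$ is imperfect: for $t \in k \setminus k^p$, the extension $k(t^{1/p})/k$ is purely inseparable, so $t$ remains a non-$p$th power in $k_s$. Hence part (i) is unavailable over $k_s$, and the parenthetical claim that ``$\otimes\, \bQ$ absorbs any remaining discrepancy'' is not an argument --- the discrepancy has to be identified and bounded. What is actually needed, and what the paper supplies, is: (a) in characteristic $p>0$ the group $U(A)$ is profinite (Lemma \ref{lem:U(A)} (iii)), so $L(A)^{\cF} \subset M(A)$ is of multiplicative type and, since $\varpi_0(L(A)^{\cF}) = \varpi_1(L(A)^{\cF}) = 0$ (Lemma \ref{lem:acyclic}), its character group is already a $\bQ$-vector space by \cite[V.3.5.2]{DG}; and (b) the failure of the splitting of (\ref{eqn:MLU}) is controlled by $\Ext^1_{\Pro(\cL)}(U(A),T)$, which is a direct limit of groups killed by powers of $p$ and hence dies after $\otimes_{\bZ}\bQ$, giving $\Hom_{\Pro(\cL)}(M(A),T)\otimes_{\bZ}\bQ \cong \Hom_{\Pro(\cL)}(L(A),T)\otimes_{\bZ}\bQ$; one then concludes with Weil--Barsotti over $k$ as in (i). Your sketch names neither (a) nor (b); moreover your identification $X(L(A)^{\cF}) \cong X(M(A))/X(M(A))_{\tors}$ tacitly assumes $L(A)^{\cF} = M(A)^{\cF}$ (equivalently, that the unipotent part does not interfere), which is exactly the content of (a) in the imperfect case and is false to take for granted, since the multiplicative--unipotent decomposition of $L(A)$ need not split there.
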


\begin{proof}
(i) Recall that $L(A) \cong M(A) \times U(A)$.
In view of the isomorphism (\ref{eqn:univCL}),
this yields a natural isomorphism for any torus $T$
\[ \Hom_{\Pro(\cL)}(M(A),T) \cong \Ext^1_{\cC}(A,T). \]
By the Weil-Barsotti formula (see e.g.
\cite[III.17,III.18]{Oort}), there is a natural
isomorphism
\[ \Ext^1_{\cC}(A,T) \cong
\Hom^{\Gamma}(X(T), \wA(k_s)). \]
Combining these isomorphisms yields the
statement by using Cartier duality.

(ii) If $k$ is perfect, then 
$L(A)^{\cF} \cong M(A)^{\cF} \times U(A)^{\cF}$
with an obvious notation. Thus, 
$X(L(A)^{\cF}) \cong X(M(A)^{\cF})$.
In view of (i) combined with \cite[V.3.5.2]{DG},
the Galois module $X(M(A)^{\cF})$ is
the quotient of $\wA(k_s)$ by its torsion
subgroup. As $\wA(k_s)$ is divisible,
this yields the statement.

Next, if $k$ is imperfect (in particular,
of characteristic $p > 0$), then 
$L(A)^{\cF} \subset M(A)$ by
Lemma \ref{lem:U(A)} (iii); in particular,
$L(A)^{\cF}$ is of multiplicative type. Also,
$\varpi_0(L(A)^{\cF}) = 0 = \varpi_1(L(A)^{\cF})$
by Lemma \ref{lem:acyclic}. In view of
\cite[V.3.5.2]{DG} again, it follows that 
$X(L(A)^{\cF})$ is a $\bQ$-vector space. Thus,
so is $\Hom_{\Pro(\cL)}(L(A)^{\cF},T)$ for any 
torus $T$. Since $M(A)/L(A)^{\cF}$ is profinite, 
this yields a natural isomorphism
\[ \Hom_{\Pro(\cL)}(M(A),T) \otimes_{\bZ} \bQ
\cong \Hom_{\Pro(\cL)}(L(A)^{\cF},T). \]

Also, the exact sequence (\ref{eqn:MLU}) 
yields an exact sequence
\[ 0 \longrightarrow \Hom_{\Pro(\cL)}(L(A),T)  
\longrightarrow \Hom_{\Pro(\cL)}(M(A),T) 
\longrightarrow \Ext^1_{\Pro(\cL)}(U(A),T). \]
Recall that $U(A)$ is a filtered inverse limit of
unipotent algebraic groups $U_i$. Then
\[ \Ext^1_{\Pro(\cL)}(U(A),T) \cong
\lim_{\to} \Ext^1_{\Pro(\cL)}(U_i,T) \]
in view of \cite[V.2.3.9]{DG}. Moreover,
since each $U_i$ is killed by a power of $p$,
so is each $\Ext^1_{\Pro(\cL)}(U_i,T)$.
As a consequence, we obtain a natural 
isomorphism
\[ \Hom_{\Pro(\cL)}(M(A),T) \otimes_{\bZ} \bQ
\cong
\Hom_{\Pro(\cL)}(L(A),T) \otimes_{\bZ} \bQ,\]
and hence a natural isomorphism
\[ \Hom_{\Pro(\cL)}(L(A)^{\cF},T) \cong
\Hom_{\Pro(\cL)}(L(A),T) \otimes_{\bZ} \bQ. \]
Arguing as in (i) completes the proof.  
\end{proof}

We may summarize the main results of this 
subsection in the following:

\begin{theorem}\label{thm:P(A)}
Let $A$ be an abelian variety over a field $k$
with characteristic $p \geq 0$ and separable closure
$k_s$.

\begin{enumerate}

\item[{\rm (i)}] The universal profinite cover $\tA$
is the limit of the filtered inverse system of multiplication
maps $(A,n_A)_{n \geq 1}$. 

\item[{\rm (ii)}] The exact sequence {\rm (\ref{eqn:LFP})},
$0 \to L(A)^{\cF} \to P(A) \to \tA \to 0$,
is a projective resolution of $\tA$.

\item[{\rm (iii)}] If $p = 0$ then 
$L(A)^{\cF} = M(A)^{\cF} \times U(A)$,
where $M(A)^{\cF}$ is the group of multiplicative type
with character group $\wA(k_s) \otimes_{\bZ} \bQ$,
and $U(A)$ is the unipotent group with Lie algebra 
dual of $H^1(A,\cO_A)$.

\item[{\rm (iv)}] If $p > 0$ then 
$L(A)^{\cF} = M(A)^{\cF}$,
where the latter is defined as above.

\end{enumerate}

\end{theorem}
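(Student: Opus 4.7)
The plan is essentially to assemble the preceding results, which do most of the work.

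For (i), I would simply apply Lemma~\ref{lem:divisible}(iii) to $A$, noting that an abelian variety is divisible in $\cC$ (multiplication by $n$ is an isogeny, hence an epimorphism); this realizes $\tA$ as the inverse limit of the filtered system $(A,n_A)_{n\geq 1}$.

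For (iii) and (iv), the strategy is to combine Lemma~\ref{lem:U(A)} (on the unipotent part $U(A)$) with Lemma~\ref{lem:T(A)} (on the character group of the multiplicative-type part). In characteristic $p=0$ the field is perfect, so the exact sequence (\ref{eqn:MLU}) splits canonically and gives $L(A)^{\cF}\cong M(A)^{\cF}\times U(A)^{\cF}$. The multiplicative summand has character group $\wA(k_s)\otimes_{\bZ}\bQ$ by Lemma~\ref{lem:T(A)}(ii); the unipotent summand $U(A)$, by Lemma~\ref{lem:U(A)}(ii), is the vector group with Lie algebra dual to $H^1(A,\cO_A)$, which is torsion-free and divisible, so it has no nonzero profinite quotient and $U(A)^{\cF}=U(A)$. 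This yields (iii). In characteristic $p>0$, Lemma~\ref{lem:U(A)}(iii) says $U(A)$ is already profinite, so its $\cF$-perpendicular part vanishes; when $k$ is perfect the splitting gives (iv) immediately, and when $k$ is imperfect Lemma~\ref{lem:T(A)}(ii) directly exhibits $L(A)^{\cF}$ as a subgroup of $M(A)$ with character group $\wA(k_s)\otimes_{\bZ}\bQ$, yielding (iv) without a splitting.

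For (ii), exactness of (\ref{eqn:LFP}) is already in hand, and $P(A)$ is projective by Proposition~\ref{prop:affine}. What remains is to show that $L(A)^{\cF}$ is projective in $\Pro(\cC)$; by Lemma~\ref{lem:CL} and \cite[Lem.~2.14]{Br17b}, it suffices to prove projectivity in $\Pro(\cL)$. The descriptions from (iii), (iv) exhibit $L(A)^{\cF}$ as a group of multiplicative type whose character module is a $\bQ$-vector space (hence divisible and torsion-free, which makes the group projective among pro-groups of multiplicative type via Cartier duality), times a vector group in the characteristic-zero case (projective among pro-unipotent groups). These factors are projective in $\Pro(\cL)$, hence so is their product.

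The main point requiring care is the imperfect-field case of (iv): there the multiplicative-unipotent decomposition of $L(A)$ is unavailable, so one cannot form $\cF$-perpendicular parts factor-by-factor. The remedy is Lemma~\ref{lem:T(A)}(ii), which works directly with the character group of $L(A)^{\cF}$ (via vanishing of $\varpi_0,\varpi_1$ on $L(A)^{\cF}$ from Lemma~\ref{lem:acyclic}, plus the fact that $U(A)$ is profinite) and bypasses the need for a splitting entirely.
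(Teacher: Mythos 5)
Your proposal is correct and follows essentially the same route as the paper: part (i) from Lemma~\ref{lem:divisible}(iii), parts (iii)--(iv) by combining Lemmas~\ref{lem:U(A)} and \ref{lem:T(A)} (with the splitting of (\ref{eqn:MLU}) over a perfect field and the direct character-group computation of Lemma~\ref{lem:T(A)}(ii) in the imperfect case), and part (ii) reduced to the projectivity of $L(A)^{\cF}$ in $\Pro(\cL)$, established via the multiplicative-type/unipotent descriptions exactly as in the paper (which cites \cite[V.3.5.2, V.3.6.5]{DG} for these two projectivity facts).
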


\begin{proof}
All the assertions follow from Lemmas \ref{lem:divisible}, 
\ref{lem:U(A)} and \ref{lem:T(A)}, except for 
the projectivity of $L(A)^{\cF}$ in $\Pro(\cC)$, or
equivalently in $\Pro(\cL)$. If $p > 0$, then the group
$L(A)^{\cF}$ is of multiplicative type and its character
group is a $\bQ$-vector space, hence the desired assertion 
by \cite[V.3.5.2]{DG}. If $p = 0$, then we use in addition
the fact that every unipotent group is projective
in $\Pro(\cL)$ (see e.g. \cite[V.3.6.5]{DG}.
\end{proof}

\subsection{Structure of indecomposable projectives}
\label{subsec:indproj}

We still consider an arbitrary ground field $k$, of
characteristic $p \geq 0$.

\begin{proposition}\label{prop:projind}
The indecomposable projectives of $\Pro(\cC)$ 
are exactly:

\begin{enumerate}

\item[{\rm (i)}] the $P(A)$, where $A$ is a simple
abelian variety,

\item[{\rm (ii)}] the universal profinite covers 
of the simple tori,

\item[{\rm (iii)}] the additive group $\bG_a$ if $p = 0$, 
resp.~the universal profinite cover of the Witt group scheme
$W : = \lim_{\leftarrow} W_n$ if $p > 0$,

\item[{\rm (iv)}] the indecomposable projectives of
$\Pro(\cF)$.

\end{enumerate}

\end{proposition}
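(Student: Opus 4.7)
The plan is to apply Corollary~\ref{cor:indproj} twice: first to the pair $(\cC,\cL)$, and then to the pair $(\cL,\cF)$. This reduces the classification to identifying the indecomposable projectives of the semi-simple quotient $\Pro(\cC)/\Pro(\cL) \cong \Pro(\ucA)$ and of the quotient $\Pro(\cL)/\Pro(\cF)$.

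For the first reduction, Lemma~\ref{lem:CL} gives the lifting property for $(\cC,\cL)$ and Lemma~\ref{lem:isogeny} identifies $\Pro(\cC)/\Pro(\cL)$ with the semi-simple category $\Pro(\ucA)$. The indecomposable projectives of a semi-simple category are exactly its simple objects, which here are the simple abelian varieties (viewed up to isogeny). Lifting back via the cosection functor, i.e.\ via $P(-) = C^{\cC,\cL} Q^{\cC,\cL}(-)$, yields exactly the objects $P(A)$ in class (i). The remaining indecomposable projectives of $\Pro(\cC)$ are then the indecomposable projectives of $\Pro(\cL)$.

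For the second reduction, I would verify that $(\cL,\cF)$ satisfies the lifting property; this can be deduced from \cite[Thm.~1.1]{Br15} (applied inside $\cL$), or directly by noting that any epimorphism from a commutative linear algebraic group onto a finite one admits a finite subgroup-section. Applying Corollary~\ref{cor:indproj} splits the indecomposable projectives of $\Pro(\cL)$ into those of $\Pro(\cF)$ (class (iv)) and universal profinite covers of objects $X \in {^\perp \Pro(\cF)} \cap \Pro(\cL)$ whose image $Q^{\cL,\cF}(X)$ is indecomposable projective in $\Pro(\cL)/\Pro(\cF) \cong \Pro(\cL/\cF)$. To classify the latter, I would show that in the isogeny category $\cL/\cF$ the standard dichotomy survives, i.e.\ $\cL/\cF$ decomposes as the product of the isogeny category of tori and the isogeny category of unipotent groups modulo finite; this rests on the fact that $\Hom$ and $\Ext^1$ between multiplicative-type and unipotent groups are torsion, hence killed after passing to $\cL/\cF$. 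The torus factor is semi-simple with simple objects the simple tori (corresponding to Galois-irreducible rational character modules), and lifting these via $C^{\cL,\cF}$ produces their universal profinite covers, giving class (ii). For the unipotent factor, in characteristic $0$ it is equivalent to finite-dimensional $k$-vector spaces with unique simple object $\bG_a$, which is already projective in $\Pro(\cL)$ and in ${^\perp \Pro(\cF)}$; in characteristic $p>0$ it has (up to isomorphism) a unique indecomposable projective, whose lift is the universal profinite cover $\tW$ of the Witt pro-group $W = \lim_\leftarrow W_n$. This produces class (iii).

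The main obstacle will be the characteristic $p>0$ case over possibly imperfect fields: one must check that the multiplicative-type/unipotent decomposition persists in $\cL/\cF$ even when it fails in $\cL$ itself, and that $\tW$ really is the correct indecomposable projective of the unipotent part. The perfect-field case is handled by \cite[V.3.7]{DG}, and reducing the imperfect case to it requires that $\Pro(\cL)/\Pro(\cF)$ is insensitive, in the relevant sense, to separable/inseparable subtleties — something accessible through the isogeny formalism of \cite{Br17a} and the pro-category bookkeeping of \cite{Br17b}. Once this is in place, bijecting simple tori with Galois-irreducible $\bQ$-lattices and checking that $\tW$ is indecomposable (via its endomorphism ring being local, coming from $\End(W)$) completes the proof.
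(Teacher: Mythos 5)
Your proposal is correct, but it follows a genuinely different decomposition from the paper's. The paper applies Corollary~\ref{cor:indproj} once, to the pair $(\cC,\cF)$, and then invokes the structure theory of the isogeny category $\cC/\cF$ from \cite{Br17a}: every object there has finite length, the simple objects are $\bG_a$, the simple tori and the simple abelian varieties, tori are projective there, $\bG_a$ is projective exactly when $p=0$, and the projective cover of $\bG_a$ in $\Pro(\cC/\cF)$ for $p>0$ is $W$ (with the imperfect case reduced to the perfect one via the invariance of $\cC/\cF$ under purely inseparable extensions). You instead apply the corollary twice, first to $(\cC,\cL)$ and then to $(\cL,\cF)$. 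Your first step is arguably cleaner: class (i) drops out immediately from the semi-simplicity of $\Pro(\cC)/\Pro(\cL)\cong\Pro(\ucA)$ (Lemma~\ref{lem:isogeny}) together with Proposition~\ref{prop:affine}, whereas in the paper's route one must still identify the universal profinite cover of the indecomposable projective of $\Pro(\cC/\cF)$ lying over a simple abelian variety with $P(A)$. The price is that your second step requires the multiplicative/unipotent splitting of $\cL/\cF$ over an arbitrary (possibly imperfect) field, which you correctly identify as the main obstacle and which is indeed available through the torsion argument you sketch and the results of \cite{Br17a}; the paper avoids isolating $\cL/\cF$ by working with $\cC/\cF$ directly. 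Two small points you should make explicit: (a) the lifting property for $(\cL,\cF)$ does follow from \cite[Thm.~1.1]{Br15} since $\cL$ is a Serre subcategory of $\cC$ containing $\cF$; and (b) the universal profinite covers produced by your second application are a priori computed relative to $(\cL,\cF)$, so you should note that they agree with those relative to $(\cC,\cF)$ --- this holds because extensions of objects of $\Pro(\cL)$ by objects of $\Pro(\cF)$ all lie in $\Pro(\cL)$, so the relevant $\Hom$ and $\Ext^1$ groups, and hence the universal extensions of Corollary~\ref{cor:sc}, coincide.
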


\begin{proof}
Applying Corollary \ref{cor:indproj} to the pair $(\cC,\cF)$, we see
that the indecomposable projectives of $\Pro(\cC)$ are exactly 
those of $\Pro(\cF)$ and the universal profinite covers $\tP$, 
where $P$ is an indecomposable projective of $\Pro(\cC/\cF)$. 
Also, every object of $\cC/\cF$ has finite length (see 
\cite[Prop.~3.2]{Br17a}). In view of \cite[V.2.4.6]{DG}, it follows 
that every indecomposable projective of $\Pro(\cC/\cF)$
is the projective cover of a simple object of $\cC/\cF$, unique 
up to isomorphism.

Next, the simple objects of $\cC/\cF$ are exactly $\bG_a$,
the simple tori and the simple abelian varieties (see 
\cite[Prop.~3.2]{Br17a} again). Moreover,
every torus is projective in $\cC/\cF$, and hence in 
$\Pro(\cC/\cF)$; also, $\bG_a$ is projective if and only if
$p = 0$ (see \cite[Thm.~5.14]{Br17a}. The universal profinite
cover of a torus $T$ is the group of multiplicative type 
with character group $X(T) \otimes_{\bZ} \bQ$, in view of
\cite[V.3.5.2]{DG}. Also, $\widetilde{\bG_a} = \bG_a$ if 
$p = 0$, as follows e.g. from Lemma \ref{lem:divisible}. 
If $p > 0$ and $k$ is perfect, then the projective cover 
of $\bG_a$ in $\cL$ (or equivalently, in $\cC$) is 
the universal profinite cover $\tW$ (see \cite[V.3.7.5]{DG}); 
equivalently, $W$ is the projective cover of $\bG_a$ in 
$\Pro(\cC/\cF)$. But the category $\cC/\cF$ is invariant under 
base change by purely inseparable field extensions (see 
\cite[Thm.~3.11]{Br17a}); moreover, $W$ is obtained by base 
change of a group scheme of finite type over $\bZ$, and hence 
makes sense over an arbitrary field $k$. Thus, $\tW$ is the 
projective cover of $\bG_a$ in that setting, too.
\end{proof}

\begin{remark}\label{rem:indproj} 
We now describe the indecomposable projectives of 
the profinite category $\Pro(\cF)$ in terms of those of
the pro-\'etale category $\Pro(\cE)$. For this, we may 
assume that $p > 0$, since $\cF = \cE$ if $p = 0$.

We will adapt the arguments in the proof of Proposition 
\ref{prop:projind}
twice. First, consider the pair $(\cF,\cI)$, where $\cI$ 
denotes the full subcategory of $\cF$ consisting of the 
infinitesimal algebraic groups; then $\cI$ is a Serre 
subcategory of $\cF$, and the pair $(\cF,\cI)$ satisfies 
the lifting property in view of \cite[Lem.~2.2]{Br17a}. 
Also, the quotient category $\cF/\cI$ is equivalent to 
the category $\cE$ of \'etale algebraic groups, 
by assigning to any finite algebraic group its largest 
\'etale quotient. It follows that the functor
\[ \pi_0^{\cF,\cE} : \Pro(\cF) \longrightarrow \Pro(\cE) \]
yields an equivalence of categories 
$\Pro(\cF/\cI) \cong \Pro(\cE)$. Thus,
\emph{the indecomposable projectives of $\Pro(\cF)$
are exactly those of $\Pro(\cI)$ and the 
universal pro-infinitesimal covers $\tP$, where
$P$ is an indecomposable projective of $\cE$.}

Next, consider the pair $(\cI,\cI_m)$, where 
$\cI_m$ denotes the full subcategory of $\cI$
consisting of (infinitesimal algebraic) groups of 
multiplicative type. Then again, $\cI_m$ is
a Serre subcategory; moreover, 
$\cI/\cI_m \cong \cI_u$, the full subcategory
of $\cI$ consisting of unipotent groups
(see \cite[IV.3.1.1]{DG}). Also, $\cI_u$ has 
a unique simple object $\alpha_p$, the kernel 
of the Frobenius endomorphism of $\bG_a$
(see \cite[IV.2.2.5]{DG}).

We now show that the pair $(\cI,\cI_m)$ satisfies 
the lifting property. Consider an epimorphism 
$f : G \to H$ in $\cI$, where $H$ is multiplicative. 
Denote by $M$ the largest multiplicative subgroup 
of $G$; then $G/M$ is unipotent, hence so is $H/f(M)$. 
It follows that $H/f(M) = 0$, i.e., the composition
$M \to G \to H$ is an epimorphism as well.

As a consequence, we see that 
\emph{the indecomposable projectives of $\Pro(\cI)$
are exactly those of $\Pro(\cI_m)$ and the universal 
multiplicative cover $\tP$, where $P$ is the
projective cover of $\alpha_p$ in $\cI_u$.}

The above results take a much simpler form when 
$k$ is perfect: then we have an equivalence of
categories 
\[ \cF \cong \cI_m \times \cI_u \times \cE \]
in view of \cite[IV.3.5.9]{DG}. Thus, the indecomposable 
projectives of $\Pro(\cF)$ are exactly those of 
$\Pro(\cI_m)$, $\Pro(\cI_u)$ and $\Pro(\cE)$. 
\end{remark}

\subsection{Field extensions}
\label{subsec:field}

For any field extension $k'/k$, we denote by 
\[ \otimes_k \, k' : \cC_k \longrightarrow \cC_{k'}, 
\quad G \longmapsto G_{k'} \] 
the associated base change functor. Then 
$\otimes_k k'$ is exact and faithful; hence it
extends uniquely to an exact functor
$\Pro(\cC_k) \to \Pro(\cC_{k'})$
which commutes with filtered inverse limits
(see e.g. \cite[Prop.~6.1.9, Cor.~8.6.8]{KS}).
We still denote this extension by $\otimes_k \, k'$.

\begin{lemma}\label{lem:change}
The functor $\otimes_k \, k' : \Pro(\cC_k) \to \Pro(\cC_{k'})$ 
is faithful. If $k'/k$ is separable algebraic, then 
$\otimes_k \, k'$ sends projectives to projectives.
\end{lemma}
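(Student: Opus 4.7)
The plan is to prove faithfulness in full generality, and then handle preservation of projectives under a separable algebraic extension by first treating the finite separable case via Weil restriction, and then bootstrapping to the general case.

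For \emph{faithfulness}, I would first verify that $\otimes_k k' : \cC_k \to \cC_{k'}$ is itself faithful. If $f : G \to H$ is a nonzero morphism in $\cC_k$, its scheme-theoretic image is a nonzero closed subgroup scheme of $H$, and this remains nonzero after base change to $k'$ because $\Spec k' \to \Spec k$ is faithfully flat. Extension to the pro-categories is then formal: for $X = \lim_{\leftarrow i} X_i$ and $Y = \lim_{\leftarrow j} Y_j$ in $\Pro(\cC_k)$, the Hom sets are computed as double limits $\lim_{\leftarrow i} \lim_{\to j} \Hom_{\cC_k}(X_i, Y_j)$, and the base change is induced by the componentwise injective maps $\Hom_{\cC_k}(X_i, Y_j) \hookrightarrow \Hom_{\cC_{k'}}((X_i)_{k'}, (Y_j)_{k'})$. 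Since filtered colimits and arbitrary inverse limits of injections of abelian groups remain injective, faithfulness passes to $\Pro(\cC_k)$.

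For \emph{preservation of projectives}, I would first treat the case $[k':k] < \infty$. The Weil restriction $\R_{k'/k} : \cC_{k'} \to \cC_k$ is the right adjoint of $\otimes_k k'$, and since $\Spec k' \to \Spec k$ is finite étale, it agrees with the pushforward of fppf abelian sheaves along that morphism, and hence is exact. Extending uniquely to an exact right adjoint $\Pro(\cC_{k'}) \to \Pro(\cC_k)$ that commutes with inverse limits, this implies by a standard categorical argument that $\otimes_k k'$ sends projectives to projectives. For a general separable algebraic extension, I would write $k' = \bigcup_\alpha k_\alpha$ with $k_\alpha/k$ finite separable. Given a projective $G \in \Pro(\cC_k)$ and an object $Y' \in \cC_{k'}$, the latter descends to some $Y_\alpha \in \cC_{k_\alpha}$ with $Y' \cong (Y_\alpha)_{k'}$; a colimit argument patterned on \cite[V.2.3.9]{DG} should then identify
\[ \Ext^1_{\Pro(\cC_{k'})}(G_{k'}, Y') \cong \lim_{\to \, \beta \geq \alpha} \Ext^1_{\Pro(\cC_{k_\beta})}(G_{k_\beta}, (Y_\alpha)_{k_\beta}), \]
each term of which vanishes by the finite case applied to $k_\beta/k$, since $G_{k_\beta}$ is then projective in $\Pro(\cC_{k_\beta})$. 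Checking that this vanishing suffices for $G_{k'}$ to be projective in $\Pro(\cC_{k'})$ completes the argument.

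The hard part will be making the descent step in the general case rigorous, i.e., identifying $\Ext^1$ in $\Pro(\cC_{k'})$ with the filtered colimit of its counterparts over finite subextensions $k_\beta$. A clean alternative, if that limit identification proves delicate, would be to reduce via the product decomposition of projectives (recalled in Proposition \ref{prop:projind}) and check preservation case by case on each type of indecomposable projective, using that their explicit constructions (universal profinite covers, affine projective covers $P(A)$ described in Theorem \ref{thm:P(A)}, projectives in $\Pro(\cF)$) all interact compatibly with separable algebraic base change.
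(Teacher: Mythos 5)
Your overall strategy is workable, but you take a genuinely different route from the paper on the projectivity claim, and the step you flag as ``the hard part'' is precisely where the paper's argument is more economical. For faithfulness, the paper argues on images rather than Hom-sets: if $f_{k'} = 0$ then $\Im(f)_{k'} = 0$ by exactness, and writing $\Im(f) = \lim_{\leftarrow} Z_i$ with epimorphic projections forces $(Z_i)_{k'} = 0$, hence $Z_i = 0$. Your double-limit argument is an acceptable substitute, though the indices in your formula are transposed: the filtered colimit must run over the source's index and the inverse limit over the target's, i.e. $\lim_{\leftarrow j} \lim_{\to i} \Hom_{\cC_k}(X_i, Y_j)$. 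For preservation of projectives, the paper does not split into a finite case plus a colimit of $\Ext^1$'s. It descends the lifting problem itself: by [DG, V.2.3.5] one only needs to lift $g : P_{k'} \to H$ through an epimorphism $f : G \to H$ with $G, H \in \cC_{k'}$; both $f$ and a representative $g_i : (P_i)_{k'} \to H$ of $g$ are defined over a finite subextension $K/k$, and the adjunction $\Hom_{\cC_K}((P_i)_K, H_K) = \Hom_{\cC_k}(P_i, \R_{K/k}(H_K))$, combined with the fact that $\R_{K/k}$ preserves epimorphisms for $K/k$ finite separable, reduces everything to the projectivity of $P$ over $k$. This sidesteps any identification of $\Ext^1$ groups.

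If you insist on your route, there are two circularity traps to avoid. First, the identification $\Ext^1_{\Pro(\cC_{k'})}(G_{k'}, Y') \cong \lim_{\to} \Ext^1_{\Pro(\cC_{k_\beta})}(G_{k_\beta}, (Y_\alpha)_{k_\beta})$ is essentially the paper's Lemma \ref{lem:finite}, whose proof there base-changes a projective resolution and therefore \emph{uses} the present lemma; you would have to establish it independently, via Yoneda extensions, [DG, V.2.3.9] and a spreading-out argument for extensions of algebraic groups. This is doable but amounts to more work than the paper's direct descent of the lifting data. Second, your proposed ``clean alternative'' via the classification of indecomposable projectives is also circular in the paper's logical order: checking that $P(A)_{k'} \cong P(A_{k'})$ and that universal profinite covers commute with separable base change is the content of Proposition \ref{prop:separable}, which is itself deduced from the present lemma.
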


\begin{proof}
Let $X, Y \in \Pro(\cC_k)$ and 
$f \in \Hom_{\Pro(\cC_k)}(X,Y)$ such that
$f_{k'} = 0$. Then $\Im(f_{k'}) = 0$.
Since $\otimes_k \, k'$ is exact, this means that
$\Im(f)_{k'} = 0$. Let $Z := \Im(f)$, then
$Z = \lim_{\leftarrow} Z_i$ (filtered inverse limit),
where $Z_i \in \cC_k$ and $Z \to Z_i$ is an
epimorphism for all $i$. Thus, $Z_{k'}$
is the filtered inverse limit of the $(Z_i)_{k'}$,
and $Z_{k'} \to (Z_i)_{k'}$ is an epimorphism
for all $i$ as well. As $Z_{k'} = 0$, it follows that 
$(Z_i)_{k'} = 0$ for all $i$. So $Z_i = 0$
and $Z = 0$, that is, $f = 0$.
This proves that $\otimes_k \, k'$ is faithful. 

Next, assume that $k'/k$ is separable algebraic
and let $P \in \Pro(\cC_k)$ be projective.
To show that $P_{k'}$ is projective in $\Pro(\cC_{k'})$, 
it suffices to check that given an epimorphism 
$f: G \to H$  and a morphism $g : P_{k'} \to H$,
where $G,H \in \cC_{k'}$, 
there exists a morphism $h : P_{k'} \to G$ in $\cC_{k'}$ 
such that $g = f \circ h$ (see \cite[V.2.3.5]{DG}). As above, 
we have $P = \lim_{\leftarrow} P_i$ (filtered inverse limit),
where $P_i \in \cC_k$ and $P \to P_i$ is an
epimorphism for all $i$. So $g$ lies in
\[ \Hom_{\Pro(\cC_{k'})}(P_{k'}, H)
= \Hom_{\Pro(\cC_{k'})}(\lim_{\leftarrow}(P_i)_{k'}, H)
= \lim_{\to} \Hom_{\Pro(\cC_{k'})}((P_i)_{k'}, H). \]
Thus, $g$ is represented by a morphism
$g_i : (P_i)_{k'} \to H$ for some $i$. Since the schemes
$G,H,(P_i)_{k'}$ are of finite type over $k'$, 
the morphisms $f: G \to H$ and $g_i : (P_i)_{k'} \to H$
are ``defined over some finite subextension $K/k$'',
i.e., there exist such a subextension and
morphisms $f_K : G_K \to H_K$, 
$(g_i)_K : (P_i)_K \to H_K$ in $\cC_K$ such that
$f = f_K \otimes_K \, k'$ and 
$g_i = (g_i)_K \otimes_K \, k'$. Then
\[ (g_i)_K \in \Hom_{\cC_K}((P_i)_K, H_K)
= \Hom_{\cC_k}(P_i, \R_{K/k}(H_K)), \]
where $\R_{K/k}$ denotes the Weil restriction
(see e.g. \cite[I.1.6.6]{DG} or \cite[App.~B]{CGP}). 
As $K/k$ is finite and separable and 
$f_K : G_K \to H_K$ is an epimorphism, the map 
$\R_{K/k}(f_K) : \R_{K/k}(G_K) \to \R_{K/k}(H_K)$
is an epimorphism as well (see \cite[III.5.7.9]{DG}). 
Since $P$ is projective, it follows that
$(g_i)_K$ lifts to a morphism  
\[ (f_j)_K \in \Hom_{\cC_k}(P_j, \R_{K/k}(G_K))
= \Hom_{\cC_K}((P_j)_K, G_K) \]
for some $j$. This yields a lift 
$f_j \in  \Hom_{\cC_{k'}}((P_j)_{k'}, G_{k'})$
of $g_i$, and in turn the desired lift 
$f \in  \Hom_{\cC_k'}(P_{k'}, G_{k'})$ of $g$.
\end{proof}

\begin{remark}\label{rem:change}
In the setting of affine group schemes, the fact that 
the base change functor $\otimes_k k'$ preserves projectives 
for any separable algebraic extension $k'$ of $k$ is due to 
Demazure and Gabriel (see \cite[V.3.2.1]{DG}).
For arbitrary group schemes, this fact is stated and
used in \cite[p.~437]{Milne}, but the argument sketched 
there is flawed.
\end{remark}

We may now complete the proof of the main theorem:

\begin{proposition}\label{prop:separable}
For any $i \geq 0$, the functors $\pi_i^{\cC,\cL}$ and
$\varpi_i$ commute with base change under separable 
algebraic field extensions. Moreover, the same holds 
for the universal affine and profinite covers.
\end{proposition}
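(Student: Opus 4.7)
The plan rests on two inputs: (a) direct verification that $\pi_0^{\cC,\cL}$ and $\varpi_0 = \pi_0^{\cC,\cF}$ commute with base change along separable algebraic $k'/k$, and (b) Lemma \ref{lem:change}, which says $\otimes_k k'$ is exact and preserves projectives in this situation. Granted these, the higher homotopy functors follow from a standard projective-resolution argument. Choose any projective resolution $P_\bullet \to G$ in $\Pro(\cC_k)$; by (b) and exactness, $(P_\bullet)_{k'} \to G_{k'}$ is still a projective resolution in $\Pro(\cC_{k'})$. Hence
\[
\pi_i(G_{k'}) = H_i\bigl(\pi_0((P_\bullet)_{k'})\bigr) \cong H_i\bigl((\pi_0(P_\bullet))_{k'}\bigr) \cong \bigl(H_i \pi_0(P_\bullet)\bigr)_{k'} = \pi_i(G)_{k'},
\]
where the second isomorphism uses (a) applied to each projective $P_j$ and the third uses exactness of $\otimes_k k'$. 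The same scheme of argument handles both $\pi_i^{\cC,\cL}$ and $\varpi_i = \pi_i^{\cC,\cF}$.

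For (a), I reduce to $G \in \cC$ by combining Lemma \ref{lem:com} with the commutation of $\otimes_k k'$ with filtered inverse limits. For the pair $(\cC,\cL)$, the subobject $G^{\cL}$ is the largest anti-affine subgroup scheme of $G$, characterized by $\cO(G^{\cL}) = k$. By flatness, $\cO((G^{\cL})_{k'}) = \cO(G^{\cL}) \otimes_k k' = k'$, so $(G^{\cL})_{k'}$ is anti-affine and $(G^{\cL})_{k'} \subseteq (G_{k'})^{\cL}$; conversely $(G/G^{\cL})_{k'}$ is affine, forcing $(G_{k'})^{\cL} \subseteq (G^{\cL})_{k'}$. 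For the pair $(\cC,\cF)$, the subobject $G^{\cF}$ coincides with the identity component $G^0$ when $G \in \cC$ (since $G/G^0$ is finite étale), and $(G^0)_{k'} = (G_{k'})^0$ for separable algebraic $k'/k$ by standard results on identity components.

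For the universal covers, applying $\otimes_k k'$ to (\ref{eqn:long}) and using (a) on the outer terms yields an exact sequence
\[
0 \to \pi_1(G_{k'}) \to \tG_{k'} \to G_{k'} \to \pi_0(G_{k'}) \to 0
\]
in $\Pro(\cC_{k'})$. By Lemma \ref{lem:unique}, this agrees with the universal exact sequence for $G_{k'}$ as soon as $\tG_{k'}$ lies in the essential image of $C^{\cC_{k'},\cB_{k'}}$, equivalently (by (\ref{eqn:homext}) and Lemma \ref{lem:sc}) that $\Hom(\tG_{k'}, Y) = \Ext^1(\tG_{k'}, Y) = 0$ for every $Y \in \cB_{k'}$. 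Any such $Y$ is defined over some finite separable subextension $K/k$ contained in $k'$, and the Weil-restriction adjunction $\otimes_k K \dashv \R_{K/k}$, together with preservation of $\cL$ and $\cF$ by $\R_{K/k}$, transport these vanishings from the corresponding (definitional) vanishings for $\tG$ over $k$, after a passage to a filtered colimit over the finite separable subextensions of $k'/k$.

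The main technical hurdle is step (a): the anti-affine characterization of $G^{\cL}$ relies on Chevalley's structure theorem, while the identity-component statement genuinely fails over imperfect fields for non-smooth $G$ — this is precisely the reason the hypothesis of separability is essential and cannot be relaxed in full generality.
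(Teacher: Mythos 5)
Your overall architecture --- establish the $i=0$ case on $\cC$, pass to $\Pro(\cC)$ via commutation with filtered inverse limits, deduce the higher $\pi_i$ from Lemma \ref{lem:change} by the projective-resolution argument, and identify the base-changed four-term sequence with the universal one via Lemma \ref{lem:unique} --- is essentially the paper's, and your treatment of the pair $(\cC,\cL)$ (the anti-affine subgroup $G^{\cL}$ and flat base change of $\cO(G)$) is correct; it amounts to the paper's appeal to the affinization theorem.

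The gap is in step (a) for the pair $(\cC,\cF)$: the identification $G^{\cF} = G^0$ is false in positive characteristic. One has $G^{\cF} \subseteq G^0$ because $G/G^0$ is finite, but equality requires $G^0$ to have no nonzero finite quotient, and this fails for non-smooth groups: for $G = \mu_p$ or $\alpha_p$ one has $G^0 = G$ while $G^{\cF} = 0$. More tellingly, Examples \ref{ex:inseparable}~(i) of the paper exhibits a connected reduced $G$ with $\varpi_0(G) = 0$ but $\varpi_0(G_{k'}) \cong \alpha_{p,k'}$ for a purely inseparable extension $k'/k$ --- which would be impossible if $\varpi_0$ were $G \mapsto G/G^0$, since formation of $G^0$ commutes with arbitrary field extensions. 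What you describe is $\pi_0^{\cC,\cE}$, the largest pro-\'etale quotient (cf.\ Remark \ref{rem:profinite}); in characteristic $0$ every $G \in \cC$ is smooth and the two functors agree, so your argument is complete there, but the proposition is asserted over an arbitrary field. The paper's proof instead shows directly that $X \in {^{\perp}\Pro(\cF_k)}$ implies $X_{k'} \in {^{\perp}\Pro(\cF_{k'})}$: a morphism $X_{k'} \to Y$ with $Y \in \cF_{k'}$ factors through some algebraic quotient $(X_i)_{k'}$, descends to a finite subextension $K/k$, and corresponds by adjunction to a morphism $X_i \to \R_{K/k}(Y_K)$ with $\R_{K/k}(Y_K) \in \cF_k$, which vanishes since $X_i \in {^{\perp}\Pro(\cF_k)}$. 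This is precisely the Weil-restriction device you already deploy for the universal covers; applying it to $\varpi_0$ closes the gap, and the remainder of your argument goes through.
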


\begin{proof}
The restriction of $\pi_0^{\cC,\cL}$ to $\cC$ is the affinization 
functor $\cC \to \cL$, which commutes with base change under
arbitrary field extensions (see e.g. \cite[III.3.8.1]{DG}).
Thus, so does $\pi_0^{\cC,\cL}$, since it commutes with filtered 
inverse limits. By Lemma \ref{lem:change}, it follows that 
$\pi_i^{\cC,\cL}$ commutes with base change under separable 
algebraic field extensions for any $i \geq 1$. In view of Lemma
\ref{lem:unique}, the same holds for the universal affine cover.

We now show that $\varpi_0$ (the largest
profinite quotient) commutes with $\otimes_k k'$, 
where $k'/k$ is any separable algebraic field extension; 
this will imply the assertions on the profinite homotopy
groups and profinite universal cover by arguing as above. 
For any $X \in {^{\perp}\Pro(\cF_k)}$, we have to check 
that $X_{k'} \in {^{\perp}\Pro(\cF_{k'})}$, i.e., 
$\Hom_{\cC_{k'}}(X_{k'},Y) = 0$ for any 
$Y \in \cF_{k'}$. But this follows by a Weil restriction
argument as in the proof of Lemma \ref{lem:change}.

More specifically, let $X = \lim_{\leftarrow} X_i$
(filtered inverse limit), where $X_i \in \cC_k$
and the natural map $X \to X_i$ is an epimorphism
for all $i$. Then $X_{k'} = \lim_{\leftarrow} (X_i)_{k'}$
(filtered inverse limit), where $(X_i)_{k'} \in \cC_{k'}$
and the natural map $X_{k'} \to (X_i)_{k'}$ is 
an epimorphism for all $i$ as well. Thus, for any
morphism $f : X_{k'} \to  Y$, where $Y \in \cF_{k'}$,
there exists $i$ such that $f$ is the composition
$X_{k'} \to (X_i)_{k'} \to Y$ for some morphism
$f_i : (X_i)_{k'} \to Y$. In turn, there exist a finite
subextension $K/k$ and a morphism 
$(f_i)_K : (X_i)_K \to Y_K$ in $\cC_K$, 
such that $f_i = (f_i)_K \otimes_K \, k'$. 
We now have
\[ (f_i)_K \in \Hom_{\cC_K}((X_i)_K, Y_K)
= \Hom_{\cC_k}(X_i, \R_{K/k}(Y_K)). \]
Moreover, $\R_{K/k}(Y_K) \in \cF_k$, since
$Y$ is a finite $k'$-group scheme and hence 
$Y_K$ is a finite $K$-group scheme.
It follows that
$\Hom_{\cC_k}(X_i, \R_{K/k}(Y_K)) = 0$,
as $X \in {^{\perp}\Pro(\cF_k)}$. Thus, $(f_i)_K = 0$, 
so that $f_i = 0$ and $f = 0$.
\end{proof}

\begin{remark}\label{rem:separable}
One may check similarly that the functors 
$\pi_i^{\cF,\cI}$ and the universal pro-infinitesimal
cover (considered in Remark \ref{rem:indproj}) also
commute with base change under separable
algebraic field extensions. Indeed, being infinitesimal
is preserved under Weil restriction associated with
finite separable field extensions.

Likewise, the functors $\pi_i^{\cI,\cI_m}$ and the 
universal multiplicative cover commute with such 
base change, since being multiplicative is preserved 
under Weil restriction as above.
\end{remark}

By Proposition \ref{prop:separable}, the profinite 
fundamental group $\varpi_1$ commutes with base 
change under algebraic field extensions in 
characteristic $0$. Yet this does not extend to 
an imperfect ground field, see Example 
\ref{ex:inseparable} (iii) below. To remedy this, 
we now recall the definition of the prime-to-$p$ part 
of $\varpi_1$, and show that it satisfies the assertions 
of the main theorem.

Every finite group scheme $G$ decomposes into a
product $G_p \times G_{p'}$, where $G_p$ is a
$p$-group, and $G_{p'}$ has order prime to $p$; 
moreover, $G_{p'}$ is \'etale. This decomposition
is clearly functorial, and yields an equivalence of categories
$\cF \cong \cF_p \times \cF_{p'}$ with an obvious
notation. In turn, we obtain an equivalence of
categories 
\[ \Pro(\cF) \cong \Pro(\cF_p) \times \Pro(\cF_{p'}), \]
where every object of $\Pro(\cF_{p'})$ is pro-\'etale.
Composing the resulting exact functor 
$\Pro(\cF) \to \Pro(\cF_{p'})$ (the prime-to-$p$ part) with 
the profinite homotopy functors $\varpi_i$, we obtain functors
\[ \varpi_i^{(p')} : \Pro(\cC) \longrightarrow \Pro(\cF_{p'}). \]

\begin{proposition}\label{prop:p'}
With the above notation and assumptions, the functor 
$\varpi_1^{(p')}$ is left exact and commutes with base change 
under algebraic field extensions.
If $k$ is algebraically closed and $G$ is a smooth connected
algebraic group, then $\varpi_1^{(p')}(G)$ is the prime-to-$p$ 
part of the \'etale fundamental group of the scheme $G$.
\end{proposition}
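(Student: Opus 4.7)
The plan has three parts, addressing left exactness, base change compatibility under algebraic extensions, and the identification with the classical \'etale fundamental group.

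For left exactness, since the $p'$-part functor on $\Pro(\cF) \cong \Pro(\cF_p) \times \Pro(\cF_{p'})$ is exact, applying it to the homotopy long exact sequence for $\varpi_*$ yields an analogous long exact sequence for $\varpi_*^{(p')}$. Hence $\varpi_i^{(p')}$ commutes with filtered inverse limits, and left exactness of $\varpi_1^{(p')}$ reduces to the vanishing $\varpi_i^{(p')} = 0$ for $i \geq 2$. In characteristic $0$ this is Theorem \ref{thm:perfect}, so assume $p > 0$. One reduces to $G \in \cC$, then via Chevalley's theorem (\ref{eqn:LGA}) to either an abelian variety $A$ (divisible with finite $n$-torsion, so Lemma \ref{lem:divisible} applies) or a linear group $L$. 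For $L$, the extension (\ref{eqn:MLU}) $0 \to M \to L \to U \to 0$ reduces further: $U$ is unipotent hence $p$-power torsion, so $\varpi_i^{(p')}(U) = 0$ for all $i$; and $M$ sits in an extension $0 \to T \to M \to \mu \to 0$ of a torus by a finite group of multiplicative type, to which Lemmas \ref{lem:divisible} and \ref{lem:pro} respectively apply.

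For base change under algebraic field extensions, the separable case follows from Proposition \ref{prop:separable} combined with the naturality of the $p'$-part functor. The main obstacle is the purely inseparable case, where $\varpi_1$ itself fails to commute with base change (Examples \ref{ex:inseparable}). The key observation is that any purely inseparable extension $k'/k$ is a universal homeomorphism on spectra, so base change induces an equivalence of \'etale sites, and in particular an equivalence between the categories of finite \'etale $k$-group schemes of order prime to $p$ and finite \'etale $k'$-group schemes of order prime to $p$. The Weil-restriction-style argument in the proof of Proposition \ref{prop:separable} then adapts: if $X \in {}^{\perp}\Pro(\cF_{p'})$ over $k$, any morphism $f : X_{k'} \to Y$ with $Y$ finite \'etale of order prime to $p$ over $k'$ is represented, for some level in the pro-system $X = \lim_{\leftarrow} X_i$, by a morphism $X_{i,k'} \to Y$, which under the \'etale-site equivalence corresponds to a morphism $X_i \to Y_0$ in $\cC_k$ (with $Y_0$ the unique descent of $Y$); this morphism vanishes by hypothesis, so $f = 0$, giving $X_{k'} \in {}^{\perp}\Pro(\cF_{p'})$ over $k'$. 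The remainder of the base change argument then follows the pattern of Proposition \ref{prop:separable} via the universal characterization in Lemma \ref{lem:unique}.

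For the identification with the \'etale fundamental group, let $G$ be smooth connected commutative algebraic over $k = \bar k$. Since $G$ is geometrically connected and reduced, the scheme-theoretic image of any homomorphism to a finite group scheme is a reduced finite connected $\bar k$-scheme, hence trivial; thus $\varpi_0(G) = 0$ and the exact sequence (\ref{eqn:long}) reduces to $0 \to \varpi_1(G) \to \tG \to G \to 0$. Writing the Chevalley decomposition with linear part $L = T \times U$ over $\bar k$ and quotienting by $U$, the homotopy long exact sequence together with $\varpi_i^{(p')}(U) = 0$ yields $\varpi_1^{(p')}(G) \cong \varpi_1^{(p')}(G/U)$. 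Since $G/U$ is divisible with finite $n$-torsion, Lemma \ref{lem:divisible} (iv) identifies $\varpi_1^{(p')}(G/U)$ with the inverse limit of $(G/U)[n]$ over integers $n$ prime to $p$; and $G[n] \cong (G/U)[n]$ for such $n$ (as $U$ is $p$-power torsion). The resulting prime-to-$p$ Tate module of $G$ coincides with the prime-to-$p$ \'etale fundamental group by the classical Serre-Lang theorem that any connected finite \'etale cover of prime-to-$p$ degree of a commutative algebraic group is an isogeny with \'etale kernel.
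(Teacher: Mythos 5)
Your first and third parts are essentially correct, and both take routes genuinely different from the paper's. The direct d\'evissage proving $\varpi_i^{(p')}=0$ for $i\geq 2$ (abelian varieties and tori via Lemma \ref{lem:divisible}, unipotent groups killed by a power of $p$, finite groups via Lemma \ref{lem:pro}) is a valid and rather more self-contained proof of left exactness than the paper's, which instead deduces it from Theorem \ref{thm:perfect} over the perfect closure once inseparable base change is known. Likewise, identifying $\varpi_1^{(p')}(G)$ with the prime-to-$p$ Tate module of $G/U$ and then invoking Serre--Lang is a correct unwinding of the reference to \cite[Prop.~1.1]{BS} that the paper uses for the last assertion.

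The gap is in the purely inseparable base change, which is the crux of the proposition. Your \'etale-site argument establishes only the $\Hom$-level statement, namely that $X\in{}^{\perp}\Pro(\cF_{p'})$ implies $X_{k'}\in{}^{\perp}\Pro(\cF_{p'})$ over $k'$, i.e.\ that $\varpi_0^{(p')}$ commutes with purely inseparable extensions. (Even there you should first factor $X_{i,k'}\to Y$ through the largest prime-to-$p$ \'etale quotient of $X_{i,k'}$, which commutes with arbitrary base change; the equivalence of categories of finite \'etale group schemes says nothing by itself about maps out of the non-\'etale $X_i$.) But passing from $\varpi_0^{(p')}$ to $\varpi_1^{(p')}$ is exactly where the difficulty sits, and neither mechanism you invoke applies: the ``pattern of Proposition \ref{prop:separable}'' rests on Lemma \ref{lem:change}, i.e.\ on base change preserving projectives, which is proved only for separable algebraic extensions (via Weil restriction) and is not available inseparably; and to apply Lemma \ref{lem:unique} over $k'$ you would need $(\tX^{(p')})_{k'}$ to lie in the essential image of the cosection, which requires the vanishing of $\Ext^1_{\Pro(\cC_{k'})}((\tX^{(p')})_{k'},Y)$ for all $Y\in\Pro(\cF_{p'})$ over $k'$ --- an $\Ext^1$-descent statement you never address. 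That this step cannot be routine is shown by Examples \ref{ex:inseparable}(iii): your $\Hom$-level argument uses only \'etaleness, not the prime-to-$p$ hypothesis, and would apply verbatim with $\cF_{p'}$ replaced by $\cE$; if the ``remainder'' then went through, it would prove that the full pro-\'etale fundamental group commutes with purely inseparable extensions, which is false since $\Ext^1_{\cC}(\bG_a,\nu_p)$ does not. The paper circumvents all of this by identifying $\varpi_i^{(p')}(G)$ with the image of $\varpi_i(G)$ in $\Pro(\cC/\cF_p)$ via Lemma \ref{lem:compatible} and then invoking the invariance of the quotient category $\cC/\cF_p$ under purely inseparable extensions (\cite[Thm.~3.17]{Br17b}); some input of this strength is needed and is missing from your argument.
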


\begin{proof}
To show the first assertion, it suffices to check that 
$\varpi_1^{(p')}$ commutes with purely inseparable
field extensions, in view of Theorem \ref{thm:perfect} 
and Proposition \ref{prop:separable}. We may identify
the prime-to-$p$ functor $\Pro(\cF) \to \Pro(\cF_{p'})$ 
with the quotient functor 
$Q^{\cF,\cF_p}: \Pro(\cF) \to \Pro(\cF)/\Pro(\cF_p)$; 
moreover, the pair $(\cC,\cF_p)$ satisfies the lifting 
property (see \cite[Lem.~3.1]{Br17b}). Thus, $\varpi_i^{(p')}(G)$ 
is identified with the image of $\varpi_i(G)$ in $\cC/\cF_p$
for any $G \in \Pro(\cC)$ (Lemma \ref{lem:compatible}). 
Moreover, the category $\cC/\cF_p$ is invariant
under base change by purely inseparable extensions,
in view of \cite[Thm.~3.17]{Br17b}; thus so is its
torsion subcategory, $\cF/\cF_p$. This implies the
desired statement.

The second assertion follows from the fact that every 
\'etale Galois cover of the scheme $G$ has the structure 
of a smooth commutative algebraic group, unique up to 
the choice of the neutral element (see e.g. 
\cite[Prop.~1.1]{BS}). 
\end{proof}

\begin{remark}\label{rem:proetale}
To obtain a version of the profinite fundamental group which 
commutes with all algebraic field extensions, one may
also consider the quotient category of $\cC$ by the Serre 
subcategory $\cI$ of infinitesimal algebraic groups. 
We may view $\cC/\cI$ as the category of algebraic
groups up to purely inseparable isogeny,
or alternatively as that of quasi-algebraic groups in
the sense of \cite{Se60} (see also \cite[V.3.4.5]{DG}).
The pair $(\cC,\cI)$ satisfies the lifting property (see e.g.
\cite[Lem.~2.2]{Br17a}); in view of Lemma \ref{lem:compatible},
it follows that $\pi_i^{\cC/\cI,\cF/\cI}(G)$ is the image
of $\varpi_i(G)$ in $\Pro(\cF/\cI)$, for any $G \in \Pro(\cC)$
and any $i \geq 0$. But $\Pro(\cF/\cI) \cong \Pro(\cE)$
via the largest pro-\'etale quotient functor $\pi_0^{\cC,\cE}$;
also, $\cC/\cI$ and its subcategory $\cF/\cI$ are invariant 
under base change by purely inseparable field extensions 
(see \cite[Thm.~3.17]{Br17b}). As a consequence, we obtain
functors 
\[ \pi_i^{\cC/\cI,\cE} : \Pro(\cC/\cI) \longrightarrow \Pro(\cE) \]
which commute indeed with algebraic field extensions.

If $k$ is perfect, then $\cF$ is naturally equivalent
to $\cI \times \cE$; as a consequence, 
$\Pro(\cF) \cong \Pro(\cI) \times \Pro(\cE)$ 
and this identifies the quotient functor 
\[ Q^{\cF,\cI} : \Pro(\cF) \longrightarrow \Pro(\cF/\cI) \] 
with the corresponding projection $\Pro(\cF) \to \Pro(\cE)$. 
It follows that the composite functor
$\pi_i^{\cC/\cI,\cE} \circ Q^{\cC,\cI}$
is identified with the pro-\'etale homotopy functor
$\pi_i$ discussed in Remark \ref{rem:profinite}.
Thus, $\pi_1^{\cC/\cI,\cE}$ is left exact and its 
prime-to-$p$ part is $\varpi_1^{(p')}$.

The latter assertion extends to an imperfect
field $k$, since $\pi_1^{\cC/\cI,\cE}$ may be
identified with the pro-\'etale fundamental
group over its perfect closure.
\end{remark}

\begin{examples}\label{ex:inseparable}
(i) The functor $\varpi_0$ does not commute with 
base change under purely inseparable field extensions.
Consider indeed an imperfect field $k$, and choose 
$t \in k \setminus k^p$. Let $G$ denote the
kernel of the morphism
\[ \bG_a \times \bG_a \longrightarrow \bG_a, 
\quad (x,y) \longmapsto x^p - t y^p. \]
Then $G$ is connected and reduced; thus,
$\varpi_0(G)$ is connected and reduced as well,
hence zero. Let $k' := k(t^{1/p})$, then the map
$(x,y) \mapsto (x, x - t^{1/p} y)$
yields an isomorphism
$G_{k'} \cong \bG_{a,k'} \times \alpha_{p,k'}$, 
where $\alpha_{p,k'}$ denotes the kernel of the 
Frobenius endomorphism
\[ F : \bG_{a,k'} \longrightarrow \bG_{a,k'}, \quad
x \longmapsto x^p. \] 
Thus, $\varpi_0(G_{k'}) \cong \alpha_{p,k'}$.

\smallskip

\noindent
(ii) The functor $\varpi_1$ does not commute with 
base change under purely inseparable field extensions
either. Consider indeed a smooth connected algebraic 
group $G$ and a finite group scheme $H$. Then 
$\varpi_0(G) = 0$, hence we obtain canonical isomorphisms
\[ \Hom_{\Pro(\cF)}(\varpi_1(G),H)
\cong \Ext^1_{\Pro(\cC)}(G,H) 
\cong \Ext^1_{\cC}(G,H). \]
If $\varpi_1$ commutes with base change under 
an extension of fields $k'/k$, then the natural map
\[ \Ext^1_{\cC}(G,H) \longrightarrow 
\Ext^1_{\cC_{k'}}(G_{k'},H_{k'}) \]
is injective in view of the above isomorphisms
and the faithfulness of $\otimes_k \, k'$
(Lemma \ref{lem:change}).

Now assume that $k$ is separably closed,
but not algebraically closed; then there exist
nontrivial $k$-forms of $\bG_a$, and
$\Ext^1_{\cC}(G,\bG_m) \neq 0$ for any such form $G$ 
(see \cite[Lem.~9.4]{Totaro}). As $G$ is killed by $p$, so 
is $\Ext^1_{\cC}(G,\bG_m)$. It follows that the natural map
\[ \Ext^1_{\cC}(G,\mu_p) \longrightarrow 
\Ext^1_{\cC}(G,\bG_m) \]
is surjective, where  $\mu_p$ denotes the kernel
of the $p$th power map of $\bG_m$.
Thus, $\Ext^1_{\cC}(G,\mu_p) \neq 0$.
On the other hand,
$\Ext^1_{\cC_{\bar{k}}}(G_{\bar{k}}, \mu_{p,\bar{k}}) 
= \Ext^1_{\cC_{\bar{k}}}(\bG_{a,\bar{k}}, \mu_{p,\bar{k}})$
vanishes in view of the structure of commutative linear 
algebraic groups over algebraically closed fields (see e.g. 
\cite[IV.3.1.1]{DG}). So $\varpi_1$ does not commute 
with the (purely inseparable) extension $\bar{k}/k$.

\smallskip

\noindent
(iii) The above examples show that the 
``pro-infinitesimal part'' of $\varpi_i$ (the largest 
pro-infinitesimal subobject) does not commute with 
base change under purely inseparable field extensions 
for $i = 0, 1$. One may wonder whether the ``pro-\'etale
part'' (the largest pro-\'etale quotient of $\varpi_i$) 
is better behaved. The answer is affirmative for $\varpi_0$, 
which commutes with arbitrary field extensions 
(see \cite[II.5.1]{DG}). Also, the answer is affirmative 
for the prime-to-$p$ part of $\varpi_1$ by Proposition 
\ref{prop:p'}. But the answer is negative for its pro-\'etale
$p$-primary part, as we now show in the case of the
additive group $\bG_a$. 

Since $\bG_a$ is killed by $p$, so are $\varpi_1(\bG_a)$ 
and its largest pro-\'etale quotient $Q$. Denoting by $\nu_p$ 
the constant $k$-group scheme associated with $\bZ/p \bZ$,
it follows that the natural map 
$\Hom_{\Pro(\cC)}(Q,\nu_p) \to
\Hom_{\Pro(\cC)}(\varpi_1(\bG_a), \nu_p)$
is an isomorphism. So it suffices to show that the formation 
of $\Hom_{\Pro(\cC)}(\varpi_1(\bG_a), \nu_p)$ does not
commute with purely inseparable field extensions. 

As in (ii) above, we have an isomorphism
\[  \Hom_{\Pro(\cC)}(\varpi_1(\bG_a), \nu_p)
\cong \Ext^1_{\cC}(\bG_a,\nu_p) \]
of modules over $\End_{\cC}(\bG_a)$. Also, recall that
$\End_{\cC}(\bG_a)$ consists of the additive polynomials,
\[ x \longmapsto a_0 \, x + a_1 \, x^p + \cdots + a_n \, x^{p^n}, \]
where $a_0, \ldots, a_n \in k$ (see e.g. \cite[II.3.4.4]{DG}).
By the proof of \cite[Prop.~1.20]{Saito}, we have  an 
``Artin-Schreier'' exact sequence
\begin{equation}\label{eqn:AS}
\End_{\cC}(\bG_a) \stackrel{\cP}{\longrightarrow} 
\End_{\cC}(\bG_a)
\longrightarrow \Ext^1_{\cC}(\bG_a,\nu_p) 
\longrightarrow 0
\end{equation}
of $\End_{\cC}(\bG_a)$-modules, where 
$\End_{\cC}(\bG_a)$ acts on its two copies by right 
multiplication, and 
$\cP(f)(x) := f(x)^p - f(x)$ for any $f \in \End_{\cC}(\bG_a)$
and $x \in \bG_a$.
We claim that the exact sequence (\ref{eqn:AS}) can also be 
obtained as follows: consider a nontrivial extension 
\[ 0 \longrightarrow \nu_p \longrightarrow G
\stackrel{x}{\longrightarrow} \bG_a \longrightarrow 0. \]
Then $G$ is smooth and unipotent; also, the composition
$G^0 \to G \to \bG_a$ is an epimorphism, where $G^0$
denotes the neutral component. It follows that $G$ is
connected, and hence is a $k$-form of $\bG_a$. By
\cite[Lem.~1.3]{Russell}, there is an exact sequence
\[ 0 \longrightarrow I \longrightarrow G 
\stackrel{y}{\longrightarrow} \bG_a \longrightarrow 0, \]
where $I$ is infinitesimal; moreover, we have $y = F^n_G$ 
for $n \gg 0$. Then the morphism
$(x,y) : G \to \bG_a \times \bG_a$ has a trivial kernel;
its cokernel is a quotient of $\bG_a \times \{ 0 \}$
for dimension reasons, and hence is isomorphic to 
$\bG_a$ in view of \cite[IV.2.1.1]{DG}. This yields 
an exact sequence
\[ 0 \longrightarrow G 
\stackrel{(x,y)}{\longrightarrow} \bG_a \times \bG_a
\stackrel{f + g}{\longrightarrow} \bG_a 
\longrightarrow 0, \]
where $f, g \in \End_{\cC}(\bG_a)$. So we may view $G$
as the zero scheme $\cV(f(x) + g(y))$ in 
$\bG_a \times \bG_a$; this identifies 
$\nu_p = \Ker(x: G \to \bG_a)$ with $\Ker(g)$. 
We may thus assume that
$g(y) = y^p - y$, so that $G = \cV(y^p - y + f(x))$.
This defines a map
\[ u : \End_{\cC}(\bG_a) \longrightarrow 
\Ext^1_{\cC}(\bG_a, \nu_p), \quad 
f \longmapsto \cV(y^p - y + f(x)), \]
which is surjective as $f = 0$ gives the trivial
extension. One may readily check that $u$ is
a morphism of $\End_{\cC}(\bG_a)$-modules;
also, $u(f) = 0$ if and only if $f(x) = h(x)^p - h(x)$
for some $h \in \End_{\cC}(\bG_a)$, that is,
$f =\cP(h)$. This completes the proof of the claim.

Clearly, we have 
$\Ker(\cP) = \Hom_{\cC}(\bG_a,\nu_p) = 0$. 
To describe $\Coker(\cP)$, we first consider the case
where $k$ is perfect. Then
\[ a \, x^{p^n} = \cP(a^{1/p} \, x^{p^{n-1}}) 
+ a^{1/p} \, x^{p^{n-1}} \]
for all $a \in k$ and all integers $n \geq 1$. It follows that
$\Coker(\cP) \cong k$ via the map
$k \to \End_{\cC}(\bG_a)$ given by scalar
multiplication. For an arbitrary field $k$, we obtain
by using a $p$-basis
\[ \Coker(\cP) \cong k \oplus 
\bigoplus_{n = 1}^{\infty} k \, x^{p^n}/k^p \, x^{p^n}. \]
In particular, the natural map $k \to \Coker(\cP)$
is not surjective if $k$ is imperfect. This shows that
$\Ext^1_{\cC}(\bG_a,\nu_p)$ does not commute
with purely inseparable field extensions.

The above construction may be interpreted in terms of
the exact sequence
\[ 0 \longrightarrow \nu_p 
\stackrel{\iota}{\longrightarrow} \bG_a
\stackrel{F - \id}{\longrightarrow} \bG_a 
\longrightarrow 0, \]
which yields an exact sequence 
\[ 0 \longrightarrow \End_{\cC}(\bG_a)/(F - \id)
\longrightarrow \Ext^1_{\cC}(\bG_a,\nu_p)
\stackrel{\iota^*}{\longrightarrow} 
\Ext^1_{\cC}(\bG_a, \bG_a), \]
where the image of $\iota^*$ is the kernel of 
$F - \id$. Since $ \End_{\cC}(\bG_a)$ is the 
noncommutative polynomial ring $k[F]$, we have 
$\End_{\cC}(\bG_a)/(F - \id) \cong k$.

If $k$ is perfect, then $\Ext^1_{\cC}(\bG_a, \bG_a)$ 
is a free module over $\End_{\cC}(\bG_a)$ acting 
on the left (see \cite[V.1.5.2]{DG}). Thus, we obtain 
an isomorphism of $\End_{\cC}(\bG_a)$-modules 
\[ \Ext^1_{\cC}(\bG_a,\nu_p) 
\cong k[F]/(F - \id) \cong k. \]
This isomorphism does not extend to an imperfect 
field $k$, as the image of $\iota^*$ may be
identified with $\bigoplus_{n = 1}^{\infty} k/k^p$.
\end{examples}

\subsection{The Milne spectral sequence}
\label{subsec:milne}

We first record a variant of a result obtained by Demazure 
and Gabriel in the setting of affine group schemes 
(see \cite[V.3.2.3]{DG}):

\begin{lemma}\label{lem:finite}
Let $k'/k$ be a separable field extension. Then there are
canonical isomorphisms for any $G \in \Pro(\cC)$, 
$H \in \cC$ and $j \geq 0$:
\[ \Ext^j_{\Pro(\cC_{k'})}(G_{k'},H_{k'}) \cong 
\lim_{\to, K} \Ext^j_{\Pro(\cC_K)}(G_K,H_K), \]
where  $K/k$ runs over the filtered direct system of finite
subextensions of $k'/k$.
\end{lemma}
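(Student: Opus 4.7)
My plan is to reduce the higher $\Ext$ case to the Hom case via projective resolutions, and then to handle the Hom case using the standard limit formula for morphisms between base changes of schemes of finite type. The bulk of the work is concentrated in the Hom case.

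For $j = 0$, I would write $G = \lim_{\leftarrow, \alpha} G_\alpha$ with $G_\alpha \in \cC_k$ and the structure maps $G \to G_\alpha$ epimorphisms. By the description of morphisms in pro-categories,
\[ \Hom_{\Pro(\cC_{k'})}(G_{k'}, H_{k'}) \cong \lim_{\to, \alpha} \Hom_{\cC_{k'}}((G_\alpha)_{k'}, H_{k'}), \]
and analogously with $k'$ replaced by any finite subextension $K$ of $k'/k$. For each fixed $\alpha$, both source and target are group schemes of finite type over $k$, and since $k' = \lim_{\to, K} K$ is a filtered colimit of its finite subextensions, the standard scheme-theoretic limit formula (cf.~EGA IV.8) produces
\[ \Hom_{\cC_{k'}}((G_\alpha)_{k'}, H_{k'}) \cong \lim_{\to, K} \Hom_{\cC_K}((G_\alpha)_K, H_K). \]
Compatibility with the group scheme structure is automatic, as being a group homomorphism is the equality of two morphisms of finite-type schemes and such equalities likewise descend to some finite subextension. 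Interchanging the two filtered colimits in $\alpha$ and in $K$ then yields the isomorphism for $j = 0$.

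For $j \geq 1$, I would choose a projective resolution $P_\bullet \to G$ in $\Pro(\cC_k)$. By Lemma \ref{lem:change}, the base-change functor $\otimes_k K$ is exact and sends projectives to projectives for every finite separable subextension $K/k$, and similarly for $\otimes_k k'$; hence $(P_\bullet)_K \to G_K$ and $(P_\bullet)_{k'} \to G_{k'}$ are projective resolutions in the respective pro-categories. Computing $\Ext^j$ as the $j$-th cohomology of the Hom complex and applying the $j = 0$ case termwise produces an isomorphism of cochain complexes
\[ \lim_{\to, K} \Hom_{\Pro(\cC_K)}((P_\bullet)_K, H_K) \stackrel{\cong}{\longrightarrow} \Hom_{\Pro(\cC_{k'})}((P_\bullet)_{k'}, H_{k'}); \]
taking $j$-th cohomology and using that filtered colimits of abelian groups commute with cohomology then gives the statement for all $j$. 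The principal obstacle is the $j = 0$ step, which requires carefully combining the scheme-theoretic limit formula with compatibility of group structures and with an interchange of two filtered colimits; once this is in place, the passage to higher $\Ext$ is purely formal.
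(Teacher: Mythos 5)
Your proposal is correct and follows essentially the same route as the paper's proof: the $j=0$ case is handled by writing $G$ as a filtered inverse limit of algebraic quotients, applying the finite-type limit formula (the paper cites the ``principle of the finite extension'' from Demazure--Gabriel, which is the same standard fact you cite from EGA IV.8) to each term, and interchanging the two filtered colimits; the higher $\Ext$ case then follows by applying this termwise to a projective resolution, using Lemma \ref{lem:change} to see that base change preserves projective resolutions, and commuting cohomology with filtered colimits.
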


\begin{proof}
We follow the argument of \cite[V.3.2.3]{DG} closely.
If $G \in \cC$ then the natural map
\[  \lim_{\to,K} \Hom_{\cC_K}(G_K,H_K) 
\longrightarrow \Hom_{\cC_{k'}}(G_{k'},H_{k'})\]
is an isomorphism by the ``principle of the finite extension''
(see e.g. \cite[I.3.2.2]{DG}).

For an arbitrary $G \in \Pro(\cC)$, consider the family
$(G_i)$ of its algebraic group quotients. Then
$G_{k'} \cong \lim_{\leftarrow,i} (G_i)_{k'}$, hence
\[ \Hom_{\Pro(\cC_{k'})}(G_{k'},H_{k'}) 
\cong 
\lim_{\to,i} \Hom_{\cC_{k'}}((G_i)_{k'},H_{k'}) 
\cong 
\lim_{\to,i} \lim_{\to,K} \Hom_{\cC_K}((G_i)_K,H_K)\]
 \[ \cong 
\lim_{\to,K} \lim_{\to,i} \Hom_{\cC_K}((G_i)_K,H_K) 
\cong
\lim_{\to,K} \Hom_{\Pro(\cC_K)}(G_K,H_K). \]
This yields the assertion for $j = 0$. Next, choose a 
projective resolution $P_{\bullet}$ of $G$ in $\Pro(\cC)$;
then $(P_{k'})_{\bullet}$ is a projective resolution of
$G_{k'}$ by Lemma \ref{lem:change}. Since we have 
\[ \Hom_{\Pro(\cC_{k'})}((P_{k'})_{\bullet}, H_{k'}) \cong 
\lim_{\to,K} \Hom_{\Pro(\cC_K)}((P_K)_{\bullet}, H_K), \]
this yields the statement by taking cohomology.
\end{proof}

Next, consider a Galois field extension $k'/k$. Then 
the profinite group 
\[ \Gamma := \Gal(k'/k) \]
acts on the group
$\Ext^j_{\Pro(\cC_{k'})}(G_{k'}, H_{k'})$ for any 
$G,H \in \Pro(\cC)$, and $j \geq 0$. If $H \in \cC$, then
this $\Gamma$-module is discrete as a consequence of Lemma 
\ref{lem:finite}. We may now state the following result, 
due to Milne when $k$ is perfect with algebraic closure  
$k'$ (see \cite[Prop., p.~437]{Milne}):

\begin{theorem}\label{thm:milne}
There is a spectral sequence
\[ H^i(\Gamma, \Ext^j_{\Pro(\cC_{k'})}(G_{k'}, H_{k'}))
\Rightarrow \Ext^{i + j}_{\Pro(\cC)}(G, H) \]
for any $G \in \Pro(\cC)$ and $H \in \cC$.
\end{theorem}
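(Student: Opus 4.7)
My plan is to realize the spectral sequence as the spectral sequence of a suitable first-quadrant double complex, one direction computing $\Ext$ over $k'$ and the other computing continuous cohomology of $\Gamma$.

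Fix a projective resolution $P_\bullet \to G$ in $\Pro(\cC)$; by Lemma \ref{lem:change}, $(P_\bullet)_{k'} \to G_{k'}$ is a projective resolution in $\Pro(\cC_{k'})$. Set
\[ M^j := \Hom_{\Pro(\cC_{k'})}((P_j)_{k'}, H_{k'}), \]
which is a discrete $\Gamma$-module by Lemma \ref{lem:finite}, and satisfies $H^j(M^\bullet) \cong \Ext^j_{\Pro(\cC_{k'})}(G_{k'}, H_{k'})$. The first substantive step is Galois descent for $\Hom$: for any $P \in \Pro(\cC)$ and any $H \in \cC$,
\[
\Hom_{\Pro(\cC)}(P, H) \;\stackrel{\cong}{\longrightarrow}\; \Hom_{\Pro(\cC_{k'})}(P_{k'}, H_{k'})^{\Gamma}.
\]
For a finite Galois subextension this is classical fpqc descent of morphisms of group schemes; the general case follows by passing to the filtered colimit via Lemma \ref{lem:finite}, using that continuous $\Gamma$-invariants commute with filtered colimits of discrete modules. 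Term by term this identifies $(M^\bullet)^\Gamma$ with $\Hom_{\Pro(\cC)}(P_\bullet, H)$, so $H^n((M^\bullet)^\Gamma) \cong \Ext^n_{\Pro(\cC)}(G, H)$.

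Next I would assemble the double complex $D^{i,j} := C^i(\Gamma, M^j)$, where $C^\bullet(\Gamma, -)$ denotes the standard continuous cochain complex of a discrete $\Gamma$-module. Its column filtration yields a spectral sequence with $E_2^{i,j} = H^i(\Gamma, \Ext^j_{\Pro(\cC_{k'})}(G_{k'}, H_{k'}))$, the desired $E_2$-page. To identify the abutment with $\Ext^{i+j}_{\Pro(\cC)}(G, H)$, it suffices to verify that the row-filtration spectral sequence degenerates on $E_1$, i.e.\ that
\[
H^i\bigl(\Gamma,\;\Hom_{\Pro(\cC_{k'})}(P_{k'}, H_{k'})\bigr) \;=\; 0 \qquad \text{for all } i \geq 1,
\]
whenever $P$ is projective in $\Pro(\cC)$; combined with descent this will yield $H^n(\text{Tot}(D)) \cong H^n((M^\bullet)^\Gamma) \cong \Ext^n_{\Pro(\cC)}(G, H)$.

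This acyclicity is the main obstacle. By Lemma \ref{lem:finite} and continuity of $\Gamma$-cohomology under filtered colimits of discrete modules, it reduces to the finite-Galois statement $H^i(\Gal(K/k),\Hom_{\Pro(\cC_K)}(P_K, H_K)) = 0$ for $i \geq 1$ and every finite Galois subextension $K/k$. I would establish this via the Weil-restriction adjunction $\Hom_{\Pro(\cC_K)}(P_K, H_K) \cong \Hom_{\Pro(\cC)}(P, \R_{K/k}(H_K))$ together with a Shapiro-type observation: $\R_{K/k}(H_K)$ carries a natural $\Gal(K/k)$-action whose fixed subscheme is $H$ and whose base change to $K$ is a product of Galois conjugates, making it a coinduced $\Gal(K/k)$-object of $\cC$; hence $\Hom_{\Pro(\cC)}(P, \R_{K/k}(H_K))$ is a coinduced (and so acyclic) $\Gal(K/k)$-module. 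The affine case is classical (cf.~\cite[V.3.2.3]{DG}); for general $H \in \cC$ I would reduce through the Chevalley sequence (\ref{eqn:LGA}) to the separate cases $H$ linear and $H$ an abelian variety, using that Weil restriction along a finite separable extension preserves each class and that the coinduced structure on $\R_{K/k}(H_K)$ may be read off from the functor-of-points formula $\R_{K/k}(H_K)(R) = H(R \otimes_k K)$ combined with the normal basis theorem.
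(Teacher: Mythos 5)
Your skeleton is sound and is essentially the paper's: the double complex $C^i(\Gamma,M^j)$ is just the Cartan--Eilenberg construction of the composite-functor spectral sequence for $\Hom_{\Pro(\cC)}(G,H)=H^0(\Gamma,\Hom_{\Pro(\cC')}(G',H'))$, the descent identification $(M^\bullet)^\Gamma\cong\Hom_{\Pro(\cC)}(P_\bullet,H)$ is correct, and you correctly isolate the whole difficulty in the acyclicity of the $\Gamma$-module $\Hom_{\Pro(\cC_{k'})}(P_{k'},H_{k'})$ for $P$ projective (this is Proposition \ref{prop:acyclic}).

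The gap is in your proof of that acyclicity. The Weil-restriction adjunction $\Hom_{\Pro(\cC_K)}(P_K,H_K)\cong\Hom_{\Pro(\cC)}(P,\R_{K/k}(H_K))$ is fine, but $\R_{K/k}(H_K)$ is \emph{not} a coinduced object of $\cC_k$: its base change to $K$ is a product of $d:=[K{:}k]$ Galois conjugates of $H_K$, but over $k$ it is not isomorphic to $H^{d}$ (e.g.\ $\R_{K/k}(\bG_{m,K})$ is a nonsplit torus), so there is no reason for $\Hom_{\Pro(\cC)}(P,\R_{K/k}(H_K))$ to be of the form $\Hom(\bZ[\Gal(K/k)],N)$. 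The normal basis theorem only identifies $R\otimes_k K$ with $R\otimes_k k[\Gal(K/k)]$ as a module, which coinduces the points functor only for vector groups such as $\bG_a$. Concretely, take $P$ the universal profinite cover of $\bG_m$ and $H=\bG_m$, or $P=P(E)$ and $H=E$ for an elliptic curve with $\End(E_{k'})=\bZ$: then $\Hom_{\Pro(\cC')}(P',H')\cong\bQ$ with \emph{trivial} $\Gamma$-action, which is acyclic (being uniquely divisible) but is not coinduced for any $d>1$, since a coinduced module is a $d$-th power as an abelian group. So your mechanism fails even where the conclusion holds; this is essentially the flaw in Milne's sketch that the paper flags in Remark \ref{rem:change}. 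The paper instead reduces in $G$ rather than in $H$: it decomposes $P$ into indecomposable projectives via Proposition \ref{prop:projind} and proves acyclicity case by case --- $\bQ$-vector-space arguments for $P(A)$, for covers of tori and for $\bG_a$; a localization argument exhibiting $k'(F)$ as a filtered union of modules isomorphic to $k'[F]$ for $P=\tW$; and, only in the profinite/pro-\'etale and infinitesimal cases, genuine coinduction in Galois cohomology together with Cartier duality and the computation $\End(\alpha'_p)=k'$. To repair your proof you would need to carry out an analysis of this kind; the uniform Shapiro argument does not exist.
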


The proof will combine the approach sketched in \cite{Milne} 
with the inductive description of indecomposable projectives 
obtained in Subsection \ref{subsec:P(A)}. To simplify the notation,
we set 
\[ G_{k'} =: G', \quad H_{k'} =: H', \quad \cC_{k'} =: \cC', \quad \ldots \]

By Lemma \ref{lem:change}, the base change functor 
$\Pro(\cC) \to \Pro(\cC')$
is exact and sends projectives to projectives. Also, note that
\[ H^0(\Gamma, \Hom_{\Pro(\cC')}(G', H'))
= \Hom_{\Pro(\cC)}(G,H), \]
since this holds by Galois descent when $G \in \cC$, and 
taking $\Gamma$-invariants commutes with direct limits.
So Theorem \ref{thm:milne} will follow from the 
spectral sequence of composite functors (see 
\cite[Thm.~2.4.1]{Grothendieck}), once we show:

\begin{proposition}\label{prop:acyclic}
Let $G$ be a projective object of $\Pro(\cC)$,
and $H \in \cC$. Then the $\Gamma$-module
 $\Hom_{\Pro(\cC')}(G',H')$ is acyclic.
\end{proposition}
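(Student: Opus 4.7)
The plan is first to reduce to the case where $G$ is an indecomposable projective. By \cite[V.2.4]{DG}, every projective object of $\Pro(\cC)$ decomposes as a product $\prod_\alpha G_\alpha$ of indecomposable projectives. Since $H' \in \cC'$ is artinian and $\prod_\alpha G'_\alpha$ is the filtered inverse limit of its finite sub-products, one has a canonical identification $\Hom_{\Pro(\cC')}(G', H') \cong \bigoplus_\alpha \Hom_{\Pro(\cC')}(G'_\alpha, H')$ as $\Gamma$-modules. As continuous Galois cohomology of a profinite group on discrete modules commutes with filtered colimits, and hence with direct sums, it suffices to treat each indecomposable $G_\alpha$ separately.

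Next, I would invoke the classification in Proposition \ref{prop:projind}, which gives four types of indecomposable projectives. For three of these types --- $G = P(A)$ with $A$ a simple abelian variety, $G = \widetilde{T}$ with $T$ a simple torus, and $G = \bG_a$ in characteristic $0$ --- the projective $G$ is uniquely divisible, by Proposition \ref{prop:affine} (ii), by Lemma \ref{lem:divisible} applied to $T$ (which is divisible with finite $n$-torsion), and by direct verification respectively. Consequently $\Hom_{\Pro(\cC')}(G', H')$ is a divisible torsion-free abelian group, i.e.\ a $\bQ$-vector space. Continuous Galois cohomology of a profinite group on a discrete $\bQ$-vector space vanishes in positive degree, being a filtered colimit of cohomologies of finite quotients on $\bQ$-vector spaces, each killed by the order of the group.

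The remaining cases --- $G = \widetilde{W}$ in characteristic $p > 0$, and $G$ an indecomposable projective of $\Pro(\cF)$ --- are not uniquely divisible, and require a finer argument via Weil restriction. I would reduce to finite Galois subextensions via $H^i(\Gamma, \Hom_{\Pro(\cC')}(G', H')) = \lim_{\to, K} H^i(\Gal(K/k), \Hom_{\Pro(\cC_K)}(G_K, H_K))$, where $K/k$ runs over finite Galois subextensions of $k'/k$, combining Lemma \ref{lem:finite} with the standard fact that continuous $\Gamma$-cohomology on a discrete module is the filtered colimit of $\Gal(K/k)$-cohomology on the corresponding fixed submodules. For each such $K$, the adjunction $\Hom_{\Pro(\cC_K)}(G_K, H_K) \cong \Hom_{\Pro(\cC_k)}(G, \R_{K/k}(H_K))$ realizes this $\Gal(K/k)$-module as morphisms from $G$ into $\R_{K/k}(H_K)$, a $k$-group whose intrinsic $\Gal(K/k)$-action makes it a ``coinduced'' equivariant object of $\cC$ (base change to $K$ yields $\prod_\sigma H_K$ with translation action, and the fixed points recover $H$). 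Exactness of $\Hom_{\Pro(\cC_k)}(G, -)$ for projective $G$, together with a Shapiro-style argument exploiting this coinduced structure, then yields the desired vanishing.

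The main obstacle is precisely this last step: transferring the coinduced structure of $\R_{K/k}(H_K)$ --- naturally formulated via base change to $K$ --- into a coinduced structure on the $\Gal(K/k)$-module $\Hom_{\Pro(\cC_k)}(G, \R_{K/k}(H_K))$ itself, and handling pro-finite complications in the case $G \in \Pro(\cF)$. Making this rigorous requires carefully unwinding the adjunction and exploiting that $G$ is projective (hence lifts through surjections of $\Gal(K/k)$-equivariant objects), together with Proposition \ref{prop:separable} on compatibility of projective covers with separable base change and Lemma \ref{lem:finite} on the direct-limit formula for Ext in pro-categories.
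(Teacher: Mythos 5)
Your reduction to a finite extension, the decomposition into indecomposable projectives via the direct-sum formula for $\Hom$ into an artinian object, and the treatment of the three uniquely divisible types ($P(A)$ for $A$ a simple abelian variety, $\tT$ for $T$ a simple torus, and $\bG_a$ in characteristic $0$, where the $\Hom$-groups are discrete $\bQ$-vector spaces and hence acyclic) all agree with the paper's argument. But the two remaining cases are exactly where the substance of the proof lies, and you have not carried them out; you acknowledge as much, so this is a genuine gap rather than a wrong turn.

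Concretely, your proposed Shapiro-style argument does not go through as stated for $G = \tW$. One has
$\Hom_{\Pro(\cC')}(\tW',H') \cong \Hom_{\Pro(\cC'/\cF')}(W',Q(H'))$, which after reducing to $H = \bG_a$ (using that $W$ is unipotent and that objects of $\cC/\cF$ have finite length) becomes $\End_{\cC'/\cF'}(\bG'_a) \cong k'(F)$, the fraction skew field of $k'[F]$. This $\Gamma$-module is \emph{not} coinduced from $k(F)$ --- $k'(F)$ is strictly larger than $k'\otimes_k k(F)$ in general --- so no direct appeal to Weil restriction or Shapiro's lemma applies. The paper instead writes $k'(F)$ as the filtered union of the submodules $g^{-1}k'[F]$ over nonzero $g \in k[F]$, each of which is isomorphic to the acyclic module $k'\otimes_k k[F]$; showing that these submodules exhaust $k'(F)$ requires a noncommutative integral-dependence argument using that $k[F]$ is left Noetherian. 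This step is absent from your sketch and is not a routine unwinding of the adjunction.

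The profinite case is also more involved than ``handling pro-finite complications.'' The paper first proves that any morphism $G' \to H'$ with $G$ profinite factors through $F'$ for a finite subgroup $F \subset H$ (Lemma \ref{lem:fin}), then splits the indecomposable projectives of $\Pro(\cF)$ into those coming from $\cE$, $\cI_m$ and $\cI_u$. The \'etale case is settled by passing to the anti-equivalent category of discrete $\Gamma_k$-modules and an explicit coinduction computation with an injective object; the multiplicative infinitesimal case reduces to the \'etale one by Cartier duality; and the unipotent infinitesimal case needs Lemma \ref{lem:cov} (the projective cover of $\alpha_p$ base-changes to the projective cover of $\alpha'_p$, proved by a dimension count through $\R_{k'/k}$) together with Lemma \ref{lem:top} to identify the $\Hom$-group with $k'$. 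None of these ingredients is supplied or replaced by your outline, so the proposal as written does not constitute a proof of the proposition.
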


We start the proof of the above proposition with some 
observations and reductions. Since being acyclic is 
preserved under taking direct limits, we may assume 
that $k'/k$ is \emph{finite} by combining Lemmas 
\ref{lem:change} and \ref{lem:finite}. Also, recall that
$G \cong \prod_{i \in I} P_i$, where the $P_i$ are
indecomposable and projective. Thus,
$G' \cong \prod_{i \in I} P'_i$ and
\[ \Hom_{\Pro(\cC')}(G', H') 
\cong \bigoplus_{i \in I} 
\Hom_{\Pro(\cC')}(P'_i, H'). \]
To show the acyclicity of this $\Gamma$-module, we may
therefore assume that $G$ is \emph{indecomposable}. 
Thus, $G$ is of one of the types listed in Proposition 
\ref{prop:projind}.

Assume first that $G = P(A)$, where $A$ is a simple 
abelian variety. Then $G' = P(A')$ (the universal 
affine cover of $A'$) in view of Proposition 
\ref{prop:separable}. So the adjunction isomorphism 
(\ref{eqn:adj}) yields an isomorphism of $\Gamma$-modules
\[ \Hom_{\Pro(\cC')}(G',H') \cong \Hom_{\ucA'}(A', Q(H')), \]
where $Q := Q^{\cC', \cL'}$.
The right-hand side is a $\bQ$-vector space, and hence 
an acyclic $\Gamma$-module in view of \cite[V.3.5.1]{DG}.

Next, assume that $G$ is the universal profinite
cover of a simple torus $T$. Then $G'$ is the
universal profinite cover of $T'$ in view of
Proposition \ref{prop:separable} again. By adjunction, 
it follows that
\[ \Hom_{\Pro(\cC')}(G',H') \cong 
\Hom_{\cC'/\cF'}(T', Q(H')), \]
where $Q := Q^{\cC', \cF'}$.
This is a $\bQ$-vector space, since $T'$ is 
divisible; so we conclude as above.

The case where $G = \bG_a$ in characteristic $0$
is handled similarly: then  
\[ \Hom_{\Pro(\cC')}(G',H') \cong 
\Hom_{\cC'}(\bG'_a,H') \]
is again a $\bQ$-vector space, hence $\Gamma$-acyclic.

Next, let $G = \tW$ in characteristic $p > 0$. We obtain
as above
 \[ \Hom_{\Pro(\cC')}(G',H') \cong 
\Hom_{\Pro(\cC'/\cF')}(W',Q (H')), \]
where $Q := Q^{\cC',\cF'}$;
moreover, $W'$ is the projective cover of $\bG'_a$
in $\Pro(\cC'/\cF')$ in view of \cite[V.3.7.5]{DG}. 
To show that the above 
$\Gamma$-module is acyclic, we may assume that $H$ 
is simple in $\cC/\cF$ (since every object 
in that category has finite length, and the functor
$\Hom_{\Pro(\cC'/\cF')}(W', -)$ 
is exact). So $H$ is either a simple abelian variety, 
or a simple torus, or $\bG_a$ (see \cite[Prop.~3.2]{Br17a}). 
As $W$ is unipotent, we may further assume that 
$H = \bG_a$. We now need the following observation:

\begin{lemma}\label{lem:top}
Let $\cA$ be an abelian category, and $f : X \to Y$
an essential epimorphism in $\cA$, where $Y$ is
simple. For any simple object $Z$ of $\cA$, we have
$\Hom_{\cA}(X,Z) = 0$ unless $Z \cong Y$, and
$\Hom_{\cA}(X,Y) \cong \End_{\cA}(Y)$ via composition 
with $f$. 
\end{lemma}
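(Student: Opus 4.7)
The plan is to analyze an arbitrary nonzero morphism $\varphi : X \to Z$ with $Z$ simple by comparing the kernels $K_1 := \Ker(f)$ and $K_2 := \Ker(\varphi)$. Once the containment $K_1 \subseteq K_2$ is established, $\varphi$ factors through $X/K_1$; since $f$ is an epimorphism it identifies $X/K_1$ with $Y$, yielding $\varphi = \alpha \circ f$ for a unique $\alpha : Y \to Z$. As $\varphi \neq 0$, the morphism $\alpha$ is nonzero; since any nonzero morphism between simple objects of an abelian category is an isomorphism (Schur), we deduce $Z \cong Y$, which proves the first assertion. In the case $Z = Y$, this factorization shows that the map $\End_{\cA}(Y) \to \Hom_{\cA}(X,Y)$, $\alpha \mapsto \alpha \circ f$, is surjective; it is injective because $f$ is epi.

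The crux is to establish $K_1 \subseteq K_2$, which is the only place the essentiality of $f$ intervenes. I would argue by contradiction. First note that $\varphi$ is epi, since $Z$ is simple and $\Im(\varphi)$ is a nonzero subobject of $Z$; thus the induced map $X/K_2 \to Z$ is an isomorphism. If $K_1 \not\subseteq K_2$, then the composition $K_1 \hookrightarrow X \to Z$ is a nonzero morphism to the simple object $Z$, hence an epimorphism. Under the isomorphism $X/K_2 \cong Z$, this says that the image of $K_1$ in $X/K_2$ is all of $X/K_2$, i.e., $K_1 + K_2 = X$. But the essentiality of the epimorphism $f$ means precisely that $\Ker(f)$ is a superfluous subobject of $X$: any $X' \subseteq X$ with $X' + K_1 = X$ must already equal $X$. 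Taking $X' = K_2$ forces $K_2 = X$, i.e., $\varphi = 0$, contradicting our standing assumption.

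The argument is entirely formal and I do not foresee a genuine obstacle. The only delicate point is the reformulation of essentiality of $f$ in terms of superfluousness of $\Ker(f)$, but this is a standard equivalence: if $X' + \Ker(f) = X$ then $f|_{X'} : X' \to Y$ is epi (because the quotient $X/X' \to X/(X' + \Ker(f)) = 0$ is epi and factors through $Y/f(X')$), so essentiality of $f$ directly gives $X' = X$.
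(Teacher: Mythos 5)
Your proof is correct and follows essentially the same route as the paper: both arguments combine the essentiality of $f$ with the simplicity of $Y$ and $Z$ to force $\varphi$ to factor through $f$, and then conclude by Schur's lemma. The only difference is bookkeeping: the paper first shows $\Ker(\varphi)\subseteq\Ker(f)$ (a non-epi composition into the simple $Y$ is zero) and then upgrades to equality, whereas you establish $\Ker(f)\subseteq\Ker(\varphi)$ directly via the equivalent ``superfluous kernel'' formulation of essentiality --- both are sound and of the same length and level of generality.
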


\begin{proof}
Let $g \in \Hom_{\cA}(X,Z)$. If $g \neq 0$, then
the composition $\Ker(g) \to X \to Y$ is not an
epimorphism, since $f$ is essential. As $Y$ is simple,
this composition is zero, i.e., $\Ker(g) \subset \Ker(f)$.
This yields an exact sequence
\[ 0 \longrightarrow \Ker(f)/\Ker(g) \longrightarrow
X/\Ker(g) \longrightarrow Y \longrightarrow 0. \]
As $Z$ is simple, we have $X/\Ker(g) \cong Z$.
So $Z \cong Y$ and $\Ker(f) = \Ker(g)$, i.e.,
$g$ factors uniquely through $f$.
\end{proof}

Applying Lemma \ref{lem:top} to the abelian category 
$\Pro(\cC'/\cF')$ and to the essential epimorphism 
$W' \to \bG'_a$, we see that 
$\Hom_{\Pro(\cC'/\cF')}(W',H') = 0$
unless $H' \cong \bG'_a$, and
\[ \Hom_{\Pro(\cC'/\cF')}(W',\bG'_a) 
\cong \End_{\cC'/\cF'}(\bG'_a) \]
as $\Gamma$-modules. 

We now make a further reduction to the case where
$k$ \emph{is perfect}: indeed, the Galois group
$\Gamma$ is invariant under purely inseparable field
extensions of $k$, and the same holds for the isogeny 
category $\cC/\cF$ by \cite[Thm.~3.11]{Br17a}.
Recall that $\End_{\cC'}(\bG'_a)$ is the noncommutative
polynomial ring $k'[F]$, and
$\End_{\cC'/\cF'}(\bG'_a)$ is its fraction
skewfield $k'(F)$, as follows e.g. from
\cite[V.3.6.7]{DG}. To show that $k'(F)$ is 
acyclic, it suffices to check that it is the
direct limit of its $\Gamma$-submodules 
$g^{-1} k'[F]$ over all nonzero $g \in k[F]$,
since every such submodule is isomorphic
to $k'[F] \cong k' \otimes_k k[F]$, hence is
acyclic. For this, we adapt a standard argument
of commutative algebra.

Let $g^{-1}f \in k'(F)$, where $f,g \in k'[F]$
and $g \neq 0$. Since the left $k[F]$-module
$k'[F]$ is finitely generated and the ring $k[F]$ 
is left Noetherian, the increasing sequence of
submodules 
$k[F] + k[F]\, g + \cdots + k[F] \, g^n$ stops.
So there exist an integer $n \geq 1$
and $a_1,\ldots, a_n \in k[F]$ such that 
$g^n + a_1 g^{n-1} + \cdots + a_n = 0$.
Since $k'[F]$ is a domain and $g \neq 0$,
we may further assume that $a_n \neq 0$.
Then $g' g = - a_n \in k[F] \setminus \{ 0 \}$,
where 
$g' := g^{n-1} + a_1 g^{n-2} + \cdots + a_{n-1}$.
Thus, $g^{-1} f = (g'g)^{-1} g'f$ is as desired.

This completes the proof of the proposition for $G = \tW$, 
and leaves us with the case where $G$ is \emph{profinite}
(and $k$ is arbitrary). We now prove:

\begin{lemma}\label{lem:fin}
Let $G \in \Pro(\cF)$, $H \in \cC$, and 
$f \in \Hom_{\Pro(\cC')}(G',H')$. 
Then there exists a finite subgroup $F \subset H$
such that $f$ factors through $F' \subset H'$.
\end{lemma}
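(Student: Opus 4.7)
My plan is to reduce $f$ to a morphism from a finite algebraic group and then Galois-saturate its image. First, write $G = \lim_{\leftarrow} G_i$ with $G_i \in \cF$ and surjective transition maps; base change commutes with inverse limits, so $G' = \lim_{\leftarrow} G'_i$. Since $H' \in \cC'$ is artinian in $\Pro(\cC')$, the canonical map
\[ \lim_{\to} \Hom_{\cC'}(G'_i, H') \stackrel{\cong}{\longrightarrow}
\Hom_{\Pro(\cC')}(G', H') \]
is an isomorphism (see \cite[V.2.3.3]{DG}), so $f$ factors as $G' \to G'_i \stackrel{f_i}{\to} H'$ for some $i$, and $K' := f_i(G'_i) \subset H'$ is a finite subgroup scheme.

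Next, I would descend $f_i$ to a finite Galois subextension. By the principle of the finite extension (the $j = 0$ case of Lemma \ref{lem:finite}), there exist a finite subextension $K_0/k$ of $k'/k$ and a morphism $g : (G_i)_{K_0} \to H_{K_0}$ with $f_i = g \otimes_{K_0} k'$; since $k'/k$ is Galois, I may enlarge $K_0$ so that $K_0/k$ is finite Galois. Let $J := g((G_i)_{K_0}) \subset H_{K_0}$, a finite subgroup scheme satisfying $J_{k'} = K'$.

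Finally, I would Galois-saturate. Set $\Delta := \Gal(K_0/k)$. For each $\gamma \in \Delta$, the twist $\gamma(J) \subset H_{K_0}$ is a finite subgroup scheme, since $\gamma$ acts on $H_{K_0}$ through the factor $\Spec K_0$ alone. The sum
\[ \tF := \sum_{\gamma \in \Delta} \gamma(J), \]
defined as the image of the multiplication $\prod_{\gamma \in \Delta} \gamma(J) \to H_{K_0}$, is a finite $\Delta$-stable subgroup scheme of $H_{K_0}$. By Galois descent for closed subgroup schemes of the $k$-scheme $H$, we have $\tF = F_{K_0}$ for a unique closed subgroup $F \subset H$, necessarily finite. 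Base changing to $k'$ yields $F' = F_{k'} \supset J_{k'} = K' \supset \Im(f)$, so $f$ factors through $F'$ as required.

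The main obstacle is the descent step: it requires $K_0/k$ to be \emph{finite and Galois} before forming the sum, which is where the Galois hypothesis on $k'/k$ enters (guaranteeing that the Galois closure of the initial $K_0$ is again contained in $k'$). The remaining ingredients — factoring through a finite quotient, the principle of the finite extension, and summing finitely many finite subgroup schemes — are all routine.
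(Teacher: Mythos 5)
Your proof is correct, but it takes a genuinely different route from the paper's. Both arguments begin the same way: writing $G = \lim_{\leftarrow} G_i$ with finite quotients and using $\Hom_{\Pro(\cC')}(G',H') = \lim_{\to} \Hom_{\cC'}(G'_i,H')$ to reduce to $G \in \cF$, so that $\Im(f)$ is a finite subgroup scheme of $H'$. From there the paper performs a d\'evissage on $\Im(f)$ rather than on the field extension: its largest infinitesimal subgroup lies in a Frobenius kernel $\Ker(F^n_{H'/k'}) = \Ker(F^n_{H/k})'$, which is already defined over $k$, so one may pass to $H/\Ker(F^n_{H/k})$ and assume $\Im(f)$ \'etale; an \'etale finite subgroup scheme is a finite torsion subgroup of $H(k_s)$ stable under $\Gal(k_s/k')$, and the subgroup of $H(k_s)$ generated by its finitely many $\Gal(k_s/k)$-conjugates is finite (being generated by finitely many torsion elements of a commutative group) and descends to the desired $F \subset H$. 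You instead descend the morphism itself to a finite Galois subextension $K_0/k$ via the principle of the finite extension, saturate its image under $\Gal(K_0/k)$, and apply Galois descent for closed subgroup schemes. Your route is more uniform --- no infinitesimal/\'etale case split, no Frobenius-kernel trick --- but it leans on the normality of $k'/k$ (to keep the Galois closure of $K_0$ inside $k'$) and on descent of closed subgroup schemes along a finite Galois extension, whereas the paper's argument works verbatim for an arbitrary separable algebraic extension $k' \subset k_s$ and never needs to descend anything other than sets of $k_s$-points. In the context of Theorem \ref{thm:milne} the extension is Galois, so both proofs apply.
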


\begin{proof}
Write $G$ as a filtered inverse limit of finite quotients
$G_i$; then $G'$ is the filtered inverse limit of its
finite quotients $G'_i$. Thus,
\[ \Hom_{\Pro(\cC')}(G',H') = 
\lim_{\to} \Hom_{\cC'}(G'_i,H'). \]
We may therefore assume that $G \in \cF$; then
$\Im(f)$ is a finite $k'$-subgroup of $H'$. Let
$I \subset \Im(f)$ denote the largest infinitesimal
subgroup, then $I$ is contained in some Frobenius
kernel $\Ker(F^n_{H'/k'})$. Hence 
$I \subset \Ker(F^n_{H/k})' =: J'$,
where $J \subset H$ is infinitesimal. Thus,
$I = J' \cap \Im(f)$, and $\Im(f)/I$
is a finite \'etale $k'$-subgroup of
$H'/J' = (H/J)'$. So we may assume 
that $\Im(f)$ is \'etale; then we may view $\Im(f)$
as a finite subgroup of $H(k_s)$, stable under 
$\Gal(k_s/k')$. In that case, the (finitely many) 
conjugates of $\Im(f)$ under $\Gal(k_s/k)$ generate 
the desired finite $k$-subgroup $F \subset H$.
\end{proof}

By Lemma \ref{lem:fin}, we have
\[ \Hom_{\Pro(\cC')}(G',H') = 
\lim_{\to} \Hom_{\Pro(\cF')}(G',F'), \]
where the limit runs over all the finite subgroups 
$F \subset H$. Since taking $\Gamma$-cohomology 
commutes with direct limits, it suffices to show that 
\emph{the $\Gamma$-module 
$\Hom_{\Pro(\cF')}(G',H')$ 
is acyclic whenever $G$ is the projective 
cover of a finite simple group, and $H$ is finite}. 
We may further assume $H$ \emph{simple}.

Consider the Serre subcategory $\cI$ of $\cF$,
and recall that $\cF/\cI \cong \cE$.
By Remark \ref{rem:indproj},
the indecomposable projective objects of 
$\Pro(\cF)$ are exactly those of $\Pro(\cI)$ and the
universal pro-infinitesimal covers $\tP$, where 
$P \in \Pro(\cE)$ is indecomposable and projective.
Also, the universal pro-infinitesimal cover commutes 
with base change under separable algebraic field 
extensions by Remark \ref{rem:separable}.
As a consequence, we obtain
\[ \Hom_{\Pro(\cF')}(\tP', H') \cong
\Hom_{\Pro(\cE')}(P', Q(H')), \]
where $Q := Q^{\cF', \cI'}$.

To show that the above $\Gamma$-module is acyclic,
we may assume $H \in \cE$. We now adapt the
argument in the proof of \cite[Lem.~3.10]{Br17b}, by using
results of Galois cohomology from \cite[Chap.~II]{Se97}. 
Consider the Galois groups $\Gamma_k := \Gal(k_s/k)$ 
and $\Gamma_{k'} := \Gal(k_s/k')$;
these fit in an exact sequence
\[ 1 \longrightarrow \Gamma_{k'} \longrightarrow
\Gamma_k \longrightarrow \Gamma 
\longrightarrow 1. \]
By \cite[II.5.1.7]{DG}, $\cE$ is equivalent to 
the category $\Gamma_k-\mod$ of finite commutative groups
equipped with a discrete action of $\Gamma_k$. The 
latter category has a duality given by 
$M \mapsto \Hom(M,\bQ/\bZ)$, where the right-hand
side denotes the group homomorphisms on which 
$\Gamma_k$ acts via its given action on $M$ and
the trivial action on $\bQ/\bZ$. This yields an
anti-equivalence between $\cE$ and $\Gamma_k-\mod$,
which extends uniquely to an anti-equivalence
between $\Pro(\cE)$ and the category $\Gamma_k-\Mod$ 
of all discrete $\Gamma_k$-modules (the latter is the 
ind-category of $\Gamma_k-\mod$).
Under this anti-equivalence, the base change functor 
$\otimes_k \, k' : \Pro(\cE) \to \Pro(\cE')$
corresponds to the restriction from $\Gamma_k$
to $\Gamma_{k'}$. So it suffices to check that 
$\Hom^{\Gamma_{k'}}(M,N)$ is 
$\Gamma$-acyclic for any object $M$ of 
$\Gamma_k-\mod$ and any injective object $N$ 
of $\Gamma_k-\Mod$. 

We have an injective morphism of discrete 
$\Gamma_k$-modules
\[ \iota : N \longrightarrow
\Hom_{\cont}(\Gamma_k,N), \quad 
x \longmapsto (\gamma \longmapsto \gamma x),\]
where the right-hand side denotes
the group of continuous maps $\Gamma_k \to N$,
equipped with the action $\Gamma_k$ via right 
multiplication on itself. Since the $\Gamma_k$-module 
$N$ is injective, it is identified with a summand of 
$\Hom_{\cont}(\Gamma_k,N)$ via $\iota$; thus, 
the $\Gamma$-module $\Hom^{\Gamma_{k'}}(M,N)$
is a summand of
\[ \Hom^{\Gamma_{k'}}
(M, \Hom_{\cont}(\Gamma_k,N)) \cong 
\Hom^{\Gamma_{k'}}_{\cont}
(M \times \Gamma_k, N) \]
\[ \cong \Hom^{\Gamma_{k'}}_{\cont}
(\Gamma_k, \Hom(M,N)). \]
So it suffices in turn to show that the latter $\Gamma$-module
is acyclic. But since $P := \Hom(M,N)$ is a
discrete $\Gamma_k$-module, we have an isomorphism
\[ \Hom^{\Gamma_{k'}}_{\cont}(\Gamma_k,P) 
\stackrel{\cong}{\longrightarrow} \Hom(\Gamma,P) \]
that sends $f$ to the $\Gamma_{k'}$-invariant map
\[ \Gamma_k \longrightarrow P, \quad 
g \longmapsto g^{-1} f(g). \]
The inverse isomorphism sends 
$\varphi : \Gamma \to P$ to the map 
\[ \Gamma_k \longrightarrow P, \quad
g \longmapsto g \varphi(\bar{g}), \]
where $\bar{g}$ denotes the image of $g$ in
$\Gamma_k/\Gamma_{k'} = \Gamma_k$.
Moreover, $\Hom(\Gamma,P)$ is an acyclic 
$\Gamma$-module as desired. 

Thus, we may assume $G \in \Pro(\cI)$. Consider
the Serre subcategory $\cI_m$ of $\cI$; then
$\cI/\cI_m \cong \cI_u$. Using again 
Remarks \ref{rem:indproj} and \ref{rem:separable},
we are reduced to showing the above acyclicity
assertion, with $\cF$ replaced by $\cI_m$ or $\cI_u$.

By Cartier duality, $\cI_m$ is anti-equivalent to 
$\cE_p$ (see \cite[IV.1.3]{DG}); moreover, $\cE_p$
is self-dual via $\Hom(-, \bQ_p/\bZ_p)$. So the desired
assertion for $\cI_u$ follows from that for $\cE_p$.

Finally, if $G \in \Pro(\cI_u)$, then $G$ is the projective 
cover (in $\Pro(\cI_u)$ or equivalently in $\Pro(\cI)$, 
$\Pro(\cF)$, $\Pro(\cL)$, $\Pro(\cC)$) of the unique 
simple object, $\alpha_p$. Thus, $G'$ is the projective 
cover of $\alpha'_p$ in view of Lemma \ref{lem:cov}
below. Also, $H = \alpha_p$ and hence
$H' = \alpha'_p$. Using Lemma
\ref{lem:top}, it follows that 
\[ \Hom_{\Pro(\cI'_u)}(G', H') =
\End_{\cI'_u}(\alpha'_p) = k'. \]
Since the $\Gamma$-module $k'$ is acyclic, 
this completes the proof of Proposition \ref{prop:acyclic}, 
and hence of Theorem \ref{thm:milne}.

\begin{lemma}\label{lem:cov}
Let $G$ be the projective cover of $\alpha_p$, 
and $k'/k$ a finite separable field extension. 
Then $G'$ is the projective cover of $\alpha'_p$.
\end{lemma}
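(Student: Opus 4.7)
The plan is to show directly that $G' := G \otimes_k k'$ is projective in $\Pro(\cC')$ and that the base change $f : G' \to \alpha'_p$ of the essential epimorphism $G \to \alpha_p$ is itself essential, which identifies $G'$ as the projective cover of $\alpha'_p$.

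By Lemma \ref{lem:change}, $G'$ is projective in $\Pro(\cC')$. Writing $G = \lim_{\leftarrow} G_i$ with $G_i \in \cI_u$, we get $G' = \lim_{\leftarrow} (G_i)_{k'} \in \Pro(\cI'_u)$, and since $\otimes_k k'$ is exact, $f$ is an epimorphism onto $\alpha'_p$.

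The heart of the proof is the computation
\[ \Hom_{\Pro(\cC')}(G', \alpha'_p) \cong k' \]
as a module over $\End_{\cC'}(\alpha'_p) = k'$ acting by post-composition. I would obtain this using the adjunction between base change and Weil restriction $\R_{k'/k}$ along the finite separable extension $k'/k$, applied level-wise in the system $G = \lim_{\leftarrow} G_i$:
\[ \Hom_{\Pro(\cC')}(G', \alpha'_p) \cong \lim_{\to} \Hom_{\cC}(G_i, \R_{k'/k}(\alpha'_p)) \cong \Hom_{\Pro(\cC)}(G, \R_{k'/k}(\alpha'_p)). \]
The $k$-group scheme $\R_{k'/k}(\alpha'_p)$ is infinitesimal unipotent and becomes $\alpha_p^{[k':k]}$ over an algebraic closure (since $k'/k$ is separable); hence it has length $[k':k]$ in $\cI_u$ with every composition factor isomorphic to $\alpha_p$. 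Since $G$ is projective and $\Hom_{\Pro(\cC)}(G, \alpha_p) \cong \End_\cC(\alpha_p) = k$ by Lemma \ref{lem:top}, dévissage along a composition series yields $\dim_k \Hom_{\Pro(\cC)}(G, \R_{k'/k}(\alpha'_p)) = [k':k]$. The $\Hom$-group on the left is therefore a $k'$-module of $k$-dimension $[k':k]$, hence of rank one over $k'$.

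Essentiality then follows formally: any maximal proper subobject of $G' \in \Pro(\cI'_u)$ is the kernel of a nonzero morphism to a simple object of $\cI'_u$, which must be $\alpha'_p$. By the rank-one computation, any two nonzero morphisms $G' \to \alpha'_p$ differ by a unit of $k'$ and so share the same kernel $\Ker(f)$. Thus $G'$ has a unique maximal proper subobject, $f$ is essential, and $G'$ is the projective cover of $\alpha'_p$. The main obstacle is the $\Hom$-computation, which hinges on the right-adjointness of Weil restriction along finite separable extensions together with the identification of the composition factors of $\R_{k'/k}(\alpha'_p)$ in $\cI_u$.
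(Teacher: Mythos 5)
Your proof is correct, and its heart coincides with the paper's: both reduce the statement to the computation $\Hom_{\Pro(\cC')}(G',\alpha'_p)\cong k'$, obtained via the adjunction with the Weil restriction $\R_{k'/k}$, the fact that $\R_{k'/k}(\alpha'_p)$ is an iterated extension of $d=[k':k]$ copies of $\alpha_p$, and Lemma \ref{lem:top}. Where you diverge is in the endgame. The paper first invokes \cite[V.2.4.6 b)]{DG} to write the projective object $G'$ of $\Pro(\cI'_u)$ as a direct product of copies of the projective cover $P'$ of the unique simple object $\alpha'_p$, and then uses the dimension count (there only the inequality $\dim_k \leq d$, together with injectivity of $k'\to\Hom_{\Pro(\cC')}(G',\alpha'_p)$) to force the product to have a single factor. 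You instead deduce directly from the rank-one statement that all nonzero morphisms $G'\to\alpha'_p$ have the same kernel, hence that $G'$ has a unique maximal proper subobject and the base-changed epimorphism is essential. This is a legitimate and slightly more self-contained route, since it avoids the structure theorem for projectives in pro-artinian categories; the one step you leave tacit is that every proper subobject of $G'$ is contained in a maximal one, which does hold here (a nonzero object of $\Pro(\cI'_u)$ has a nonzero artinian, hence simple, quotient), and is needed to pass from ``unique maximal proper subobject'' to ``essential''. The trade-off is that you need the exact count $\dim_k = d$ (using exactness of $\Hom_{\Pro(\cC)}(G,-)$ and the full strength of the d\'evissage), whereas the paper gets by with the upper bound.
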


\begin{proof}
By Lemma \ref{lem:change}, $G'$ is projective in 
$\Pro(\cC')$. Also, $G \in \Pro(\cI_u)$ and hence
$G' \in \Pro(\cI_u')$ is projective there. Recall that
$\cI'_u$ has a unique simple object $\alpha'_p$,
and denote by $P'$ its projective cover. Then $G'$ 
is a direct product of copies of $P'$ in view of
\cite[V.2.4.6 b)]{DG}. Also, the natural map
\[ k' = \End_{\cC'}(\alpha'_p) \longrightarrow
\Hom_{\Pro(\cC')}(P', \alpha'_p) \]
is an isomorphism by Lemma \ref{lem:top}. So it suffices
to show that the analogous map 
\[ \varphi : k'  \longrightarrow
\Hom_{\Pro(\cC')}(G', \alpha'_p) \]
is an isomorphism as well. We have
\[ \Hom_{\Pro(\cC')}(G', \alpha'_p) =
\Hom_{\Pro(\cC)}(G, \R_{k'/k}(\alpha'_p)), \]
where the Weil restriction $\R_{k'/k}(\alpha'_p)$
is an iterated extension of $d := [k':k]$ copies of 
$\alpha_p$. Using Lemma \ref{lem:top} again, 
it follows that $\Hom_{\Pro(\cC')}(G', \alpha'_p)$
has dimension at most $d$ when viewed as a 
$k$-vector space. Since $\varphi$ is injective and
$k$-linear, and $k'$ has dimension $d$ when viewed
as a $k$-vector space, we conclude that $\varphi$
is an isomorphism. 
\end{proof}

\medskip

\noindent
{\bf Acknowledgments.} Many thanks to Cyril Demarche, 
Mathieu Florence, Roy Joshua, Bruno Kahn, Chu Gia Vuong 
Nguyen and Takeshi Saito for very helpful discussions on 
the topics of this paper. Special thanks to the referee
for a careful reading of the paper and valuable comments.

\bibliographystyle{amsalpha}

\end{document}